\newcommand{\arxiv}[1]{\href{http://www.arXiv.org/abs/#1}{arXiv:#1}}
\newtheorem{theorem}{Theorem}[section]
\newtheorem{lemma}[theorem]{Lemma}
\newtheorem{proposition}[theorem]{Proposition}
\newtheorem{corollary}[theorem]{Corollary}
\theoremstyle{definition}
\newtheorem{definition}[theorem]{Definition}
\newtheorem{example}[theorem]{Example}
\theoremstyle{remark}
\newtheorem{remark}[theorem]{Remark}
\numberwithin{equation}{section}
\DeclareMathOperator{\ind}{ind}
\def\Nat{{\mathbb N}}
\def\wiel{\operatorname{Wi}}
\def\bzero{{\mathbf 0}}
\def\bunity{{\mathbf 1}}
\def\0{{\mathbf 0}}
\def\1{{\mathbf 1}}
\def\R{{\mathds R}}
\def\Rmax{\R_{\max}}
\def\Rp{\Rmax}
\def\Rpnn{\Rp^{n\times n}}
\def\digr{{\mathcal D}}
\def\crit{{\mathcal G}^c}
\def\subcrit{{\mathcal G}}
\def\mN{\mathcal{N}}
\def\hagr{{\mathcal G}^{ha}}
\def\ctgr{{\mathcal G}^{ct}}
\def\thrct{{\mathcal T}^{ct}}
\def\thrha{{\mathcal T}^{ha}}
\def\cycle{{Z}}
\newcommand{\walkslen}[3]{\mathcal{W}^{#3}(#1\to #2)}
\newcommand{\walks}[2]{\mathcal{W}(#1\to #2)}
\newcommand{\walksi}[1]{\mathcal{W}(#1\to)}
\newcommand{\walksilen}[2]{\mathcal{W}^{#2}(#1\to)}
\newcommand{\walkslennode}[4]{\mathcal{W}^{#3}(#1\xrightarrow{#4} #2)}
\newcommand{\walksnode}[3]{\mathcal{W}(#1\xrightarrow{#3} #2)}
\newcommand{\old}[1]{}
\def\circumf{\operatorname{cr}}
\def\cabdrive{\operatorname{cd}}
\def\ep{\operatorname{ep}}
\def\bnacht{B_{\operatorname{N}}}
\def\bharg{B_{\operatorname{HA}}}
\def\bct{B_{\operatorname{CT}}}
\def\g{\operatorname{g}}
\def\sqcup{\cup}
\def\bigsqcup{\bigcup}
\begin{document}

\title{Weak CSR Expansions and Transience Bounds in Max-Plus Algebra}


\author{Glenn Merlet}
\address{Glenn Merlet, Universit\'e d'Aix-Marseille, CNRS, IML, 13288 Marseille, France}
\email{glenn.merlet@gmail.com}

\author{Thomas Nowak}
\address{Thomas Nowak, \'{E}cole Normale Sup\'{e}rieure, F-75230 Paris Cedex 05, France}
\email{nowak@lix.polytechnique.fr}

\author{Serge{\u{\i}} Sergeev}
\address{Serge{\u\i} Sergeev, University of Birmingham, School of Mathematics,
Watson Building, Edgbaston B15 2TT, UK, and Moscow Centre for Continuous Mathematical Education, Russia.}
\email{sergiej@gmail.com}
\thanks{Th. Nowak was at LIX \'{E}cole Polytechnique and S. Sergeev 
was with the Max-Plus Team at INRIA and CMAP \'Ecole Polytechnique when this work was initiated.
The last author is supported by EPSRC grant RRAH15735, RFBR-CNRS grant 11-0193106 and
RFBR grant 12-01-00886.}



\subjclass[2010]{15A80, 15A23, 90B35}

\keywords{Digraphs, Max-plus, Boolean matrices, Transient}

\date{}


\begin{abstract}
This paper aims to unify and extend existing techniques for deriving upper bounds on the transient of max-plus
matrix powers. To this aim, we introduce the concept of weak CSR expansions: $A^t=CS^tR\oplus B^t$. We observe that most of
the known bounds (implicitly)
take the maximum of (i) a bound for the weak CSR expansion to hold, which does not depend on the values of the entries of the matrix
but only on its pattern,
and (ii) a bound for the $CS^tR$ term to dominate.

To improve and analyze (i), we consider various cycle replacement techniques and show that some of the known bounds for indices and
exponents of digraphs apply here. We also show how to make use of various parameters of digraphs. To improve and analyze (ii), we
introduce three different kinds of weak CSR expansions (named after Nachtigall, Hartman-Arguelles, and Cycle Threshold).
As a result, we obtain a collection of bounds, in general incomparable to one another, but better than the bounds found in the literature.
\end{abstract}

\maketitle

\section{Introduction}

Max-plus algebra is a version of linear algebra developed over the max-plus semiring, which is the set $\Rmax=\R\cup\{-\infty\}$
equipped with the  multiplication $a \otimes b = a+b$ and the addition
$a\oplus b=\max(a,b)$. This semiring has zero $\bzero:=-\infty$ (neutral
with respect to $\oplus$) and unity $\bunity:=0$ (neutral with respect to
$\otimes$), and each element $\mu$ except for $\bzero$ has an inverse
$\mu^-:=-\mu$ satisfying $\mu\otimes \mu^-=\mu^-\otimes\mu=\bunity$. Taking powers
of scalars in $\Rmax$ means ordinary multiplication: $\lambda^{\otimes t}:=t\cdot\lambda$.

The max-plus arithmetic is extended to
matrices in the usual way, so that
$(A B)_{ij}=\bigoplus_k a_{ik}\otimes b_{kj}=\max_k (a_{ik}+ b_{kj})$ for matrices $A=(a_{ij})$ and $B=(b_{ij})$ of compatible sizes.
In this paper, all matrix multiplications are to be understood in the max-plus
sense.
For multiplication by a scalar and for taking powers of scalars we will
write the sign $\otimes$ explicitly, while for the matrix multiplication it will be always
omitted.

Historically, max-plus algebra first appeared
to analyze production systems driven by the dynamics
\begin{equation}
\label{e:dynam}
x_i(k+1)=\max_j \big( x_j(k) + a_{ij} \big) \enspace.
\end{equation}
Thus, repeated application of matrix $A=(a_{ij})$ in max-plus algebra to an
initial vector $x(0)$ computes the vectors $x(k)$.
Here $x(k)$ is typically a vector consisting of $n$ real components, expressing the times of
certain events happening during the $k$th production cycle.
According to dynamics~\eqref{e:dynam},
event~$i$ has to wait until all the preceding events~$j$ happen and the
necessary time delays~$a_{ij}$
have passed, so that event~$i$ can then occur as early as possible. Such
situation is usual
in train scheduling, working plan analysis, and synchronization of multiprocessor systems~\cite{BCOQ,But:10,HOW:05}.
Recently, Charron-Bost et al.~\cite{CBFWW11} have shown that also the behavior
of link reversal algorithms used for routing, scheduling, resource allocation,
leader election, and distributed queuing can be described by
a recursion of the form~\eqref{e:dynam}.

In this paper, we investigate the sequence of max-plus matrix powers $A^{t}
=\overbrace{AAA\cdots A}^{t\text{ times}}$.
Cohen et al.~\cite{CDQV-83} proved that this sequence eventually exhibits a periodic regime whenever~$A$ is
{\em irreducible}, i.e., whenever the digraph $\digr(A)$ described by~$A$ is strongly connected: there exists a positive integer~$\gamma$
and a nonnegative integer~$T$ such that
\begin{equation}\label{e:period}
\forall t\geq T\ :\quad A^{t+\gamma} = \lambda^{\otimes \gamma}\otimes A^t\,
\end{equation}
where~$\lambda=\lambda(A)$ is the unique max-plus eigenvalue of~$A$.
The smallest~$T$ that can be chosen in~\eqref{e:period} is called the {\em
transient} of~$A$; we denote it by~$T(A)$.

Since it satisfies $x(t)=A^tv$ every max-plus linear dynamical system, i.e., every sequence~$x(t)$ satisfying~\eqref{e:dynam} is periodic in the same sense whenever~$A$ is
irreducible.
Its transient~$T(A,v)$ in general depends on~$v$ and is always upper-bounded by~$T(A)$.

Bounds on the transients were obtained by Hartmann and Arguelles~\cite{HA-99}, Bouillard and Gaujal~\cite{BG-00},
Soto y Koelemeijer~\cite{SyK:03}, Akian et al.~\cite{AGW-05}, and Charron-Bost
et al.~\cite{CBFN-12}. Those bounds are incomparable because they depend on
different parameters of~$A$ or assume different hypotheses.
However they all appear, at least in the proofs,
as the maximum of a first bound independent of the values of the entries of~$A$
and a second bound taking those values into account.
The first motivation for this paper was to find a common ground for these bounds in order to
understand, unify, combine, and improve them.

Schneider~\cite{HS:PC} observed that the
Cyclicity Theorem can be written in the
form of a {\em CSR expansion}, which was formulated by Sergeev~\cite{Ser-09}:
there exists a nonnegative integer~$T$ such that
\begin{equation}
\label{csr}
\forall t\geq T \ :\quad A^t=\lambda^{\otimes t}\otimes CS^tR\enspace,
\end{equation}
where the matrices~$C$, $S$, and~$R$ are defined in terms of~$A$ and fulfill
$CS^{t+\gamma}R=CS^tR$ for all~$t\geq 0$.
In an earlier work, considering infinite-dimensional matrices, Akian, Gaubert and Walsh~\cite[Section 7]{AGW-05}
gave a similar formulation
originating in the preprints of Cohen~et~al.~\cite{CDQV-83}.

Because of the periodicity of the sequence $CS^tR$, the smallest~$T$ satisfying~\eqref{csr} is~$T(A)$.

Later, Sergeev and Schneider~\cite{SS-11} proved that for $t$ large enough, $A^t$ is the 
sum (in the max-plus sense) of terms of the form~$\lambda_i^{\otimes t}\otimes C_iS_i^tR_i$.
This sum, which we call {\em CSR decomposition}, has two remarkable properties:
it holds for reducible matrices as well as irreducible ones,
and the CSR decomposition holds for~$t\ge 3n^2$, a bound that does not depend on the values of the entries of~$A$.

As a common ground of transience bounds and $CSR$ decomposition, we propose the new concept of {\em weak CSR expansions}. We
suggest that all existing techniques
for deriving transience bounds implicitly use the idea that eventually we have
\begin{equation}
\label{weak-exp}
\forall t\geq T\ :\quad A^t=\big(\lambda^{\otimes t}\otimes
CS^tR\big)\oplus B^t
\enspace,
\end{equation}
where $C$, $S$, and $R$ are defined as in the CSR expansion, and~$B$
is obtained from~$A$ by setting several entries (typically, all
entries in several rows and columns) to~$\bzero$. In this case, we
say that~$B$ is {\em subordinate\/} to~$A$. Call the smallest~$T$
for which~\eqref{weak-exp} holds the {\em weak CSR threshold\/}
of~$A$ with respect to~$B$ and denote it by~$T_1(A,B)$.

This quantity heavily depends on the choice of~$B$, i.e., on which
entries are set to~$\bzero$.
If we choose~$B=(\bzero)$, then we recover the ordinary CSR
expansion and we have $T_1(A,B)=T(A)$.
If $\digr(B)$ is acyclic, then $B^n=(\bzero)$ and $T(A)\le\max(T_1(A,B),n)$.
More generally $T(A)\le\max(T_1(A,B),T_2(A,B))$, where $T_2(A,B)$ is the least integer satisfying
\begin{equation*}
\forall t\geq T\ :\quad \lambda^{\otimes t}\otimes\big(CS^tR\big)\ge B^t
\enspace.
\end{equation*}
Analogously, we call~$T_2(A,B,v)$ the least integer satisfying
\begin{equation*}
\forall t\geq T\ :\quad \lambda^{\otimes t}\otimes\big(CS^tRv\big)\ge B^tv
\enspace.
\end{equation*}

We claim that the bounds in~\cite{BG-00,CBFN-12,HA-99,SyK:03} implicitly are of this type, for various choices of~$B$ and various ways to bound
$T_1$ and~$T_2$.

We next summarize the contents of the remaining part of this paper.
In Section~\ref{s:prel}, we recall notions and results of max algebra, focusing on its relation to weighted digraphs.
In Section~\ref{s:csr}, we introduce three schemes of defining~$B$, and thereby weak CSR
expansions: the Nachtigall scheme, the Hartmann-Arguelles scheme, and the cycle threshold
scheme.
The first scheme is implicitly used in~\cite{AGW-05,BG-00,CBFN-12,SyK:03}, the second one is derived from~\cite{HA-99}, 
and the third one is completely new.

In Section~\ref{s:statements} we state some bounds on~$T_1(A,B)$ and~$T_2(A,B)$, thus on~$T$ that we obtain in this paper.
Those bounds strictly improve the ones in~\cite{BG-00,CBFN-12,HA-99,SyK:03}.
Moreover they can be combined in several ways.
Notably, for the three schemes defined in Section~\ref{s:csr}, we bound the weak CSR threshold~$T_1(A,B)$ by the
{\em Wielandt number\/}
\begin{equation}\label{e:wielandt}
\wiel(n)=\begin{cases} 0 & \text{if } n=1\\(n-1)^2+1 & \text{if }n>1\end{cases}
\end{equation}
(named in honor of~\cite{Wie-50}).
The bound $\wiel(n)$ is optimal because it is the worst case transient of
powers of Boolean matrices,
i.e., matrices with entries~$\bzero$ and~$\bunity$
(see
Remarks~\ref{r:Boolean} and~\ref{r:Boolean2}).
We also recover another optimal bound for Boolean matrices due to Dulmage and
Mendelsohn~\cite{DM-64} that do not only depend on~$n$ but also on some graph parameter.
The section also includes a examples to compare the different bounds.

In Section~\ref{s:comparison}, we compare our results to some bounds found in the literature.

In Section~\ref{s:ProofS}, we explain the strategy of the proof, which leads us to introduce a graph theoretic quantity,
which we name {\em cycle removal threshold} of a graph and state bounds on~$T_1(A,B)$ that depend on this quantity for some graphs.

In Sections~\ref{s:CsrtoWalk} and~\ref{s:TcrToT1}, we prove the results stated in Section~\ref{s:ProofS} to bound~$T_1(A,B)$ in terms
of the cycle removal threshold.

In Section~\ref{s:Tcr} we bound the cycle removal threshold.
First we recall the bounds of~\cite{CBFN-11} that depend on several parameters of~$\digr(A)$ and use the ideas of
Hartman and Arguelles~\cite{HA-99}
to give a new bound depending on less parameters.
Then, we introduce a new technique leading to other two bounds on~$T_1(A,B)$.

In Section~\ref{s:TcrToT2} we prove the bounds on~$T_2(A,B)$.

In Section~\ref{s:Ep}, we recall some bounds on the index of Boolean matrices to be used in some bounds on~$T_1$.

The technique of local reduction, originating from Akian, Gaubert and Walsh~\cite[Section 7]{AGW-05},
is recalled in Section~\ref{s:localred}. We show that this
technique can be combined with any of the CSR schemes described in Section~\ref{s:csr}.

\section{Preliminaries}\label{s:prel}


\subsection{Walks in weighted digraphs}

Let us recall the optimal walk interpretation of matrix powers in max algebra.
This is the fact that the entries of a matrix power~$A^t$ are equal to maximum
weights of walks of length~$t$ in the digraph associated to matrix~$A$.

To a square matrix $A=(a_{ij})\in\Rp^{n\times n}$ we associate an edge-weighted
digraph $\digr(A)$ with set of nodes $N=\{1,2,\dots,n\}$ and set of
edges~$E\subseteq N\times N$ containing a pair~$(i,j)$ if and only
if~$a_{ij}\neq\bzero$; the weight of an edge~$(i,j)\in E$ is defined to
be~$p(i,j)=a_{ij}$.
A {\em walk} $W$ in $\digr(A)$ is a finite sequence $(i_0,i_1,\ldots i_L)$ of adjacent nodes of~$\digr(A)$.
We define its length $l(W)=L$ and weight $p(W)=a_{i_0,i_1} \otimes a_{i_1,i_2} \cdots\otimes a_{i_{t-1},i_t}$.
A closed walk is a walk whose start node~$i_0$ coincides with its end node~$i_L$. Closed walks are often called circuits in the literature.
There exists an empty closed walk at every node of length~$0$ and
weight~$0$.

The multiplicity of an edge~$e$ in~$W$ is the number of~$k$'s such that $(i_k,i_{k+1})=e$.
A {\em subwalk \/} of walk~$W$ is a walk~$V$ such that the edges of~$V$ appear in~$W$ with larger multiplicity.
A subwalk of~$W$ is a {\em proper\/} subwalk if it is not equal to~$W$.

A closed walk is a {\em cycle\/} if it does not contain any nonempty closed walk as a proper subwalk.
A walk is a  {\em path\/} if it does not contain a nonempty cycle as a subwalk.

An elementary result of graph theory states that a walk can always be split into a path and some cycles.
Reciprocally, union of edges of one path and some cycles can always be reordered into a walk provided the graph with all the edges and nodes of those walks is connected.
The best way to see this is in terms of multigraph~$M(W)$ defined by a
walk~$W$.

For a set~$\mathcal{W}$ of walks, we write~$p(\mathcal{W})$ for the supremum of walk weights
in~$\mathcal{W}$.
Denote by~$\walkslen{i}{j}{t}$ the set of all walks from~$i$ to~$j$ of length~$t$
and write $A^t = (a_{ij}^{(t)})$.
It is immediate from the definitions that
\begin{equation}\label{e:walksense1}
a_{ij}^{(t)} = p\left( \walkslen{i}{j}{t} \right) \enspace.
\end{equation}

When we do not want to restrict the lengths of walks, we define the set~$\walks{i}{j}$ of all walks connecting~$i$ to~$j$.
An analog of $(I-A)^{-1}$ in max-plus algebra is the {\em Kleene star\/}
\begin{equation}\label{klsdef}
A^*=I\oplus A\oplus A^2\oplus A^3 \oplus \ldots \enspace,
\end{equation}
where~$I$ is the max-plus identity matrix.
It follows from the optimal walk interpretation~\eqref{e:walksense1} that series~\eqref{klsdef} converges if and only if $p(\cycle)\leq0$ for all closed walks~$\cycle$ in~$\digr(A)$, in which case it can be truncated as $A^*=I\oplus A\oplus\ldots\oplus A^{n-1}$.
If we denote $A^* = (a_{ij}^*)$, it is again immediate that
\begin{equation}
\label{e:walksense2}
a_{ij}^* =  p\big(\walks{i}{j} \big) \enspace.
\end{equation}

The {\em maximum cycle mean\/} of~$A\in\Rpnn$ is defined by
\begin{equation}\label{mcgm}
\begin{split}
\lambda(A)&=\max\{ p(\cycle)^{\otimes1/l(\cycle)} \mid \cycle \text{ is a nonempty
cycle in }\digr(A)\}
\enspace.
\end{split}
\end{equation}
Because every closed walk is composed of cycles, we could replace
``cycle'' by ``closed walk'' in definition~\eqref{mcgm}. The maximum
cycle mean~$\lambda(A)$ is equal to the greatest max-algebraic
eigenvalue of~$A$, i.e., a~$\mu\in\Rp$ such that there exists a
nonzero vector~$x$ satisfying $A\otimes x=\mu\otimes x$. Nonempty
closed walks of weight $\lambda(A)$ are called {\em critical}, and
so are the nodes and edges on these wlks. The subgraph
of~$\digr(A)$ consisting of the set of critical nodes~$N_c$ and the
set of critical edges~$E_c$ is called the {\em critical graph\/} of
$A$ and is denoted by $\crit(A)=(N_c,E_c)$. A useful fact (used
throughout the paper) is that every nonempty closed walk
in~$\crit(A)$ is critical.


As we will see, the behavior of max-algebraic matrix powers is eventually
dominated by the
walks that visit the critical graph.
The set of such walks in $\walkslen{i}{j}{t}$
will be denoted by $\walkslennode{i}{j}{t}{\crit}$
More generally, for a node~$k$ and a subgraph $\digr$ of~$\digr(A)$ we write
$$\walkslennode{i}{j}{t}{k}=\bigcup_{t_1+t_2=t} \left\{W_1W_2 |W_1\in\walkslen{i}{k}{t_1}, W_2\in\walkslen{k}{j}{t_2} \right\},$$
$$\quad \walkslennode{i}{j}{t}{\digr}=\bigcup_{k\in\digr}
\walkslennode{i}{j}{t}{k} \quad \textnormal{and} \quad \walksnode{i}{j}{\digr} =
\bigcup_{t\geq0} \walkslennode{i}{j}{t}{\digr}\enspace.$$

\old{
We will also consider orbits $\big(A^t v\big)_{t\geq0}$ starting at
some initial vector~$v\in\Rp^n$, it will be convenient to modify the weights of
walks, multiplying them by the component of $v$ corresponding to the walk's end
node. Thus for a walk~$W$ in~$\digr(A)$ ending in node~$j$, we define $p_v(W)=
p(W)\otimes v_{j}$. For a set~$\mathcal{W}$ of walks, we will likewise write $p_v(\mathcal{W})$
for the maximum~$p_v(W)$ where~$W\in\mathcal{W}$. Here we are interested in the set of
all walks of length $t$ starting at a node $i$.  The set of such walks will be
denoted by $\walksilen{i}{t}$, or just $\walksi{i}$ if their length not
restricted.  It follows directly from the definitions that
\begin{equation}
\label{e:v-walks}
(A^t v)_i = p_v\big( \walksilen{i}{t} \big)\quad\text{and}\quad
(A^* v)_i = p_v\big( \walksi{i} \big)\enspace.
\end{equation}}

\subsection{Cyclicity of digraphs}
A digraph $\subcrit=(N,E)$ is {\em strongly connected\/} if there exists a walk
from~$i$
to~$j$ for all nodes $i,j\in N$.
A {\em strongly connected component (s.c.c.)\/} of~$\subcrit$ is a maximal strongly connected subgraph
of~$\subcrit$.
Digraph~$\subcrit$ is called {\em completely reducible} if there are no edges between
distinct s.c.c.'s of~$\subcrit$.
 The critical graph $\crit(A)$ will be the most important example of this.

Matrix $A$ is called {\em irreducible\/} if its associated digraph
is strongly connected, and {\em reducible\/} otherwise. Further, it
is called completely reducible if so is the associated digraph.

The {\em cyclicity}~$\gamma(\subcrit)$ of a strongly connected digraph~$\subcrit$ is the greatest common divisor of the lengths of its closed walks.
If~$\subcrit$ is not strongly connected, its cyclicity~$\gamma(\subcrit)$ is
the least common multiple of the cyclicities of its s.c.c.'s.
It is well-known that any two lengths of walks on~$\subcrit$ both starting at some node~$i$ and both ending at some node~$j$
are congruent modulo~$\gamma(\subcrit)$.
Moreover, if~$\subcrit$ is strongly connected, there is walk from~$i$ to~$j$
of all lengths that are large enough and that are congruent to some~$t_{ij}$ modulo~$\gamma(\subcrit)$.

We call a subgraph~$\subcrit$ of~$\crit(A)$ a {\em representing subgraph\/} if~$\subcrit$ is completely reducible and every s.c.c.\ of~$\crit(A)$ contains exactly one s.c.c.\ of~$\subcrit$.
The cyclicity~$\gamma(\subcrit)$ of a representing subgraph of~$\crit(A)$ is always a multiple of~$\gamma\big(\crit(A)\big)$.
Hence Equation~\eqref{e:period} also holds with~$\gamma=\gamma(\subcrit)$ instead of~$\gamma\big(\crit(A)\big)$.

\old{
Because of the walk interpretation~\eqref{e:walksense1} below, the sequence of
digraphs~$\digr(A^t)$ becomes periodic whenever~$A$ is irreducible, i.e.,
$\digr(A)$ is strongly connected.
Its minimal period is the cyclicity of~$\digr(A)$.

The first bound on the index of a digraph was given by Wielandt:
\begin{equation*}
\ind(A) \leq \wiel(n) = n^2 - 2n + 2
\end{equation*}
where $\ind(A)$ denotes the index of~$\digr(A)$.
It is optimal is the sense that there is a matrix of size~$n$ with index~$\wiel(n)$ for any~$n$.}

\subsection{Visualization and max-balancing}\label{ss:vis}

The maximum cycle mean $\lambda(A)$ also appears as the least $\mu\in\Rp$ such that there
exists a finite vector~$x$ satisfying
$Ax\leq\mu\otimes x$. When $\mu=\lambda(A)$, we can take
$x_i=\bigoplus_{j=1}^n (\lambda^-(A)\otimes A)^*_{ij}$, that is, the
component-wise maximum of all columns of $(\lambda^-(A)\otimes
A)^*$. Setting $D$, resp.\ $D^-$, to be the diagonal matrix with
entries $d_{ii}=x_i$ and $d_{ij}=\bzero$ for $i\neq j$, resp.\
$d^-_{ii}:=x_i^-$ and $d_{ij}=\bzero$ for $i\neq j$, we obtain for
$B=D^-(\lambda^-\otimes A)D$ that $\crit(B)=\crit(A)$ and
\begin{equation}
\label{visdef}
\begin{split}
& b_{ij}\leq \bunity\quad\text{for all $i,j$}\enspace,\\
& b_{ij}=\bunity\quad\text{for all } i,j \text{ in }\crit(B) \enspace.
\end{split}
\end{equation}
When~\eqref{visdef} holds we say that $B$ is {\em visualized}: it
exhibits the edges of the critical graph. A diagonal matrix~$X$ such
that $B=D^-(\lambda^-(A)\otimes A)D$ is visualized and
$\crit(B)=\crit(A)$ is also called a Fiedler-Pt\'ak
scaling~\cite{FP-67} of~$A$. In this case, we call~$B$ a {\em
visualization\/} of~$A$.

Fiedler-Pt\'ak scalings were described in more detail by Sergeev,
Schneider and Butkovi\v{c}~\cite{SSB} using Kleene stars and max
algebra. Butkovi{\v c} and Schneider~\cite{BS-05} described
applications to various kinds of nonnegative similarity scalings. A
Fiedler-Pt\'ak scaling, particularly interesting to us, is called
the {\em max-balancing}. It was described by Schneider and
Schneider~\cite{SS-91}:

\begin{theorem}[Schneider and Schneider~\cite{SS-91}]
\label{schsch} For all $A\in\Rp^{n\times n}$ exists a
visualization~$B$ of~$A$ satisfying the following equivalent
properties:
\begin{enumerate}
\item (Cycle cover) For all edges $(i,j)$ in~$\digr(B)$ there exists a cycle~$\cycle$ in~$\digr(B)$ containing~$(i,j)$
such that all edges of~$\cycle$ have weight at least~$b_{ij}$.
\item (Max-balancing) For all sets $M\subseteq\{1,\ldots,n\}$, we have:
$\displaystyle\max_{i\in M, j\not\in M} b_{ij} = \max_{i\not\in M, j\in M} b_{ij}.$
\end{enumerate}
\end{theorem}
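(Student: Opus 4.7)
The plan has two steps: prove the equivalence of conditions (1) and (2) for any visualization $B$, and then exhibit a visualization that actually satisfies them.

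For (1) $\Rightarrow$ (2), fix $M \subseteq \{1,\dots,n\}$ and let $(i_0, j_0)$ with $i_0 \in M$, $j_0 \notin M$ attain $m_{\mathrm{out}} := \max_{i \in M,\, j \notin M} b_{ij}$. The cycle through $(i_0, j_0)$ supplied by (1) has every edge of weight at least $m_{\mathrm{out}}$; since it is closed at $i_0 \in M$, it must re-enter $M$ through some edge $(i', j')$ with $i' \notin M$, $j' \in M$, yielding $b_{i' j'} \geq m_{\mathrm{out}}$, and symmetry gives (2). Conversely, for (2) $\Rightarrow$ (1), given an edge $(i,j)$ I let $M$ be the set of nodes reachable from $j$ (including $j$ itself) via edges of weight $\geq b_{ij}$. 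If $i \in M$, concatenating a $j$-to-$i$ path in this level set with $(i,j)$ yields a closed walk through $(i,j)$ all of whose edges weigh at least $b_{ij}$, and it contains a cycle through $(i,j)$ with the same property. If $i \notin M$, then $(i,j)$ is of type $M^c \to M$ of weight $b_{ij}$, so (2) provides an edge of type $M \to M^c$ of weight at least $b_{ij}$; appending this edge to a witnessing path from $j$ extends $M$, a contradiction.

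For existence, I would rescale so that $\lambda(A) = \bunity$ and work in the compact set of visualizations $B = D^{-} A D$, normalized say by $d_{11} = \bunity$. Among these, select one for which the multiset of non-critical entries, sorted in decreasing order, is lexicographically minimal. Suppose (2) failed for some $M$; after swapping $M$ with $M^c$ if needed we may assume $m_{\mathrm{out}} > m_{\mathrm{in}} := \max_{i \notin M,\, j \in M} b_{ij}$. No critical edge crosses $\partial M$ in this case: any critical s.c.c.\ straddling $\partial M$ contains cycles through both sides and hence contributes critical edges in both directions, which would force $m_{\mathrm{out}} = m_{\mathrm{in}} = \bunity$. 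The diagonal perturbation $d_{ii} \mapsto d_{ii} + \epsilon$ for $i \in M$, with $0 < \epsilon < m_{\mathrm{out}} - m_{\mathrm{in}}$, therefore yields another visualization: critical entries and all internal edges are unchanged, every $M \to M^c$ edge drops by $\epsilon$, and every $M^c \to M$ edge rises by $\epsilon$ but stays strictly below $m_{\mathrm{out}}$. This strictly lowers the $m_{\mathrm{out}}$ contribution in the lex sequence without creating any larger value, contradicting minimality; so (2) must hold, and (1) follows by the equivalence.

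The main obstacle will be the delicate case in which some internal edge happens to share the weight $m_{\mathrm{out}}$ with the $M \to M^c$ edges — the perturbation leaves such edges untouched, so a single step need not strictly lower the lex ordering. The cleanest remedy is either to formulate the minimization over a suitable compact set and invoke a limit/compactness argument to show that the extremal visualization already satisfies (2), or to follow the more constructive route of Schneider and Schneider~\cite{SS-91}: recurse on the structure of $\crit(A)$, contracting each of its s.c.c.'s into a super-node, applying the theorem inductively to the quotient matrix, and pulling the resulting diagonal scaling back to $A$.
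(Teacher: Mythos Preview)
The paper does not prove this theorem; it quotes it from Schneider and Schneider~\cite{SS-91} and uses it as a black box, so there is nothing in the paper to compare your argument against.

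Your equivalence $(1)\Leftrightarrow(2)$ is correct. For existence, the obstacle you flag is not one: even when internal edges share the value $m_{\mathrm{out}}$, the perturbation strictly lowers the lex order of the sorted non-critical entries. Indeed, every entry strictly greater than $m_{\mathrm{out}}$ is necessarily internal (the $M\to M^c$ entries are $\le m_{\mathrm{out}}$ and the $M^c\to M$ entries are $\le m_{\mathrm{in}}<m_{\mathrm{out}}$) and hence unchanged. If $N_1$ is the number of internal entries $\ge m_{\mathrm{out}}$, then the first $N_1$ sorted entries agree before and after, the $(N_1{+}1)$st entry before equals $m_{\mathrm{out}}$ (attained by some $M\to M^c$ edge), while after it is strictly smaller, since every $M\to M^c$ edge has dropped to at most $m_{\mathrm{out}}-\epsilon$ and every $M^c\to M$ edge has risen only to at most $m_{\mathrm{in}}+\epsilon<m_{\mathrm{out}}$. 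So the lex order strictly decreases regardless.

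The only genuine gap is the one you name in passing under ``compactness'': the existence of a lex-minimal visualization. For irreducible $A$ (the case used throughout this paper) the normalized scalings $(d_{ii})$ with $d_{11}=\bunity$ that yield a visualization form a nonempty bounded polyhedron, because the inequalities $d_{jj}-d_{ii}\le -a_{ij}$ chained along paths from node~$1$ to~$i$ and back pin each $d_{ii}$ between finite bounds. On this compact set one successively minimizes the sorted coordinates to obtain the lex-minimum. With this detail filled in, your argument is complete and gives a clean alternative to the recursive construction of~\cite{SS-91}.
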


\subsection{CSR expansions, Weak CSR expansions}

For any~$A\in\Rpnn$ and  any subgraph~$\subcrit$ of~$\crit(A)$ with no trivial s.c.c., we
set $M=\left((\lambda(A)^- \otimes A\big)^{\gamma(\subcrit)}\right)^*$ and define the matrices
$C,S,R\in\Rpnn$ by
\begin{equation}
\label{csrdef}
\begin{split}
c_{ij}&=
\begin{cases}
m_{ij} &\text{if $j$ is in $\subcrit$}\\
\bzero &\text{otherwise,}
\end{cases}\quad
r_{ij}=
\begin{cases}
m_{ij} &\text{if $i$ is in $\subcrit$}\\
\bzero &\text{otherwise,}
\end{cases}\\
s_{ij}&=
\begin{cases}
\lambda(A)^- \otimes a_{ij} &\text{if $(i,j)$ is in $\subcrit$}\\
\bzero &\text{otherwise.}
\end{cases}
\end{split}
\end{equation}
When the dependency on~$\subcrit$ needs to be emphasized, we write $C_\subcrit$, $S_\subcrit$ and~$R_\subcrit$ instead of~$C$, $S$ and~$R$.

Essentially $C$ and $R$ can be regarded as sub-matrices of $M$ extracted from the columns,
resp.\ the rows of $M$ with indices in $\subcrit$.
If~$A$ is visualized, then matrix $S$ is exactly the associated
Boolean matrix of $\subcrit$.
The matrices $C$, $S$, and $R$ are called the {\em CSR terms\/} of~$A$ with respect to~$\subcrit$.


The following is a CSR version of the Cyclicity Theorem.

\begin{theorem}[\cite{HS:PC,Ser-09}]\label{schneider}
Let $A\in\Rpnn$ be irreducible and let $C,S,R$ be the CSR terms of~$A$ with respect to~$\crit(A)$. Then for all~$t\geq T(A)$:
\[A^t=\lambda(A)^{\otimes t} \otimes CS^tR \]
\end{theorem}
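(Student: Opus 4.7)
The plan is to reduce to a normalized setting and then verify the identity entry-wise using the walk interpretation of matrix powers. First I would scale the matrix, replacing $A$ by $\lambda(A)^-\otimes A$, which reduces the claim to $A^t = CS^tR$ under the assumption $\lambda(A)=\bunity$; this is legitimate because multiplying $A$ by a scalar multiplies each of $C$, $S$, $R$ by the same scalar to the appropriate power (the Kleene star $M=(\lambda^-\otimes A)^{\gamma}{}^*$ is invariant, and $S$ just absorbs the scaling). Next, by applying a Fiedler-Pt\'{a}k scaling $D$ from Section~\ref{ss:vis}, I would further assume $A$ is visualized, so that every entry of $A$ is $\leq\bunity$ and every critical edge has weight $\bunity$; the conjugation $A\mapsto D^-AD$ pulls through $C$, $S$, $R$ cleanly, so neither reduction changes what needs to be shown.

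In this normalized setting, $S$ becomes the Boolean incidence matrix of $\crit(A)$, so $(S^t)_{k\ell}=\bunity$ when there is a walk $k\to\ell$ of length $t$ lying in $\crit(A)$ and $\bzero$ otherwise. The entries $c_{ik}$ and $r_{\ell j}$ (for $k,\ell$ critical) record maximum weights of walks from $i$ to $k$, respectively from $\ell$ to $j$, whose lengths are nonnegative multiples of $\gamma=\gamma(\crit(A))$. Thus $(CS^tR)_{ij}$ is the maximum weight of concatenations $i\to k\to\ell\to j$, where the middle piece is a walk of length exactly $t$ inside $\crit(A)$, and the outer pieces have lengths in $\gamma\Nat$.

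For the inequality $A^t\geq CS^tR$, I would take any such concatenation realizing $(CS^tR)_{ij}$; it is a genuine walk from $i$ to $j$ of length $t+m\gamma$ for some $m\geq 0$, and its weight is at most $a_{ij}^{(t+m\gamma)}=a_{ij}^{(t)}$ by the Cyclicity Theorem~\eqref{e:period} (which in the normalized setting says $A^{t+\gamma}=A^t$ for $t\geq T(A)$). For the reverse inequality $A^t\leq CS^tR$ at $t\geq T(A)$, I would take a maximum weight walk $W\in\walkslen{i}{j}{t}$ and argue that it must visit $\crit(A)$: a walk avoiding the critical graph has mean weight strictly less than $\lambda=\bunity$, so its total weight is bounded above by a quantity that becomes strictly worse than walks visiting the critical graph once $t$ is large. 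Decomposing $W=W_1W_2W_3$ at its first entry $k$ into and last exit $\ell$ out of $\crit(A)$, I would use that $W_2$ has weight $\bunity$ (visualization) and modify $W_1$, $W_3$ by inserting or removing critical cycles at $k$ and $\ell$ so their lengths become multiples of $\gamma$; this is possible because the cyclicity condition aligns the length of $W_2$ with the needed residues mod $\gamma$.

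The main obstacle I expect is the last step: precisely showing that max weight walks of length $t\geq T(A)$ pass through $\crit(A)$, and then adjusting $W_1$, $W_2$, $W_3$ along critical cycles to match the residue classes required by $C$ and $R$ without losing weight. This is where the definition of $T(A)$ as the true transient is essential, and where the visualization pays off, because once $A$ is visualized, inserting critical cycles is free of charge. Everything else is bookkeeping around the walk interpretation~\eqref{e:walksense1} and the definitions~\eqref{csrdef}.
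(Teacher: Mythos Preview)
The paper does not actually prove Theorem~\ref{schneider}; it is cited as a known result. The paper's own contribution in this direction is Theorem~\ref{t:representation}, which gives the walk characterization $(CS^tR)_{ij}=p\big(\walkslennode{i}{j}{t,\gamma}{\mN}\big)$; combined with the Cyclicity Theorem~\eqref{e:period} this yields Theorem~\ref{schneider} immediately. Your plan is in the same spirit---normalize, interpret $CS^tR$ via walks, and use periodicity---and your inequality $A^t\ge CS^tR$ is correct as written.

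The inequality $A^t\le CS^tR$, however, has two genuine gaps. First, in your decomposition $W=W_1W_2W_3$ at the first critical node~$k$ and last critical node~$\ell$, the middle segment $W_2$ need not stay inside $\crit(A)$: it may exit and re-enter, so $p(W_2)=\bunity$ is false in general. Second, and more seriously, your plan to adjust $l(W_1)$ and $l(W_3)$ to multiples of $\gamma$ by inserting critical closed walks at $k$ and $\ell$ does not always work. Closed walks in $\crit(A)$ through $k$ have lengths that are multiples of $\gamma_k$, the cyclicity of $k$'s component, and $\gamma=\gamma(\crit(A))$ is the \emph{lcm} of all component cyclicities. If $\gamma_k\nmid l(W_1)$ (which is not ruled out when $\crit(A)$ has several components), no such insertion brings $l(W_1)$ into $\gamma\Nat$. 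The remark that ``the definition of $T(A)$ as the true transient is essential'' does not address this; $T(A)$ controls periodicity of $A^t$, not the residue classes of subwalk lengths.

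The fix is exactly what the paper does in the proof of Theorem~\ref{t:representation}, inequality~\eqref{e:representation2}: rather than fixing $k$ and $\ell$ and padding $W_1,W_3$, one takes a long closed walk $V_3$ inside $\subcrit$ at a node near $k$ and uses an appropriately chosen \emph{prefix} of $V_3$ (of any length $0,1,\dots,l(V_3)$) to land the first segment on the correct residue modulo $\gamma$, letting the middle piece begin and end at whatever critical nodes this forces. That freedom to move within $\crit(A)$ is what makes the residue adjustment go through.
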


As it is shown below, Theorem~\ref{schneider} also holds with~$\crit(A)$ replaced by
some representing subgraph~$\subcrit$ of~$\crit(A)$.

Note that this theorem implies periodicity of~$A^t$ after~$T(A)$, because the
sequence of matrices~$CS^tR$ is {\em purely periodic}, i.e., periodic from the
very beginning. In fact, this statement is more generally true for all
completely reducible (and hence also for all representing) subgraphs of~$\crit(A)$:

\begin{proposition}[\cite{SS-11}]
\label{p:purely}
Let~$A\in\Rpnn$ be irreducible and $C,S,R$ be the CSR terms of~$A$ with respect
to some completely reducible subgraph~$\subcrit$ of the critical graph~$\crit(A)$.
Then the sequence of matrices~$CS^tR$ is purely periodic.
\end{proposition}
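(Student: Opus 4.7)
After scaling $A$ by $\lambda(A)^{-}$ and conjugating by a Fiedler--Pt\'ak diagonal (Theorem~\ref{schsch}), I may assume $\lambda(A)=\bunity$ and that $A$ is visualized, so every edge of $\crit(A)$ has weight $\bunity$. Under this reduction, $S$ has entries $\bunity$ on edges of $\subcrit$ and $\bzero$ elsewhere, and the optimal walk interpretation~\eqref{e:walksense1} yields
\begin{equation*}
(CS^tR)_{ij}=\max\bigl\{m_{ik}+m_{lj}\bigr\},
\end{equation*}
where the maximum is over $k,l\in\subcrit$ such that $\subcrit$ contains a walk of length $t$ from $k$ to $l$, and $m_{ik}$ denotes the greatest weight of a walk from $i$ to $k$ in $\digr(A)$ whose length is a multiple of $\gamma=\gamma(\subcrit)$ (the trivial walk being counted when $i=k$). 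Because $\subcrit$ is completely reducible, each admissible pair $(k,l)$ lies in a single s.c.c.\ $\subcrit_\alpha$ of $\subcrit$, whose cyclicity $\gamma_\alpha$ divides $\gamma$; I partition the nodes of each such $\subcrit_\alpha$ into cyclic classes $C_0^\alpha,\ldots,C_{\gamma_\alpha-1}^\alpha$.

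\textbf{Cyclic-class invariance (key lemma).} The next step is to show that $m_{ik}$ depends on $k$ only through the cyclic class of $k$ inside its s.c.c.\ $\subcrit_\alpha$, and likewise $m_{lj}$ depends on $l$ only through its cyclic class. If $k,k'\in C_r^\alpha$, then $\subcrit_\alpha$ contains walks in both directions between $k$ and $k'$ of every sufficiently large length divisible by $\gamma_\alpha$, and in particular of some length $s_0\gamma$ divisible by $\gamma$. These walks have weight $\bunity$ because $A$ is visualized and $\subcrit\subseteq\crit(A)$. Concatenating such a walk with one achieving $m_{ik}$ (of length $s\gamma$) produces a walk from $i$ to $k'$ of length $(s+s_0)\gamma$ and the same weight, hence $m_{ik'}\geq m_{ik}$; the reverse inequality is symmetric.

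\textbf{Assembling pure periodicity.} Denote by $\mu_i^\alpha(r)$ and $\nu_j^\alpha(r)$ the common values of $m_{ik}$ and $m_{lj}$ on $C_r^\alpha$. Since $\subcrit_\alpha$ is non-trivial and strongly connected, each class $C_r^\alpha$ is non-empty and every node has an out-edge in $\subcrit_\alpha$, so for every $t\geq 0$ and every $r_1$, following edges from any node of $C_{r_1}^\alpha$ yields a walk of length $t$ inside $\subcrit_\alpha$ ending in $C_{(r_1+t)\bmod\gamma_\alpha}^\alpha$; conversely, any length-$t$ walk in $\subcrit_\alpha$ starting in $C_{r_1}^\alpha$ ends there. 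Combining these two facts with the invariance lemma gives
\begin{equation*}
(CS^tR)_{ij}=\max_\alpha\,\max_{r=0,\ldots,\gamma_\alpha-1}\bigl[\mu_i^\alpha(r)+\nu_j^\alpha((r+t)\bmod\gamma_\alpha)\bigr],
\end{equation*}
which depends on $t$ only through $t\bmod\gamma_\alpha$ for each $\alpha$, and therefore only through $t\bmod\gamma$ since $\gamma=\operatorname{lcm}_\alpha\gamma_\alpha$. This is exactly pure periodicity with period $\gamma$.

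\textbf{Main obstacle.} The technical heart is the cyclic-class invariance lemma. A naive attempt to prove $CS^\gamma R=CSR$ by moving the summation index $k$ along a closed walk of length exactly $\gamma$ fails, since such a closed walk need not exist at every node of $\subcrit_\alpha$, even though $\gamma_\alpha$ divides $\gamma$. The Kleene star built into $M=((\lambda(A)^{-}\otimes A)^{\gamma})^{*}$ is what rescues the argument: it absorbs the length adjustment into a supremum, so it suffices to find \emph{some} sufficiently large multiple of $\gamma$ that is realised as a walk length within the same cyclic class, which is guaranteed by strong connectedness and cyclicity.
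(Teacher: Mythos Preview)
Your argument is correct. The reduction to the visualized case is harmless (diagonal conjugation just conjugates $CS^tR$), the cyclic-class invariance of $m_{ik}$ follows exactly as you say from concatenating weight-$\bunity$ critical walks of length divisible by $\gamma$, and your final formula for $(CS^tR)_{ij}$ manifestly depends on $t$ only modulo each $\gamma_\alpha$, hence modulo $\gamma$.

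Your route, however, differs from the paper's. The paper does not argue via cyclic classes at all; instead it proves the stronger representation Theorem~\ref{t:representation}, namely that $(CS^tR)_{ij}$ equals the supremum of weights of walks from $i$ to $j$ through a fixed set $\mN$ of critical representatives, subject only to the constraint that the length be $\equiv t\pmod\gamma$. Pure periodicity is then immediate, since that walk set depends on $t$ only through its residue. The paper's approach thus buys more: an explicit optimal-walk interpretation of $CS^tR$ that is reused throughout (Corollaries~\ref{c:csr-indep} and~\ref{c:representation}, and the whole proof strategy of Section~\ref{s:ProofS}). Your approach is more self-contained for the purpose of Proposition~\ref{p:purely} alone, and your ``key lemma'' --- that the columns of $M$ indexed by nodes in the same cyclic class of $\subcrit$ coincide --- is of independent interest. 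The underlying mechanism (inserting critical closed walks whose length is a suitable multiple of $\gamma$) is the same in both proofs; you package it as an invariance statement about $M$, while the paper packages it as a walk-set identity for $CS^tR$.
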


This fact was shown by Sergeev and Schneider~\cite{SS-11}, where CSR
terms with respect to completely reducible subgraphs of $\crit(A)$
were studied in detail. It can also be deduced from
Theorem~\ref{t:representation} proved below.



A {\em weak CSR expansion\/} of~$A$ is an expansion of the
form~\eqref{weak-exp} where $C,S,R$ are CSR terms with respect to
some representing subgraph of~$\crit(A)$ and~$\digr(B)$ is a
subgraph of~$\digr(A)$ disjoint to~$\crit(A)$. In particular, the
result of Theorem~\ref{schneider} is also a weak CSR expansion
(take~$B$ equal to the max-plus zero matrix).

By iteration of weak CSR expansions, we recover the CSR
decomposition of~$A$ introduced in~\cite{SS-11}. Bounds on~$T_1$
give bounds on the time from which $A^t$ admits such a
decomposition. (See Corollary~\ref{c:T1ha}).

\section{Weak CSR Schemes}\label{s:csr}

In this section, we introduce our three schemes for weak CSR expansions and
discuss their relation.
We define them in terms of the subgraph~$\digr$ of~$\digr(A)$ whose edges denote the
indices that are set to~$\bzero$ in the subordinate matrix~$B$.
More explicitly:
\begin{equation}\label{e:CSRschemes}
b_{ij} =
\begin{cases}
\bzero & \text{if $i$ or $j$ is a node of $\digr$} \\
a_{ij} & \text{else}
\end{cases}
\end{equation}

The three schemes are:

\begin{enumerate}
\item {\em Nachtigall scheme}. Here, the subgraph $\digr=\crit(A)$.
We denote the resulting matrix~$B$ by $\bnacht$.

This scheme is consistent with the expansions introduced by
Nachtigall~\cite{Nacht}, which was studied by
Moln\'arov\'a~\cite{Mol-03} and Sergeev and Schneider~\cite{SS-11}. It was
used by almost all authors who studied matrix transients \cite{AGW-05,BG-00,CBFN-12,SyK:03}, excluding
Hartmann and Arguelles~\cite{HA-99}.

\item {\em Hartmann-Arguelles scheme}. This scheme is defined in terms of the
max-balancing~$V=(v_{ij})$ of~$A$.
Given $\mu\in\Rp$, define the {\em
Hartmann-Arguelles threshold graph} $\thrha(\mu)$ induced by all
edges~$(i,j)$ in $\digr(A)=\digr(V)$
with $v_{ij}\geq\mu$.
For $\mu=\lambda(A)=\lambda(V)$ we have $\thrha(\mu)=\crit(A)=\crit(V)$.
Let $\mu^{ha}$ be the maximum of $\mu\leq\lambda(A)$ such that $\thrha(\mu)$ has a s.c.c.\
that does not contain any s.c.c.\ of $\crit(A)$.
If no such~$\mu$ exists, then $\mu^{ha}=\bzero$ and $\thrha(\mu^{ha})=\digr(A)$.

The subgraph $\digr=\hagr$ defining~$B$ in the Hartmann-Arguelles scheme is the union
of the s.c.c of $\thrha(\mu^{ha})$ intersecting~$\crit(A)$.
We denote this matrix~$B$ by $\bharg$.
Observe that $\lambda(\bharg)=\mu^{ha}$
and the graphs $\thrha(\mu)$, for all $\mu$, are completely
reducible due to max-balancing (more precisely, the cycle cover
property).


\item {\em Cycle threshold scheme}.
For $\mu\in\Rp$, define the
{\em cycle threshold graph} $\thrct(\mu)$ induced by all nodes
and edges belonging to the cycles in $\digr(A)$ with mean weight greater or
equal to~$\mu$.
Again, for $\mu=\lambda(A)$ we have
$\thrct(\mu)=\crit(A)$.
Let $\mu^{ct}$ be the maximum of $\mu\leq\lambda(A)$ such that $\thrct(\mu)$ has a s.c.c.\
that does not contain any s.c.c.\ of $\crit(A)$.
If no such~$\mu$ exists, then $\mu^{ct}=\bzero$ and $\thrct(\mu^{ct})$ is
equal to~$\digr(A)$.

The subgraph $\digr=\ctgr$ defining~$B$ in the cycle threshold scheme is the union
of the s.c.c of~$\thrct(\mu^{ct})$ intersecting~$\crit(A)$.
This matrix~$B$ will be denoted by $\bct$.
We again observe that $\lambda(\bct)=\mu^{ct}$.
\end{enumerate}

\begin{remark}\label{r:Boolean}
Since $\crit(A)\subset\digr$, we see that $\lambda(B)<\lambda(A)$.

In particular, if~$A$ is an irreducible Boolean matrix,
then $\crit(A)=\digr(A)$ and~$B=(\bzero)$
for all schemes,
thus~$T_1(A,\bnacht)=T_1(A,\bharg)=T_1(A,\bct)=T(A)$.

The weak CSR thresholds hence are generalizations of the transient
of irreducible Boolean matrices, which has been investigated in the
literature under the name {\em index (of convergence)\/} of~$A$, or also {\em
exponent\/} in case of primitive matrices. See Section~\ref{s:Ep}
for a brief account.
\end{remark}

\begin{proposition}
The matrices~$\bnacht$ and~$\bharg$ can be computed in polynomial time.
The computation of the threshold graphs~$\thrct(0)$ is NP-hard.
\end{proposition}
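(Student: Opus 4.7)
The proposition splits into the polynomial-time computability of $\bnacht$ and $\bharg$, which follows by assembling standard algorithms, and the NP-hardness of computing $\thrct(0)$, which requires a reduction.

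For $\bnacht$ I would first compute the maximum cycle mean $\lambda(A)$ by Karp's minimum-mean-cycle algorithm. After subtracting $\lambda(A)$ from every finite entry of $A$, critical edges are exactly those belonging to a nontrivial strongly connected component of the subgraph induced by the zero-weight edges, recoverable by a standard SCC computation; zeroing the rows and columns indexed by the critical nodes then yields $\bnacht$. For $\bharg$, I would invoke the polynomial-time max-balancing procedure of Schneider and Schneider~\cite{SS-91} to obtain $V$, and then scan the at-most $n^2$ distinct entries of $V$ in decreasing order starting from $\lambda(A)$: for each candidate $\mu$, compute the SCCs of $\thrha(\mu)$ and check whether any of them is disjoint from $\crit(A)$. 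The first such $\mu$ is $\mu^{ha}$, and $\hagr$ is the union of those SCCs of $\thrha(\mu^{ha})$ that meet $\crit(A)$. Both stages are polynomial.

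For the NP-hardness of $\thrct(0)$, I would reduce from the DIRECTED HAMILTONIAN $s$-$t$-PATH problem. Given $G=(V,E)$ with $n=|V|$ and distinct $s,t\in V$, first delete the edge $(t,s)$ from $E$ if present (no Hamiltonian $s$-$t$ path can use it). Construct the weighted digraph $G'$ on $V$ with edge set $E\cup\{(t,s)\}$, weighting every edge of $E$ by $+1$ and the new edge $(t,s)$ by $-(n-1)$. Any simple cycle of $G'$ through $(t,s)$ decomposes as a simple $s$-$t$ path of length $\ell\in\{1,\ldots,n-1\}$ in $G$ concatenated with $(t,s)$, carrying total weight $\ell-(n-1)$; hence its mean weight is non-negative if and only if $\ell=n-1$, i.e., the path is Hamiltonian. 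Therefore $(t,s)$ belongs to $\thrct(0)$ of $G'$ iff $G$ admits a Hamiltonian $s$-$t$ path. Since testing edge membership in $\thrct(0)$ given the subgraph is trivially polynomial, this establishes NP-hardness.

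The hard part is the third paragraph. Mean weight is a quotient, so a uniform additive penalty on edges cannot on its own select a prescribed cycle length, and the weighting must be tuned so that only a path visiting every vertex can compensate for the negative weight of the shortcut edge $(t,s)$. The remaining simple cycles of $G'$, namely those lying entirely in $G$ and having mean weight $+1$, are harmless because they do not pass through $(t,s)$ and so do not interfere with the membership test that drives the reduction.
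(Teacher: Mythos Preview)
Your overall approach matches the paper's: polynomial-time assembly of standard algorithms for $\bnacht$ and $\bharg$, and for the hardness part a reduction that inserts a negatively weighted back-edge and tests its membership in $\thrct(0)$. The paper reduces from the (weighted) Longest Path decision problem with threshold $K$ by inserting an edge $(j,i)$ of weight $-K$; your Hamiltonian $s$--$t$ Path reduction is simply the special case of unit weights with $K=n-1$, and the argument is identical in spirit and correct as written.

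There is, however, a genuine slip in your computation of $\bnacht$. Subtracting $\lambda(A)$ from every finite entry does \emph{not} make the critical edges coincide with the zero-weight edges: after that shift the \emph{mean} of each cycle drops by $\lambda(A)$, so critical cycles have total weight $0$, but individual edges on a critical cycle need not have weight $0$. (For instance $a_{12}=1$, $a_{21}=-1$ gives $\lambda(A)=0$ and a critical $2$-cycle, yet the zero-weight-edge subgraph is empty.) What you described is correct only for a \emph{visualized} matrix, which requires a diagonal similarity $D^-(\lambda^-\otimes A)D$ rather than a mere scalar shift. The paper instead uses the criterion $a_{ij}\otimes a^*_{ji}=\bunity$ (when $\lambda(A)=\bunity$) to detect critical edges in $O(n^3)$; either that criterion or an explicit visualization step fixes your argument, and the rest of your proof goes through.
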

\begin{proof}
The computation of $\bnacht$ relies on the computation of
$\crit(A)=(N_c,E_c)$, for which we can exploit the well-known
criterion $a_{ij}a^*_{ji}=\1\Leftrightarrow (i,j)\in E_c(A)$  (when
$\lambda(A)=\1$). This yields complexity at most $O(n^3)$.

Concerning $\bharg$, Schneider and Schneider~\cite{SS-91} proved
that a max-balancing of~$A$ can be computed in polynomial time (at
most $O(n^4)$). The same order of complexity is added if we
``brutally'' examine at most $n^2$ threshold graphs (for each of
them, the strongly connected components found in $O(n^2)$ time). A
better complexity result can be derived from the work of
Hartmann-Arguelles~\cite{HA-99}.

To show NP-hardness of the computation of~$\thrct(\mu)$, we reduce
the Longest Path Problem~\cite[p.~213, ND29]{GJ-79} to it. Consider the
Longest Path Problem as a decision problem that takes as input an
edge-weighted digraph with integer weights, a pair of nodes $(i,j)$ with $i\neq j$
in the digraph, and an integer~$K$. The output is YES if there
exists a path of weight at least~$K$ from~$i$ to~$j$. The output is
NO if there is none. 
Observe that if $i\neq j$, then by inserting
the edge $(j,i)$ with weight $-K$, the Longest Path Problem can be
polynomially reduced to the problem of calculating $\thrct(0)$ by
checking whether the new edge~$(j,i)$ belongs to $\thrct(0)$.
\end{proof}

The relation between these schemes is as follows. The cycle
threshold scheme is most precise, while the Nachtigall scheme
is the coarsest. We measure this in terms of the size of $B$ and the value
$\lambda(B)$.

\begin{proposition}\label{exp-relations}
$\bct$ is subordinate to $\bharg$, which is subordinate
to $\bnacht$. In particular,
$$\lambda(\bct)\leq\lambda(\bharg)\leq\lambda(\bnacht)\enspace.$$
\end{proposition}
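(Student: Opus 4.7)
The plan is to reduce everything to two containments of the ``zeroing'' node sets. Under the construction~\eqref{e:CSRschemes}, ``$B_1$ is subordinate to $B_2$'' is equivalent to the containment of the defining node set of $B_2$ inside that of $B_1$, and once this is established the $\lambda$-inequalities follow automatically, because a larger zeroing set produces an entrywise smaller matrix and the maximum cycle mean is monotone. It therefore suffices to prove $\crit(A) \subseteq \hagr \subseteq \ctgr$ as node sets.

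The first inclusion is immediate: since $\mu^{ha} \le \lambda(A)$ and critical edges carry $v$-weight exactly $\lambda(A)$ in the max-balanced $V$, every critical node is the endpoint of a critical edge of $\thrha(\mu^{ha})$ and hence sits in some s.c.c.\ of $\thrha(\mu^{ha})$ that trivially intersects $\crit(A)$; the same argument gives $\crit(A) \subseteq \ctgr$. For the second inclusion I would first record the containment $\thrha(\mu) \subseteq \thrct(\mu)$ valid for every $\mu$, which follows from cycle cover: an edge $(i,j)$ with $v_{ij} \ge \mu$ sits on a cycle in $V$ whose edges all have $v$-weight $\ge \mu$, and by telescoping of the diagonal factors this cycle has $A$-mean equal to its $V$-mean, which is $\ge \mu$; so $(i,j) \in \thrct(\mu)$. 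I would then establish the key inequality $\mu^{ct} \le \mu^{ha}$. Granted both, any $i \in \hagr$ lies in the same s.c.c.\ of $\thrha(\mu^{ha})$ as some critical $k$; the paths $i \to k$ and $k \to i$ in $\thrha(\mu^{ha})$ live in $\thrct(\mu^{ha}) \subseteq \thrct(\mu^{ct})$, so $i$ and $k$ belong to a common s.c.c.\ of $\thrct(\mu^{ct})$, which intersects $\crit(A)$ through $k$ and hence lies in $\ctgr$.

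The main obstacle is the inequality $\mu^{ct} \le \mu^{ha}$, which I would prove by a ``no boundary crossing'' argument. Fix a free s.c.c.\ $C$ of $\thrct(\mu^{ct})$; any edge of $C$ lies on a cycle in $\digr(A)$ of $A$-mean $\ge \mu^{ct}$, which is strongly connected inside $\thrct(\mu^{ct})$ and shares a node with $C$, so it is absorbed into $C$. This yields a cycle $Z^* \subseteq C$ of $A$-mean $\ge \mu^{ct}$, free of critical nodes. The crucial claim is that no edge $(i,j) \in \digr(V)$ with $v_{ij} \ge \mu^{ct}$ can have exactly one endpoint in $C$: cycle cover produces a cycle in $V$ through $(i,j)$ with all $v$-weights $\ge v_{ij} \ge \mu^{ct}$, hence $A$-mean $\ge \mu^{ct}$, hence contained in a single s.c.c.\ of $\thrct(\mu^{ct})$, which by overlap with $C$ must equal $C$. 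Finally, since the average $v$-weight on $Z^*$ equals its $A$-mean and is $\ge \mu^{ct}$, some edge $(i_0, j_0) \in Z^*$ satisfies $v_{i_0 j_0} \ge \mu^{ct}$; cycle cover then produces a cycle $Z^{**}$ in $V$ through $(i_0, j_0)$ whose edges all have $v$-weight $\ge \mu^{ct}$, and by the no-boundary-crossing property $Z^{**} \subseteq C$. Thus $Z^{**}$ is a critical-free cycle entirely inside $\thrha(\mu^{ct})$, witnessing a free s.c.c.\ there and yielding $\mu^{ha} \ge \mu^{ct}$, which completes the argument.
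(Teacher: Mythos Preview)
Your proof is correct and follows the same overall architecture as the paper's: reduce to the node-set inclusion $\hagr\subseteq\ctgr$, establish $\thrha(\mu)\subseteq\thrct(\mu)$ via cycle cover, prove the key inequality $\mu^{ct}\le\mu^{ha}$ by exhibiting a free strongly connected piece of $\thrha(\mu^{ct})$ inside a free component of $\thrct(\mu^{ct})$, and then deduce the inclusion.

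The implementation differs in two places, and in both your version is a little cleaner. For $\mu^{ct}\le\mu^{ha}$, the paper proves the stronger fact that \emph{every} edge of a free component $C$ of $\thrct(\mu^{ct})$ has max-balanced weight exactly $\mu^{ct}$ (so $C$ sits wholesale inside $\thrha(\mu^{ct})$), via a contradiction using cycle cover; you instead locate a \emph{single} cycle $Z^{**}\subseteq C$ lying in $\thrha(\mu^{ct})$, which is all that is needed. (Your ``no boundary crossing'' lemma is correct but actually redundant: since $Z^{**}$ is a cycle contained in $\thrct(\mu^{ct})$ and meets $C$, it automatically lies in $C$.) For the final step $\hagr\subseteq\ctgr$, the paper splits into the cases $\mu^{ct}=\mu^{ha}$ and $\mu^{ct}<\mu^{ha}$, whereas you give a uniform argument: any $i\in\hagr$ is joined to a critical node within $\thrha(\mu^{ha})\subseteq\thrct(\mu^{ha})\subseteq\thrct(\mu^{ct})$, hence lies in a component of $\thrct(\mu^{ct})$ meeting $\crit(A)$. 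The paper's stronger edge-weight statement is of independent interest, but for the proposition itself your route is more economical.
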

\begin{proof} Evidently both $\digr(\bct)$ and $\digr(\bharg)$ are subgraphs of
$\digr(\bnacht)$, which is extracted from all non-critical nodes.
This implies $\lambda(\bct)\leq\lambda(\bnacht)$ and
$\lambda(\bharg)\leq\lambda(\bnacht)$.

We show that $\digr(\bct)$ is a subgraph of $\digr(\bharg)$. For
this we can assume that the whole digraph is max-balanced, and
notice first that $\thrha(\mu)\subseteq\thrct(\mu)$ for any value of
$\mu$. We also have that $\thrha(\mu_1)\supseteq\thrha(\mu_2)$ and
$\thrct(\mu_1)\supseteq\thrct(\mu_2)$ for any $\mu_1\leq\mu_2$. Now
consider the value $\mu^{ct}$. The components of $\thrct(\mu^{ct})$
which do not contain the components of $\crit(A)$, have the property
that any other cycle intersecting with them has a strictly smaller
cycle mean. It follows that all edges of these components have cycle
mean $\mu^{ct}$. Indeed, suppose that there is a component
containing an edge with a different weight. In this component, any
cycle that contains this edge also has an edge with weight strictly
greater than $\mu^{ct}$. The cycle cover property implies that there
is a cycle containing this edge, where this edge has the smallest
weight. The mean of that cycle is strictly greater than $\mu^{ct}$,
a contradiction. But then $\thrha(\mu^{ct})$ contains these
components as its s.c.c.'s. In particular they do not contain the
components of $\crit(A)$, hence $\mu^{ct}\leq\mu^{ha}$.

If $\mu:=\mu^{ct}=\mu^{ha}$ then $\thrha(\mu)\subseteq\thrct(\mu)$, while we have shown that the components of $\thrct(\mu)$ not
containing the components of $\crit(A)$ are also components of $\thrha(\mu)$. It follows that $\hagr\subseteq\ctgr$.

If $\mu^{ct}<\mu^{ha}$ then we obtain that
$$\ctgr\supseteq\thrct(\mu^{ha})\supseteq\thrha(\mu^{ha})\supseteq\hagr,$$
thus $\hagr\subseteq\ctgr$ in any case, hence $\digr(\bct)\subset\digr(\bharg)$.
\end{proof}

The following example shows that all three schemes can differ and, moreover, that the thresholds $T_1(A,\bnacht),$
$T_1(A,\bharg)$ and $T_1(A,\bct)$ can all differ.

\begin{example}\label{ex:lambdas}

Consider a matrix
\begin{equation}
\label{e:gl-matx}
A=
\begin{pmatrix}
0 & 0  &  -1 &  -\infty & -7 \\
0 & 0  &  -1 &  -\infty &  -7 \\
-1 & -1 &  -1  & -3 &  -7 \\
-3 &  -\infty  & -\infty &  -2 & -7 \\
-7 & -7 & -7 & -7 & -3
\end{pmatrix}
\end{equation}
In this example we have $\lambda(A)=0$, it is visualized and, moreover,
max-balanced. The matrices $\bnacht$, resp. $\bharg$ and
$\bct$ are formed by setting the first $2$ rows and columns, resp. the first $3$ and $4$
rows and columns to $\bzero=-\infty$, and the corresponding values are
$\lambda(\bnacht)=-1$, $\lambda(\bharg)=-2$ and $\lambda(\bct)=-3$.
The corresponding thresholds are $T_1(A,\bnacht)=2$, $T_1(A,\bharg)=3$ and
$T_1(A,\bct)=4$: all different. The periodicity threshold of $(A^{\otimes t})_{t\geq 1}$
is equal to $T(A)=5$, which is the same as $T_2(A,\bnacht)=T_2(A,\bharg)=T_2(A,\bct)$.

Let us provide a class of examples that generalizes the example above to arbitrary dimension. For any matrix $A$ in this
class of examples, all three schemes are different but the corresponding thresholds $T_1(A,B)$ may coincide.

Consider a matrix $A$ such that
the node set $N$ of $\digr(A)$ is partitioned into
$N=N_c\cup N_n\cup N_{ha}\cup N_{ct}$, see Figure~\ref{f:incompar}.
For each $x\in\{c,n,ha,ct\}$, the nodes in $N_x$ form a strongly connected graph
where all edges have weight $\lambda_x$. We set $\lambda_c>\lambda_n>\lambda_{ha}>\lambda_{ct}$.
For each set $N_x$ with $x\in\{n,ha,ct\}$, we assume that there is at least one edge from $N_x$ to some
set $N_y$ with $\lambda_y>\lambda_x$, and one edge from one of such $N_y$ to $N_x$.
With this assumption, it can be shown that $\digr(A)$ is strongly connected.
Let us also assume that all such edges (from $N_x$ and to $N_x$) have the same weight $\delta_x$.
Observe that for the matrix of~\eqref{e:gl-matx}, we have $N_c=\{1,2\}$, $N_n=\{3\}$,
$N_{ha}=\{4\}$ and $N_{ct}=\{5\}$; $\lambda_c=0$, $\lambda_n=-1$, $\lambda_{ha}=-2$ and $\lambda_{ct}=-3$;
$\delta_n=-1$, $\delta_{ha}=-3$ and $\delta_{ct}=-7$.

\begin{figure}
\centering
\begin{tikzpicture}[>=latex',scale=1.5,thick]
\begin{scope}[shift={(0,0)}]
\node[shape=circle,draw] (c1) at (0,0) {};
\node[shape=circle,draw] (c2) at (0.8,-0.7) {};
\node[shape=circle,draw] (c3) at (0.3,-1.5) {};
\node[shape=circle,draw] (c4) at (-0.6,-1.5) {};
\node[shape=circle,draw] (c5) at (-0.8,-0.6) {};

\draw[->,dashed] (c1) to [out=-20,in=120]
node[below]{$\scriptstyle\lambda_c$} (c2);
\draw[->,dashed] (c2) to [out=170,in=0] node[above]{$\scriptstyle\lambda_c$} (c5);
\draw[->,dashed] (c5) to [out=90,in=180] node[left]{$\scriptstyle\lambda_c$} (c1);

\draw[->,dashed] (c3) to [out=200,in=-20]
node[above]{$\scriptstyle\lambda_c$} (c4);
\draw[->,dashed] (c4) to [out=120,in=-90]
node[right]{$\scriptstyle\lambda_c$} (c5);
\draw[->,dashed] (c5) to [out=-20,in=100] node[below]{$\scriptstyle\lambda_c$} (c3);

\node[cloud, cloud puffs=24, draw,minimum width=3.5cm, minimum
height=3.5cm,thin] at (-0.05,-0.8) {};

\node at (-1.3,-0.2) {$N_c$};
\end{scope}

\begin{scope}[shift={(3,0)}]
\node[shape=circle,draw] (n1) at (0,0) {};
\node[shape=circle,draw] (n2) at (0.8,-0.7) {};
\node[shape=circle,draw] (n3) at (0.3,-1.5) {};
\node[shape=circle,draw] (n4) at (-0.8,-1.2) {};

\draw[->,dashed] (n1) to [out=-20,in=120]
node[below]{$\scriptstyle\lambda_n$} (n2);
\draw[->,dashed] (n2) to [out=-90,in=40]
node[left]{$\scriptstyle\lambda_n$} (n3);
\draw[->,dashed] (n3) to 
node[left]{$\scriptstyle\lambda_n$} (n1);
\draw[->,dashed] (n3) to [out=200,in=-40]
node[above]{$\scriptstyle\lambda_n$} (n4);
\draw[->,dashed] (n4) to [out=90,in=-150]
node[right]{$\scriptstyle\lambda_n$} (n1);

\node[cloud, cloud puffs=24, draw,minimum width=3.5cm, minimum
height=3.5cm,thin] at (0,-0.8) {};

\node at (1.3,-0.2) {$N_n$};
\end{scope}

\begin{scope}[shift={(0,-3)}]
\node[shape=circle,draw] (h1) at (0,0) {};
\node[shape=circle,draw] (h2) at (-0.5,-1.4) {};
\node[shape=circle,draw] (h3) at (-1.7,-0.6) {};

\draw[->,dashed] (h1) to [out=-100,in=40]
node[left]{$\scriptstyle\lambda_{ha}$} (h3);
\draw[->,dashed] (h2) to [out=-190,in=-60]
node[right]{$\scriptstyle\lambda_{ha}$} (h3);
\draw[->,dashed] (h3) to [out=0,in=90]
node[right]{$\scriptstyle\lambda_{ha}$} (h2);

\draw[->,dashed] (h3) to [out=80,in=160]
node[below]{$\scriptstyle\lambda_{ha}$} (h1);

\node[cloud, cloud puffs=24, draw,minimum width=3.7cm, minimum
height=3.7cm,thin] at (-0.8,-0.5) {};

\node at (-1.6,0.7) {$N_{ha}$};
\end{scope}

\begin{scope}[shift={(3,-3)}]
\node[shape=circle,draw] (t1) at (0,0) {};
\node[shape=circle,draw] (t2) at (0.9,-0.7) {};
\node[shape=circle,draw] (t3) at (0.1,-1.5) {};
\node[shape=circle,draw] (t4) at (-0.8,-0.8) {};

\draw[->,dashed] (t1) to [out=-10,in=120]
node[below]{$\scriptstyle\lambda_{ct}$} (t2);
\draw[->,dashed] (t2) to [out=-110,in=20]
node[left]{$\scriptstyle\lambda_{ct}$} (t3);
\draw[->,dashed] (t2) to 
node[above]{$\scriptstyle\lambda_{ct}$} (t4);
\draw[->,dashed] (t3) to [out=180,in=-70]
node[right]{$\scriptstyle\lambda_{ct}$} (t4);
\draw[->,dashed] (t4) to [out=80,in=-170]
node[right]{$\scriptstyle\lambda_{ct}$} (t1);

\node[cloud, cloud puffs=24, draw,minimum width=3.5cm, minimum
height=3.5cm,thin] at (0,-0.7) {};

\node at (1.3,-0.2) {$N_{ct}$};
\end{scope}

\draw[<->,thick,dashed] (1,-1) to node[below]
{$\scriptstyle\delta_n$} (2,-1);

\draw[<->,thick,dashed] (3,-1.8) to node[right]
{$\scriptstyle\delta_{ct}$} (3,-2.8);

\draw[<->,thick,dashed] (0.2,-3.8) to node[below]
{$\scriptstyle\delta_{ct}$} (2,-3.8);

\draw[<->,thick,dashed] (-0.7,-2.7) to node[left]
{$\scriptstyle\delta_{ha}$} (-0.2,-1.7);

\draw[<->,thick,dashed] (0.7,-1.3) to node[above]
{$\scriptstyle\delta_{ct}$} (2.7,-3);

\draw[<->,thick,dashed] (0,-3.5) to node[above]
{$\scriptstyle\delta_{ha}$}(2.8,-1.6);



\end{tikzpicture}
\caption{A schematic sketch of~$\digr(A)$ of Example~\ref{ex:lambdas} (the general case)}
\label{f:incompar}
\end{figure}
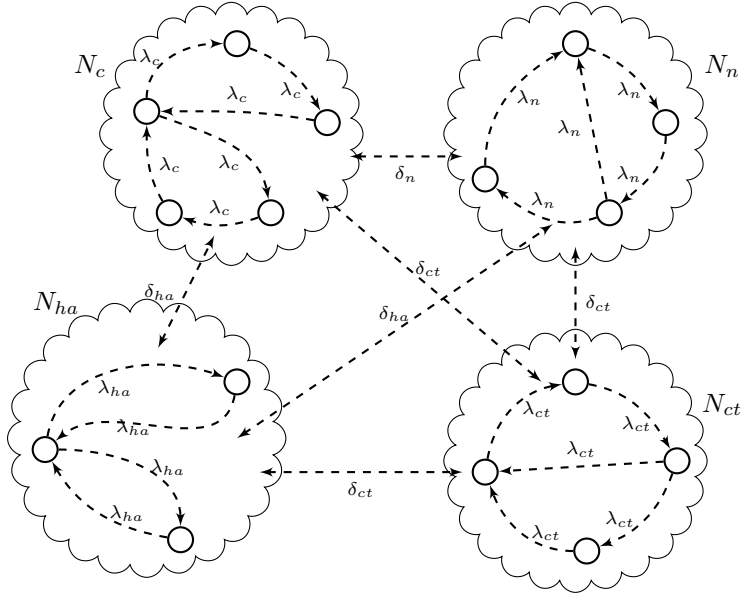

Assume that $\delta_x$ satisfies
\begin{equation}
\label{e:deltax}
\delta_x\leq\min (\lambda_x,\  
\min\{\delta_y\mid\lambda_y>\lambda_x\}).
\end{equation}
Then $\digr(A)$ is also max-balanced (since it can be shown that each edge $(i,j)$ with $i\neq j$ is on a cycle where it has the smallest
weight).

We also see that $\lambda_c=\lambda(A)$, $\lambda_n=\lambda(\bnacht)$, while
$\lambda_{ha}$ and $\lambda_{ct}$ are ``candidates'' for $\lambda(\bharg)$ and
$\lambda(\bct)$, respectively. 
To enforce the correct behaviour of threshold graphs and ensure that $\lambda(\bharg)=\lambda_{ha}$ and
$\lambda(\bct)=\lambda_{ct}$, we set:\\
1) $\delta_n=\lambda_n$,\\
2) $\delta_{ha}=\lambda_{ha}-s$, where $s$ is chosen in such a way that the inequality
$$(\ell(\cycle)-2)\cdot\lambda_n+2(\lambda_{ha}-s)\geq\ell(\cycle)\cdot\lambda_{ha}$$
holds at least for one cycle $\cycle$ containing a node in $N_{ha}$ and a node in
$N_c\cup N_n$;\\
3) $\delta_{ct}$ not greater than $\delta_h$ (for the sake of max-balancing)
and such that the mean weight
of each cycle containing a node of $N_{ct}$
and a node of $N\backslash N_{ct}$ is strictly less than $\lambda_{ct}$.\\
Observe, in particular, that condition 2) ensures that $\thrct(\mu)$ does not gain any
new component as $\mu$ decreases from $\lambda_c$ to $\lambda_{ha}$ so that $\lambda_{ct}<\lambda(\bharg)$,
and that condition 3) ensures $\lambda_{ct}=\lambda(\bct)$. Note that~\eqref{e:gl-matx} satisfies 
conditions 1)-3).


\end{example}

\if{

Consider the following matrix~$A$, described by its
digraph~$\digr(A)$. The node set~$N$ of~$\digr(A)$ is partitioned
into $N=N_{\mathrm C}\cup N_{\mathrm N}\cup N_{\mathrm {HA}} \cup
N_{\mathrm {CT}}$. Further choose parameters $\lambda_\mathrm{C} >
\lambda_{\mathrm N} > \lambda_{\mathrm {HA}} > \lambda_{\mathrm
{CT}}>z$ such that $\lambda_{\mathrm {HA}}\leq (2\lambda_{\mathrm
{CT}} + \lambda_{\mathrm N})/3$. For example, set
$\lambda_\mathrm{C}=0$, $\lambda_{\mathrm N} = -1$,
$\lambda_{\mathrm {HA}} = -3$, $\lambda_{\mathrm {CT}} = -4$,
$z=-5$. The edges in~$\digr(A)$ are as follows: For each
$X\in\{{\mathrm C},{\mathrm N},{\mathrm {HA}},{\mathrm {CT}}\}$ the
nodes in~$N_X$ are connected by one cycle of length~$\lvert
N_X\rvert$ whose edges all have weight $\lambda_X$. Choose an
arbitrary node $i_X$ in $N_X$. There are 6 additional edges
in~$\digr(A)$: The two edges $(i_\mathrm{C},i_\mathrm{N})$ and
$(i_\mathrm{N},i_\mathrm{C})$ with weight $\lambda_\mathrm{N}$, the
two edges $(i_\mathrm{N},i_\mathrm{HA})$ and
$(i_\mathrm{HA},i_\mathrm{C})$ with weight $\lambda_\mathrm{CT}$,
and the two edges $(i_\mathrm{HA},i_\mathrm{CT})$ and
$(i_\mathrm{CT},i_\mathrm{HA})$ with weight $z$.
Figure~\ref{f:incompar} depicts digraph $\digr(A)$.

The cycle cover condition holds in~$\digr(A)$, and so it is
max-balanced. Indeed, the cycle cover condition trivially holds for
all edges contained in a single node set~$N_X$ and for the edges of
the cycles of length~$2$. But it also holds for the two edges
$(i_\mathrm{HA},i_\mathrm{C})$ and $(i_\mathrm{N},i_\mathrm{HA})$
because $\lambda_\mathrm{N} > \lambda_\mathrm{CT}$.

The critical graph of~$A$ is induced by the node set~$N_\mathrm{C}$.
We can check that indeed $\mu^{ha}=\lambda_\mathrm{HA}$ and
$\mu^{ct}=\lambda_\mathrm{CT}$.
This implies that the edge set of~$\bnacht$ is induced by $N_\mathrm{N}\cup
N_\mathrm{HA}\cup N_\mathrm{CT}$, the edge set of~$\bharg$ by
$N_\mathrm{HA}\cup N_\mathrm{CT}$, and the edge set of~$\bct$ by
$N_\mathrm{CT}$.
In particular, $\lambda(\bnacht)=\lambda_\mathrm{N}$,
$\lambda(\bharg)=\lambda_\mathrm{HA}$, and $\lambda(\bct)=\lambda_\mathrm{CT}$.

}\fi

\section{Main Results}\label{s:statements}
In this section, we present the main results of this paper.
These bounds of this section use the following graph parameters of a
digraph~$\digr$:
\begin{itemize}
\item {\em size} $|\digr|$: the number of nodes of~$\digr$,
\item  {\em circumference} $\circumf(\digr)$: the greatest length of a cycle in graph $\digr$,
\item  {\em cab driver's diameter} $\cabdrive(\digr)$: the greatest length of a path in $\digr$,
\item {\em max-girth} $\hat{\g}(\subcrit)$: the greatest {\em girth}, i.e.,
shortest cycle length, of strongly connected components of~$\digr$
\item {\em max-cyclicity} $\hat{\gamma}(\subcrit)$: the greatest cyclicity of
strongly connected components of~$\digr$.
\end{itemize}

The computation of the circumference~$\circumf(\digr)$ and the cab
driver's diameter~$\cabdrive(\digr)$ are both NP-hard in the number of nodes
of~$\digr$.
However, they can be upper bounded by~$\lvert\digr\rvert$
and~$\lvert\digr\rvert-1$, respectively.

Denote by~$\lVert A \rVert$ the difference between the largest and
the smallest finite (i.e., $\neq\bzero$) entry of~$A$, and by~$n_B$
the size of the smallest submatrix of~$B$ containing all its finite
entries.

We explained in the introduction that $T(A)\le\max\big(T_1(A,B),T_2(A,B)\big)$.
Our main results are bounds on~$T_1$ and~$T_2$.
All of them are mutually incomparable.

\begin{theorem}\label{t:T1ha}
For any matrix $A\in\Rpnn$,  if $B=\bnacht$ or $B=\bharg$, we have the following bounds
\begin{align}
T_1(A,B)&\leq \wiel(n)\label{e:T1haWielandt}\\
T_1(A,B)&\leq \hat{\g}(n -2)+n\label{e:T1haDM}\\
T_1(A,B)&\leq (\hat{\g}-1)(\circumf-1)+(\hat{\g}+1)\cabdrive\label{e:T1haCB}
\end{align}
where~$\hat{\gamma}=\hat{\gamma}(\crit(A))$,
$\hat{\g}=\hat{\g}(\crit(A))$, $\circumf=\circumf(\digr(A))$,
$\cabdrive=\cabdrive(\digr(A))$.
\begin{align}
 T_1(A,B)&\leq \hat{\gamma}(n-2)+n-n_c+\ep(\crit(A))\label{e:T1haDMEp}\\
 T_1(A,B)&\leq (\hat{\gamma}-1)(\circumf-1)+(\hat{\gamma}+1)\cabdrive+\ep(\crit(A))\label{e:T1haCBEp}
\end{align}
where $n_c=|\crit(A)|$ (i.e., the number of critical nodes) and
$\ep(\crit(A))$ is the exploration penalty of
$\crit(A)$ (see Definition~\ref{def:TcrEp}).
\end{theorem}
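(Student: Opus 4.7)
The plan is to reduce the bound on $T_1(A,B)$ to a purely combinatorial quantity of $\digr(A)$ relative to $\crit(A)$, namely the cycle removal threshold $T_{cr}$ alluded to in Section~\ref{s:ProofS}, and then bound $T_{cr}$ in terms of the graph parameters appearing in~\eqref{e:T1haWielandt}--\eqref{e:T1haCBEp}.

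First, by the visualization procedure of Section~\ref{ss:vis}, I may assume $\lambda(A)=\bunity$ and $A$ visualized, so that all critical cycles have weight $\bunity$ and every other cycle has strictly smaller mean. In this setting, the CSR term $CS^tR$ expresses, via~\eqref{e:walksense1}, the maximum weight over walks of length $t$ from $i$ to $j$ that visit $\crit(A)$, while $B^t$ captures the walks lying entirely inside $\digr(B)$. The weak CSR expansion~\eqref{weak-exp} thus amounts to the statement that, for $t\geq T_1(A,B)$, every optimal walk of length $t$ from $i$ to $j$ is either matched in weight by one that visits $\crit(A)$, or lies entirely in $\digr(B)$.

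The central combinatorial step is to show that whenever $t\geq T_{cr}$, every walk $W$ of length $t$ that touches a node outside $\digr(B)$ can be reorganized, by replacing some of its non-critical cycles with critical cycles of the same total length, into a walk with the same endpoints and the same length that visits $\crit(A)$. Because critical cycles have weight $\bunity$ and any other cycle has weight at most $\bunity$, this reorganization never decreases $p(W)$. Feasibility relies on the cyclicity/girth of the critical components to match prescribed lengths modulo $\hat{\gamma}$, and on having enough room along $W$ to insert a critical cycle; the exploration penalty $\ep(\crit(A))$ accounts for the extra steps needed to reach the correct critical s.c.c.\ from the node where $W$ first meets $\crit(A)$. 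This reduction $T_1(A,B)\leq T_{cr}$ (with an additive $\ep(\crit(A))$ when the exploration penalty appears) will be carried out in Sections~\ref{s:CsrtoWalk}--\ref{s:TcrToT1}. Once it is in place, bounding $T_{cr}$ becomes a purely graph-theoretic problem parallel to the classical theory of indices of Boolean matrices: the Wielandt bound~\eqref{e:T1haWielandt} follows by a worst-case path-plus-cycle count, while~\eqref{e:T1haDM} and~\eqref{e:T1haCB} arise from Dulmage--Mendelsohn and Hartman--Arguelles style refinements using $\hat{\g}$ or $\hat{\gamma}$ together with $\circumf$ and $\cabdrive$, with the $\ep$-versions~\eqref{e:T1haDMEp}--\eqref{e:T1haCBEp} obtained by a sharper accounting of the first entrance into $\crit(A)$.

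The main obstacle lies in the reduction step for the Hartmann--Arguelles scheme: here $\digr(\bharg)$ properly contains nodes on which cycles of mean $\mu^{ha}<\lambda(A)$ survive, so one cannot simply forget these cycles after cycle removal. One must argue that inserting a critical cycle of comparable total length is still strictly cheaper than keeping a cycle of $\digr(\bharg)$ of the same length, and this is precisely where the cycle cover property of the max-balancing (Theorem~\ref{schsch}) and the maximality in the definition of $\mu^{ha}$ enter. For the Nachtigall scheme $\digr(\bnacht)$ is merely the non-critical part of $\digr(A)$ and the cycle replacement argument is more direct; the same bounds therefore follow a fortiori from Proposition~\ref{exp-relations}, once the Hartmann--Arguelles case has been treated.
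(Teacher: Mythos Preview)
Your overall architecture is right---reduce $T_1(A,B)$ to a cycle removal threshold $T_{cr}$ via Proposition~\ref{p:TcrToT1}, then bound $T_{cr}$ by the graph-theoretic Propositions~\ref{p:cbfn}--\ref{p:TcRLin}---and you correctly locate max-balancing as the key ingredient for the Hartmann--Arguelles scheme. But two points in your proposal are genuinely wrong and would derail the argument if left as stated.

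First, your walk interpretation of $CS^tR$ is incorrect. You write that $(CS^tR)_{ij}$ is the maximum weight of walks of length exactly~$t$ from~$i$ to~$j$ visiting~$\crit(A)$, citing~\eqref{e:walksense1}. But~\eqref{e:walksense1} is the interpretation of $A^t$, not of $CS^tR$; the correct statement is Theorem~\ref{t:representation}: $(CS^tR)_{ij}$ is the optimal weight over walks whose length is congruent to~$t$ modulo~$\gamma$ and that visit~$\subcrit$. Because such walks may be shorter or longer than~$t$, proving~\eqref{weak-exp} requires \emph{two} inequalities, not one. The inequality $A^t\leq CS^tR\oplus B^t$ (the scheme-dependent part,~\eqref{e:SdP}) is close to what you describe. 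But you also need $A^t\geq CS^tR$, i.e.\ that every walk of length $\equiv t\pmod\gamma$ through~$\subcrit$ is matched by a walk of length \emph{exactly}~$t$; this is the scheme-independent part~\eqref{e:SiP}, and it is where the exploration penalty actually enters. Your proposal collapses these two directions into one and therefore misses half the argument.

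Second, your account of the role of~$\ep(\crit(A))$ and of the Hartmann--Arguelles mechanism is off. The exploration penalty has nothing to do with ``reaching the correct critical s.c.c.''; it measures which multiples of~$\gamma$ are realizable as lengths of closed walks in~$\crit(A)$, and it arises when, after cycle removal has shortened a walk below~$t$, one must re-insert critical closed walks to restore the exact length (Lemma~\ref{l:SiP}). Likewise, the HA argument does not compare cycle means. After max-balancing, one uses the threshold graph $\thrha(\mu(W))$: every edge on the detour to~$\crit(A)$ has weight at least the maximum edge weight of~$W$, so the comparison is edge-by-edge (Lemma~\ref{l:SdPHA}). One inserts $\gamma$ copies of the detour and then removes cycles modulo~$\gamma$; the resulting walk has length $\leq t$ but only $\equiv t\pmod\gamma$, which is why the scheme-independent step is indispensable.
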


The exploration penalty $\ep(\crit(A))$ is a quantity that depends
only on the critical graph and can be bounded by its index, see
Section~\ref{s:Ep} for further details.

\begin{remark}\label{r:Boolean2}
As we noted in Remark~\ref{r:Boolean}, those bounds apply to the
transient of Boolean matrices. We thus recover the bound of
Wielandt~\cite{Wie-50} in~\eqref{e:T1haWielandt} and the bound
of Dulmage and Mendelsohn~\cite{DM-64} in~\eqref{e:T1haDM}. Notice that
\eqref{e:T1haDM}
implies~\eqref{e:T1haWielandt} if~$\hat{\g}\le n-1$. The remaining
case is trivial for Boolean matrices because there is only one such
matrix, but in the non-Boolean case we need a different strategy.
(Proposition~\ref{p:TcrHAWielandt} below).

Bound~\eqref{e:T1haWielandt} is optimal in the sense that the bound
is reached for any~$n$, as was already noted in~\cite{Wie-50},
while bound~\eqref{e:T1haDM} is reached if and only if $\hat{\g}$
and~$n$ are coprime (see~\cite{S-85}).
\end{remark}


%
%

Iterating the process of weak CSR expansion, we get the following improvement of~\cite{SS-11}[Theorem~4.2]:
\begin{corollary}[CSR decomposition]\label{c:T1ha}
For any matrix $A\in\Rpnn$, there are some matrices $C_i,S_i,R_i$ defined by induction
with~$S_i$ diagonally similar to Boolean periodic matrices and some
scalars $\lambda_i\in\R$, where $i$~varies between~$1$ and~$K\le n$ such
that we have:
$$\forall t\ge \min\left(\wiel(n),(n-2)\circumf(\digr(A))+n\right), A^t=\bigoplus_{k=1}^K\lambda_i^{\otimes t}C_iS_i^tR_i.$$
\end{corollary}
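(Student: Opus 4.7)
The plan is to iterate the weak CSR expansion. Set $A_1 := A$; at step $k$, for each strongly connected top component of $\digr(A_k)$ I would apply Theorem~\ref{t:T1ha} (using, say, the Nachtigall scheme) to extract one weak CSR summand, and gather these contributions into a single decomposition
\[
A_k^t = \bigoplus_{i \in I_k} \lambda_{k,i}^{\otimes t}\, C_{k,i} S_{k,i}^t R_{k,i} \oplus B_k^t,
\]
valid for $t \geq \max_{i \in I_k} T_1(A_k,B_k)$, where $B_k$ is obtained from $A_k$ by zeroing out the critical rows and columns of those top components. Setting $A_{k+1} := B_k$ and iterating, the process terminates at some $A_{K+1}=(\bzero)$ with $\sum_k |I_k| \leq n$, because each step annihilates at least one row and column. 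Substituting the successive decompositions and relabeling the pair $(k,i)$ by a single index yields the desired form $A^t = \bigoplus_{j=1}^{K} \lambda_j^{\otimes t} C_j S_j^t R_j$ with $K \leq n$.

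Each $S_{k,i}$ is the $S$-term from~\eqref{csrdef} applied to a representing subgraph of the critical graph of the corresponding top block; after the Fiedler-Pt\'ak visualization scaling of Subsection~\ref{ss:vis}, it becomes the Boolean matrix supported on the critical edges, whose powers are purely periodic by Proposition~\ref{p:purely}. This gives the asserted diagonal similarity to Boolean periodic matrices.

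To control the overall threshold it suffices to bound $T_1(A_k,B_k)$ uniformly in $k$. Since $\digr(A_k)$ is a subgraph of $\digr(A)$, the effective size of $A_k$ is at most $n$ and $\circumf(\digr(A_k)) \leq \circumf(\digr(A))$; in particular $\hat{\g}(\crit(A_k)) \leq \circumf(\digr(A))$. Applying bounds~\eqref{e:T1haWielandt} and~\eqref{e:T1haDM} of Theorem~\ref{t:T1ha} to each top block and taking the minimum gives
\[
T_1(A_k,B_k) \leq \min\bigl(\wiel(n),\ (n-2)\,\circumf(\digr(A)) + n\bigr)
\]
for every $k$, which is exactly the threshold claimed in the corollary.

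The main obstacle I anticipate is justifying that Theorem~\ref{t:T1ha}, stated as a single weak CSR expansion attached to the critical graph of an irreducible matrix, can be applied in parallel across the several top blocks of the possibly reducible intermediate $A_k$ while still yielding a single weak CSR expansion of the form~\eqref{weak-exp}. Once this block-wise compatibility and the fact that the sub-block thresholds admit the uniform bound above are verified, termination of the iteration after at most $n$ steps immediately produces the stated decomposition.
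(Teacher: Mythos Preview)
Your approach is essentially the paper's own: the corollary is stated immediately after Theorem~\ref{t:T1ha} with only the one-line justification ``Iterating the process of weak CSR expansion'', and your proposal spells out exactly that iteration together with the uniform bounding of $T_1(A_k,B_k)$ via~\eqref{e:T1haWielandt} and~\eqref{e:T1haDM}. The use of $\hat\g(\crit(A_k))\le\circumf(\digr(A))$ to pass from~\eqref{e:T1haDM} to the stated bound is correct.

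The one place where you add unnecessary work is the decomposition into ``strongly connected top components'' and the anticipated obstacle in your last paragraph. Theorem~\ref{t:T1ha} is stated \emph{for any matrix} $A\in\Rpnn$, not only irreducible ones; the proof for the Nachtigall scheme goes through Lemma~\ref{l:SiP} and the trivial scheme-dependent step, neither of which assumes irreducibility (they only need $\lambda(A)$ finite, i.e., $\digr(A_k)$ to contain a cycle). Hence at each step you may apply the weak CSR expansion directly to the whole $A_k$, obtain a single term $\lambda_k^{\otimes t}C_kS_k^tR_k$, set $A_{k+1}=\bnacht(A_k)$, and iterate until $\digr(A_{K+1})$ is acyclic, whence $A_{K+1}^t=(\bzero)$ for $t\ge n$; the last threshold is dominated by the claimed bound. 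The block-wise treatment is therefore not needed, and the ``main obstacle'' you identify does not in fact arise.
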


\begin{theorem}\label{t:T1ct}
For any matrix $A\in\Rpnn$, we have the following bounds
\begin{align}
T_1(A,\bct)&\leq \wiel(n)\label{e:T1ctW}\\
T_1(A,\bct)&\leq (n-1)\circumf+\min(n,\cabdrive +\circumf+1)\label{e:T1ctCD}\\
T_1(A,\bct)&\leq (\cabdrive+\circumf-1)\circumf +\cabdrive+1\label{e:T1ctCB}
\end{align}
where $\circumf=\circumf(\digr(A))$ and $\cabdrive=\cabdrive(\digr(A))$.
\end{theorem}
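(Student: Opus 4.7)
The plan is to follow the two-stage strategy described in Section~\ref{s:ProofS}: first reduce $T_1(A,\bct)$ to a bound on a cycle removal threshold of $\digr(A)$, and then bound that threshold via graph parameters. The reduction is the content of Sections~\ref{s:CsrtoWalk}--\ref{s:TcrToT1}; the cycle-removal bounds specific to the cycle threshold scheme are developed in Section~\ref{s:Tcr}.

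To set up the reduction, I would translate the identity $A^t=(\lambda(A)^{\otimes t}\otimes CS^tR)\oplus B^t$ with $B=\bct$ into the walk-theoretic requirement that every walk $W$ of length $t$ in $\digr(A)$ from $i$ to $j$ is either (a) dominated in weight by a walk of the same length and endpoints that visits a representing subgraph of $\crit(A)$, or (b) lies entirely in $\digr(\bct)$. The cycle threshold scheme gives a crucial structural handle: by construction of $\mu^{ct}$ and $\ctgr$, every s.c.c.\ of $\thrct(\mu^{ct})$ forming $\ctgr$ already contains a critical cycle, while any cycle of $\digr(A)$ meeting such an s.c.c.\ without being contained in it has mean weight strictly less than $\mu^{ct}=\lambda(\bct)$. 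After visualizing $A$ so that $\lambda(A)=\bunity$, this lets any cycle meeting $\ctgr$ be replaced, at controllable length cost and without loss of weight, by a segment through a critical cycle. Consequently, once $t$ exceeds the cycle removal threshold every walk meeting $\ctgr$ can be reshaped into one satisfying (a), while walks avoiding $\ctgr$ automatically satisfy (b).

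Plugging in the three bounds on the cycle removal threshold proved in Section~\ref{s:Tcr} then gives the three inequalities of the theorem: a Wielandt-type argument yields~\eqref{e:T1ctW}, a bound controlling the cyclic part of a walk by the circumference yields~\eqref{e:T1ctCD}, and a finer decomposition combining $\circumf$ and $\cabdrive$ yields~\eqref{e:T1ctCB}. The Wielandt bound matches the Boolean case of Remark~\ref{r:Boolean}; the other two arise from splitting a walk into a cyclic part, whose reshaping costs a bounded number of substitutions of length at most $\circumf$, and two path tails joining the endpoints to a critical cycle, of combined length bounded by $\cabdrive$ plus a length-alignment overhead. The main obstacle will be verifying that the cycle substitutions inside $\ctgr$ preserve weight---this is precisely where the cycle threshold scheme differs from the Nachtigall and Hartmann--Arguelles schemes of Theorem~\ref{t:T1ha}---together with choosing the replacement cycle to have length congruent modulo $\gamma(\crit(A))$ to the removed one, which accounts for the additive $\cabdrive+\circumf+1$ and $\cabdrive+1$ terms appearing in~\eqref{e:T1ctCD} and~\eqref{e:T1ctCB}.
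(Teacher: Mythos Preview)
Your high-level outline is roughly right, but it misses the key technical device that makes the cycle threshold scheme work and misidentifies the congruence modulus. In the Hartmann--Arguelles scheme one has an edge-level inequality (via max-balancing), so inserting edges from the threshold graph and removing edges of~$W$ cannot decrease weight. In the cycle threshold scheme there is no such edge-level comparison; the only inequality available is between cycle \emph{means}. The paper resolves this with the \emph{staircase} construction (Lemma~\ref{l:staircase}): starting from a subcycle of~$W$ in~$\ctgr$, there is a finite sequence of cycles $\cycle_1,\dots,\cycle_m$ in~$\ctgr$ with nondecreasing mean weight, each sharing a node with the next, ending in a critical cycle. One then iteratively removes cycles of~$W$ and re-inserts copies of~$\cycle_\ell$ at each step, and the staircase property guarantees that all removed cycles have mean at most that of~$\cycle_{\ell+1}$, so weight does not drop. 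This iterative replacement is why Proposition~\ref{p:TcrToT1}(ii) takes the maximum of $\tilde{T}_{cr}^{l(\cycle)}(\cycle)$ over \emph{all} cycles~$\cycle$ in~$\ctgr$, not just critical ones, and why the modulus is~$l(\cycle)$ at each step rather than~$\gamma(\crit(A))$ as you wrote. Your proposal does not contain this idea, and without it the claim that ``any cycle meeting~$\ctgr$ can be replaced\dots without loss of weight'' has no justification.

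There is a second gap for bound~\eqref{e:T1ctW}. Plugging the cycle-removal bounds into Proposition~\ref{p:TcrToT1}(ii) only gives $T_1(A,\bct)\le n^2-n+1$ in the worst case (a Hamiltonian cycle in the staircase), which is weaker than~$\wiel(n)$. The paper closes this gap by a separate argument at the end of Section~\ref{s:TcrLin}: when the walk~$W$ uses $n-1$ nodes and misses only the critical node, that node must carry a critical loop, and one can reduce~$W$ modulo~$1$ (Proposition~\ref{p:TcRLin} with $\gamma=1$) through the penultimate staircase cycle, then pad with the loop. Your sketch treats~\eqref{e:T1ctW} as a direct consequence of a Wielandt-type cycle-removal bound, which it is not.
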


The proof of those theorems is explained in Section~\ref{s:ProofS}
and performed in Sections~\ref{s:CsrtoWalk} and~\ref{s:TcrToT1}. Now
we also state bounds on~$T_2(A,B)$ and on~$T_2(A,B,v)$.

\begin{theorem}\label{t:T2}
 Let $A\in\Rpnn$ be irreducible and let~$B$ be
subordinate to~$A$. Denote $\cabdrive_B:=\cabdrive(\digr(B))$ and
$\hat{\gamma}=\hat{\gamma}(\crit(A))$.\\
If $\lambda(B)=\bzero$, then $T_2(A,B)\le \cabdrive_B+1\le n_B$.
Otherwise, we have the following bounds
\begin{equation}
\label{e:T2a}
\begin{split}
T_2(A,B)&\le \frac{(n^2-n+1)(\lambda(A) -\min_{ij}a_{ij})
+\cabdrive_B (\max_{ij}b_{ij}-\lambda(B))}{\lambda(A)-\lambda(B)}\\
&\le\frac{n^2-n+1}{\lambda(A)-\lambda(B)}\|A\|+\cabdrive_B
\end{split}
\end{equation}
\begin{equation}
\label{e:T2b}
\begin{split}
T_2(A,B) &\le \frac{\left(\hat{\gamma} (n-1)+n\right)(\lambda(A)
-\min_{ij}a_{ij})
+\cabdrive_B(\max_{ij}b_{ij}-\lambda(B))}{\lambda(A)-\lambda(B)}\\
&\le\frac{\hat{\gamma}(n-1)+n}{\lambda(A)-\lambda(B)}\|A\|+\cabdrive_B
\end{split}
\end{equation}
\begin{equation}
\label{e:T2c}
\begin{split}
T_2(A,B)&\le
\frac{\left((\hat{\gamma}-1)\circumf+(\hat{\gamma}+1)\cabdrive\right)(\lambda(A)
-\min_{ij}a_{ij})
+\cabdrive_B(\max_{ij}b_{ij}-\lambda(B))}{\lambda(A)-\lambda(B)}\\
&\le\frac{(\hat{\gamma}-1)\circumf+(\hat{\gamma}+1)\cabdrive}{\lambda(A)-\lambda(B)}\|A\|+\cabdrive_B.
\end{split}
\end{equation}
If~$A$ has only finite entries, then we have:
\begin{align}
\label{e:T2fin}
T_2(A,B)&\le \frac{2(\lambda(A)-\min_{ij}a_{ij})
+(\lambda(B)-\min_{ij}b_{ij})}{\lambda(A)-\lambda(B)}
\le\frac{3\|A\|}{\lambda(A)-\lambda(B)}\\
\label{e:T2finSyK}
T_2(A,B)&\le \frac{2(\lambda(A)-\min_{ij}a_{ij})}{\lambda(A)-\lambda(B)}+\cabdrive_B
\le\frac{2\|A\|}{\lambda(A)-\lambda(B)}+\cabdrive_B.
\end{align}
\end{theorem}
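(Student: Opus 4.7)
The approach is to translate the inequality $\lambda(A)^{\otimes t}\otimes (CS^tR)_{ij}\ge (B^t)_{ij}$ into a walk-theoretic comparison and exploit the fact that walks through the critical subgraph grow at rate $\lambda(A)$ while walks inside $\digr(B)$ grow at rate at most $\lambda(B)<\lambda(A)$. Fix a representing subgraph $\subcrit\subseteq\crit(A)$ of cyclicity $\gamma$, and consider walks of the form $W=W_1W_2W_3$ in $\digr(A)$ from $i$ to $j$, where $W_1:i\to k$ and $W_3:\ell\to j$ have lengths $\alpha,\beta$ that are nonnegative multiples of $\gamma$, and $W_2:k\to\ell$ is a walk of length $t$ lying entirely inside $\subcrit$. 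Expanding $(CS^tR)_{ij}\ge c_{ik}(S^t)_{k\ell}r_{\ell j}$ through the walk interpretations of the CSR terms yields $\lambda(A)^{\otimes t}(CS^tR)_{ij}\ge p(W)-(\alpha+\beta)\lambda(A)$; when $W_2$ is a critical closed walk so that $p(W_2)=t\lambda(A)$, bounding each outer-path edge weight below by $\min_{ij}a_{ij}$ gives the lower estimate $t\lambda(A)-(\alpha+\beta)(\lambda(A)-\min_{ij}a_{ij})$.

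For the upper bound on $(B^t)_{ij}$, any walk of length $t$ in $\digr(B)$ decomposes into a path of length at most $\cabdrive_B$ and cycles of mean at most $\lambda(B)$, hence $(B^t)_{ij}\le t\lambda(B)+\cabdrive_B(\max_{ij}b_{ij}-\lambda(B))$. Combining the two estimates, the target inequality holds as soon as
\[
t(\lambda(A)-\lambda(B))\ge (\alpha+\beta)(\lambda(A)-\min_{ij}a_{ij})+\cabdrive_B(\max_{ij}b_{ij}-\lambda(B)),
\]
so the remaining task is to bound the minimum admissible $\alpha+\beta$ for which a valid construction exists. The three coefficients of \eqref{e:T2a}, \eqref{e:T2b}, \eqref{e:T2c} then appear as the three combinatorial bounds on this length: the Wielandt bound $n^2-n+1$, the Dulmage-Mendelsohn-type bound $\hat{\gamma}(n-1)+n$, and the Charron-Bost-style bound $(\hat{\gamma}-1)\circumf+(\hat{\gamma}+1)\cabdrive$, all of which rest on the same cycle-replacement and reachability arguments developed for the parallel $T_1$ bounds in Theorem~\ref{t:T1ha}.

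The degenerate case $\lambda(B)=\bzero$ is immediate: $\digr(B)$ is then acyclic, so walks in $B$ have length at most $\cabdrive_B$ and $B^t=(\bzero)$ for $t>\cabdrive_B$, giving $T_2(A,B)\le\cabdrive_B+1\le n_B$. For the finite-entry refinements \eqref{e:T2fin} and \eqref{e:T2finSyK}, the assumption that $A$ has no $\bzero$ entries ensures that every pair of nodes of $\digr(A)$ is connected by a path of length at most $n-1$, so $\alpha+\beta\le 2(n-1)$ can be taken directly without invoking the heavier combinatorial bounds; a complementary symmetric decomposition of walks in $B$ (run backwards from $j$) substitutes $\lambda(B)-\min_{ij}b_{ij}$ for the factor $\max_{ij}b_{ij}-\lambda(B)$, producing the sharper \eqref{e:T2fin}.

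The main obstacle is the cyclicity bookkeeping in the construction of $W_1W_2W_3$: the walk $W_2$ must have length exactly $t$ inside $\subcrit$, which restricts the choice of its endpoints $k,\ell$ according to the residue of $t$ modulo $\gamma$, while the outer lengths $\alpha,\beta$ must both be nonnegative multiples of $\gamma$. Producing a construction with small $\alpha+\beta$ despite these alignment constraints is precisely the content of the combinatorial bounds established for the $T_1$ estimates, and their reuse here is what makes Theorem~\ref{t:T2} parallel Theorem~\ref{t:T1ha}.
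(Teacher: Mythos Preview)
Your overall framework is right and matches the paper: upper-bound $(B^t)_{ij}$ by $t\lambda(B)+\cabdrive_B(\max b_{kl}-\lambda(B))$ via path-plus-cycles decomposition (this is the paper's Lemma~\ref{l:BndsBt}), lower-bound $(CS^tR)_{ij}$, and compare. The degenerate case $\lambda(B)=\bzero$ is also handled correctly.

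The gap is in your lower bound on $(CS^tR)_{ij}$. Two concrete issues:

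\textbf{(a)} Your construction needs $W_2$ to be a critical \emph{closed} walk of length exactly~$t$ to get $p(W_2)=t\lambda(A)$. But a closed walk of length~$t$ at~$k\in\subcrit$ exists only when $t$ is a multiple of the cyclicity of the component of~$\subcrit$ containing~$k$; for generic~$t$ you must allow $k\ne\ell$, and then $p(W_2)-t\lambda(A)$ is a nonzero (bounded) error you never account for.

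\textbf{(b)} You assert that the minimal admissible $\alpha+\beta$ is bounded by the same three quantities ($n^2-n+1$, $\hat\gamma(n-1)+n$, $(\hat\gamma-1)\circumf+(\hat\gamma+1)\cabdrive$) that bound the cycle removal threshold~$T_{cr}^\gamma$, but these are different objects: $T_{cr}^\gamma$ shortens a long walk modulo~$\gamma$, whereas your $\alpha,\beta$ are access/exit lengths constrained to be multiples of~$\gamma$. You give no argument linking them. The paper avoids this entirely by invoking the representation Theorem~\ref{t:representation}, which identifies $(CS^tR)_{ij}$ with the supremum of $p(W)$ over \emph{all} walks $i\to j$ through~$\mathcal{N}$ with $l(W)\equiv t\pmod\gamma$; then~$T_{cr}^\gamma$ directly furnishes one such walk of length $\le T_{cr}^\gamma$, giving $(CS^tR)_{ij}\ge T_{cr}^\gamma\cdot\min a_{kl}$ (Lemma~\ref{l:BndsCSR}). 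No $W_1W_2W_3$ decomposition or $\gamma$-divisibility bookkeeping is needed.

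The finite-entry case shows the discrepancy sharply. Your argument gives $\alpha+\beta\le 2(n-1)$, which would produce a coefficient $2(n-1)$, not the coefficient~$2$ in~\eqref{e:T2fin}--\eqref{e:T2finSyK}. (Also, $\alpha,\beta$ still have to be multiples of~$\gamma$, which need not be~$1$ even when all entries are finite.) The paper's device is different: since a critical cycle~$\cycle$ has $p(\cycle^m)=0$, chopping $\cycle^m$ into pieces of length $t-2$ yields one piece $W_0$ with $p(W_0)\ge 0$ (Lemma~\ref{l:nonnegativep}); then $(i,r)\cdot W_0\cdot(s,j)$ is a walk of length exactly~$t$ through the critical graph with weight $\ge 2\min a_{kl}$, so by Theorem~\ref{t:representation} it witnesses $(CS^tR)_{ij}\ge 2\min a_{kl}$.
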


The following theorem generalizes Proposition~5 of~\cite{CBFN-12} and Theorem~3.5.12 of~\cite{SyK:03}.

\begin{theorem}
\label{t:T2v} Let $A\in\Rpnn$ be irreducible, $B$ be
subordinate to~$A$ and $v$ be a vector with only finite entries, i.e.,
$v\in\mathds{R}^n$.

If $\lambda(B)=\bzero$, then $T_2(A,B,v)\le T_2(A,B)\le
\cabdrive_B+1\le n_B$. Otherwise, we have the following bound:
\begin{equation}
\label{e:T2v}
 T_2(A,B,v)\leq \frac{\lVert v\rVert +(n-1)\lVert
A\rVert}{\lambda(A)-\lambda(B)}
\end{equation}

If~$A$ has only finite entries, then we have:
\begin{equation}
\label{e:T2fin:vec}
T_2(A,B,v)\leq \frac{\lVert v\rVert+(\lambda(A)-\min_{ij}a_{ij})
+(\lambda(B)-\min_{ij}b_{ij})}{\lambda(A)-\lambda(B)}
\le\frac{2\|A\|+\|v\|}{\lambda(A)-\lambda(B)}.
\end{equation}
\end{theorem}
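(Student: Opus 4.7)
The proof naturally splits according to whether $\lambda(B)=\bzero$ or $\lambda(B)>\bzero$. If $\lambda(B)=\bzero$, then $\digr(B)$ is acyclic, so every walk in $\digr(B)$ has length at most $\cabdrive_B\leq n_B-1$. Consequently $B^t$ is the max-plus zero matrix for all $t\geq\cabdrive_B+1$, and $(B^tv)_i=\bzero$ is trivially dominated by the (finite) quantity $\lambda(A)^{\otimes t}\otimes(CS^tRv)_i$; finiteness here follows from $v\in\mathds{R}^n$ and from irreducibility of~$A$, which guarantees that every row of $CS^tR$ eventually contains a finite entry. This yields $T_2(A,B,v)\leq\cabdrive_B+1\leq n_B$.

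For the main case $\lambda(B)>\bzero$, the plan is to bound both sides of the inequality $\lambda(A)^{\otimes t}\otimes CS^tRv\geq B^tv$ by affine functions of~$t$ and then compare. The upper bound on $(B^tv)_i$ I would derive by decomposing any walk $W$ in $\digr(B)$ of length~$t$ from~$i$ to~$j$ into a simple path of length $l_p\leq\cabdrive_B\leq n-1$ plus cycles in $\digr(B)$: each cycle has mean at most $\lambda(B)$, so the cycle contribution is at most $(t-l_p)\lambda(B)$ and the path contribution is at most $l_p\cdot\max_{ij}a_{ij}$. Using $\lambda(B)\geq\min_{ij}a_{ij}$, one obtains $p(W)\leq t\lambda(B)+(n-1)\lVert A\rVert$, and summing with $v_j$ and maximising over $j$ gives $(B^tv)_i\leq t\lambda(B)+(n-1)\lVert A\rVert+\max_j v_j$.

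For the lower bound I would exploit irreducibility of~$A$ together with the finiteness of~$v$ to construct a walk of length exactly~$t$ in $\digr(A)$: go from~$i$ to some critical node~$k$ along a shortest path (of length at most $n-1$, with weight at least $l\cdot\min_{ij}a_{ij}$), and then continue by a closed walk inside the critical graph ending at a critical node~$j^*$, using cyclicity of the critical s.c.c.\ to adjust the length of the critical portion without affecting the leading-order terms. By the walk interpretation of $CS^tR$ this witnesses $\lambda(A)^{\otimes t}\otimes(CS^tR)_{ij^*}\geq t\lambda(A)-(n-1)\lVert A\rVert$, and since $v$ is finite,
\[
\lambda(A)^{\otimes t}\otimes(CS^tRv)_i\;\geq\;t\lambda(A)-(n-1)\lVert A\rVert+v_{j^*}.
\]
Subtracting the two estimates and using $v_{j^*}-\max_j v_j\geq-\lVert v\rVert$, the inequality $\lambda(A)^{\otimes t}\otimes(CS^tRv)_i\geq(B^tv)_i$ holds once $t(\lambda(A)-\lambda(B))\geq\lVert v\rVert+(n-1)\lVert A\rVert$, which rearranges to~\eqref{e:T2v}. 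The finite-entry bound~\eqref{e:T2fin:vec} is obtained by the same scheme, with the sharper estimates available when $\digr(A)$ is complete: the reaching path in the lower bound can be a single edge (contributing only $\lambda(A)-\min_{ij}a_{ij}$) and the path excess in the upper bound can be tightened to $\lambda(B)-\min_{ij}b_{ij}$.

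The main technical obstacle is arranging the upper- and lower-bound constructions so that the corrections combine into a single $(n-1)\lVert A\rVert$ term rather than the naïve $2(n-1)\lVert A\rVert$. The key is to couple the endpoint $j^*$ of the critical-routed walk with the endpoint~$j$ realising the maximum in $(B^tv)_i$: by targeting the same (or an equally good) endpoint, one of the two path-excess corrections is absorbed into endpoint-comparison slack bounded by $\lVert v\rVert$, producing the claimed numerator. Handling the cyclicity of the critical s.c.c.\ containing~$k$ when fixing the length of the critical portion is a secondary but routine issue, resolved by a bounded-length adjustment that does not affect the leading constants.
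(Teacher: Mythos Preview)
Your overall scheme, the $\lambda(B)=\bzero$ case, and the finite-entry case are correct and in line with the paper. The genuine gap is in the derivation of~\eqref{e:T2v}. You correctly diagnose that bounding $(B^tv)_i$ from above and $(CS^tRv)_i$ from below by \emph{independent} walk constructions produces $2(n-1)\lVert A\rVert$ in the numerator, but your proposed repair---coupling the terminal node~$j^*$ of the CSR-witness with the endpoint~$j$ of the optimal $B$-walk---does not close that gap. Forcing the two walks to share an endpoint removes the $\lVert v\rVert$ contribution, not an $(n-1)\lVert A\rVert$ contribution: the CSR-witness still has to travel from~$i$ to the critical graph and then from the critical graph to~$j$, and those are two non-critical path segments each of length up to $n-1$. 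A saving of size $\lVert v\rVert$ cannot absorb a term of size $(n-1)\lVert A\rVert$, so the sentence ``one of the two path-excess corrections is absorbed into endpoint-comparison slack'' is not supported by any mechanism you give.

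The paper's coupling is different: it shares a \emph{prefix}, not an endpoint. Reduce the optimal $B$-walk~$V$ to a path $\tilde V=W_1W_1'$ by cycle deletion; choose a node~$k'$ on~$\tilde V$ with a shortest path~$W_2$ to a critical node~$k$; and take the CSR-witness to be $W=W_1\cdot W_2\cdot \cycle^r\cdot W_3$, where~$\cycle$ is a critical cycle at~$k$ and~$W_3$ is the prefix of~$\cycle$ making $l(W)=t$. Now $p(W_1)$ occurs in both $p(V)$ and $p(W)$ and cancels in the comparison. The only segments that must be crudely estimated by edge-weight extrema are $W_1'$, $W_2$, $W_3$ (together with the residual $l(W_1)$ coming from replacing $\lambda(B)$ by $\delta$), and a node-counting argument---the nodes of the path~$\tilde V$, the interior nodes of the shortest path~$W_2$, and the critical nodes of~$W_3$ being distinct---bounds their total length by $n-1$. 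The mismatch between the endpoints of~$W$ and~$V$ is then absorbed by $\lVert v\rVert$, used exactly once.
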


The proofs of Theorems~\ref{t:T2} and~\ref{t:T2v} are deferred to Section~\ref{s:TcrToT2}.

\begin{remark}\label{r:complexityInN}
The bounds on $T_1$ are quadratic in~$n$, but even if one fixes the size of the entries (for instance entries are $-\infty,0$ or~$1$),
the general bounds on~$T_2$ have degree~$4$, because $\frac{1}{\lambda(A)-\lambda(B)}$ can be as big as $\|A\| (n^2-1)/4$.
(Take two cycles with length $(n+1)/2$ and $(n-1)/2$ which both have weight~$1$).

For the same reason, both bounds are quadratic if all entries are finite.
\end{remark}

\section{Comparison to Previous Transience Bounds}\label{s:comparison}

Hartmann and Arguelles~\cite{HA-99} proved one transience bound for irreducible
max-plus matrices and one for irreducible max-plus systems with finite initial
vector.
These two  bounds are, respectively,
\[
\max\left( 2n^2 \ ,\ \frac{2n^2}{\lambda(A) - \lambda(\bharg)} \lVert A\rVert
\right)
\quad
\text{and}
\quad
\max\left( 2n^2 \ ,\ \frac{\lVert v\rVert + n\lVert A\rVert}{\lambda(A) -
\lambda(\bharg)}
\right)
\enspace.
\]
Combination of our bounds in~\eqref{e:T1ctW} and~\eqref{e:T2a},
respectively~\eqref{e:T2v}, yields
bounds that are strictly lower than that of Hartmann and Arguelles.
Note that our results, being more detailed, allow a considerably more
fine-grained
analysis of the transient phase.
For instance,
there exist matrices for which $\lambda(\bct)=\bzero$ but
$\lambda(\bharg)\neq \bzero$
(cf.\ Example~\ref{ex:lambdas}).
Our bounds show in particular that the transients of these matrices and systems are at most
$\wiel(n)$,
which cannot be deduced from previous bounds, including that of Hartmann
and
Arguelles.

Bouillard and Gaujal~\cite{BG-00} and Akian et al.~\cite{AGW-05}
gave transience bounds for irreducible matrices in the case that the
cyclicity of the critical graph is equal to~$1$. They explained how
to extend their bounds to arbitrary cyclicities, but that reduction
involves multiplying the bound by the cyclicity of the critical
graph or its subgraph. Akian et al.~\cite{AGW-05} derive bounds for
the periodicity transient of $\{a_{ij}^{(t)}\}_{t\geq 1}$ for fixed
$i,j$ instead of the whole matrix powers, and show that their
bounding techniques extend to the case of matrices of infinite
dimensions. We discuss the relation of this approach to weak CSR
expansions in more detail in Section~\ref{s:localred}.

Soto y Koelemeijer~\cite{SyK:03} (Theorem 3.5.12) established a transience bound for matrices
whose entries are all finite.
In our notation, it reads
\[
\max\left( 2n^2 \ ,\ \frac{2\lVert A\rVert}{\lambda(A)-\lambda(\bnacht)} + n+1
\right)
\enspace.
\]
Combination of our bounds in~\eqref{e:T1ctW} and~\eqref{e:T2finSyK}
yields a bound of\\ $\max\big( \wiel(n) \,,\, 2\lVert A\rVert /
(\lambda(A) - \lambda(\bct))+\cabdrive_B\big)$, which is strictly lower.
In many cases, it is even better to use~\eqref{e:T2fin}.

Charron-Bost et al.~\cite{CBFN-12} gave two transience bounds for
systems. They also explained how to transform transience bounds for
systems into transience bounds for matrices. Combination of our
bounds~\eqref{e:T1haCB}, \eqref{e:T1haCBEp}, and \eqref{e:T2v}
yields bounds that are strictly lower than those of~\cite{CBFN-12}.

\section{Proof Strategy}\label{s:ProofS}

In this section, we outline the proof of the bounds on~$T_1$ stated in Theorems~\ref{t:T1ha} and~\ref{t:T1ct}.
Moreover, we provide some general statements that can be used to get a better
bound if more information on the matrix is available.

In all proofs, we assume $\lambda(A)=\bunity$ (replacing~$A$ by
$\lambda(A)^- \otimes A$ if necessary).

The first stage of the proof is the  following representation theorem for $CS^tR$ expansions.
\begin{theorem}[CSR and walks]\label{t:representation}
Let~$A\in\Rpnn$ be a matrix with~$\lambda(A)=\bunity$ and $C,S,R$ be
the CSR terms of~$A$ with respect to some completely reducible
subgraph~$\subcrit$ of the critical graph~$\crit(A)$.

Let~$\gamma$ be a multiple of~$\gamma(\subcrit)$ and~$\mN$ a set of
critical nodes that contains one node of every s.c.c.\ of~$\subcrit$.

Then we have, for any~$i,j$ and~$t\in\Nat$:
\begin{equation}\label{e:representation}
(CS^tR)_{ij}=p\left(\walkslennode{i}{j}{t,\gamma}{\mN}\right)
\end{equation}
where $\walkslennode{i}{j}{t,\gamma}{\mN}:=\left\{W\in\walksnode{i}{j}{\mN}
\,\big|\,l(W)\equiv t\pmod\gamma\right\}$
\end{theorem}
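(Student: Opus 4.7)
The plan is to prove the two inequalities separately. Throughout, I assume $\lambda(A)=\bunity$, and for each s.c.c.\ of $\subcrit$ I let $d$ denote its cyclicity and $m\in\mN$ its representative, so that $d\mid\gamma(\subcrit)\mid\gamma$.

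The first step is a \emph{potential lemma}: every closed walk in $\subcrit$ decomposes into critical cycles of weight $\bunity$, hence has weight $\bunity$; consequently any two walks in the same s.c.c.\ of $\subcrit$ between the same endpoints $u,v$ have the same weight (apply this to $W_1W$ and $W_2W$ for any walk $W:v\to u$ inside the s.c.c.). I therefore define $\phi(v)$ as the common weight of walks $m\to v$ inside the s.c.c.\ containing $v$; every walk $u\to v$ within the s.c.c.\ has weight $\phi(v)-\phi(u)$, and in particular $(S^t)_{kl}=\phi(l)-\phi(k)$ whenever a walk of length $t$ from $k$ to $l$ in $\subcrit$ exists.

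For $(CS^tR)_{ij}\le p(\walkslennode{i}{j}{t,\gamma}{\mN})$, I pick $k,l\in\subcrit$ realising the maximum in the CSR product; both lie in a common s.c.c.\ with representative $m$. Fix walks $W_C:i\to k$ and $W_R:l\to j$ of lengths multiples of $\gamma(\subcrit)$ with weights $c_{ik}$ and $r_{lj}$, and $W_S:k\to l$ in $\subcrit$ of length exactly $t$. Their concatenation has length $\equiv t\pmod{\gamma(\subcrit)}$; to upgrade it to $\equiv t\pmod\gamma$ and to force a visit to $m$, I insert at $k$ a closed walk $P$ inside the s.c.c.\ through $m$ whose length is a multiple of $\gamma(\subcrit)$ satisfying the required congruence modulo $\gamma$. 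Such a length is automatically a multiple of $d$, and exists in every sufficiently large residue class by the cyclicity theorem for strongly connected digraphs. Since $P$ is closed in $\subcrit$ it has weight $\bunity$, so $W_CPW_SW_R\in\walkslennode{i}{j}{t,\gamma}{\mN}$ with weight $c_{ik}(S^t)_{kl}r_{lj}$.

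For the reverse inequality, I fix $W\in\walkslennode{i}{j}{t,\gamma}{\mN}$ passing through some $m\in\mN$ and split $W=W_1W_2$ at the first visit to $m$, with $l(W_1)=t_1$, $l(W_2)=t_2$, and hence $t_1+t_2\equiv t\pmod d$. I select $k,l$ in $m$'s s.c.c.\ of $\subcrit$ so that some walks $P_1:m\to k$ and $P_2:l\to m$ inside the s.c.c.\ have lengths $x_1\equiv -t_1\pmod{\gamma(\subcrit)}$ and $x_2\equiv -t_2\pmod{\gamma(\subcrit)}$ (each taken sufficiently long); the resulting cyclic classes of $k,l$ relative to $m$ are $-t_1$ and $t_2$ modulo $d$, making their difference $\equiv t\pmod d$, so a walk $W_S:k\to l$ in $\subcrit$ of length exactly $t$ exists. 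Then $W_1P_1$ and $P_2W_2$ have lengths multiples of $\gamma(\subcrit)$, and $\phi$ telescopes,
\[
p(W_1)+\phi(k)+\bigl(\phi(l)-\phi(k)\bigr)+\bigl(-\phi(l)\bigr)+p(W_2)=p(W);
\]
combined with $c_{ik}\ge p(W_1P_1)$, $(S^t)_{kl}=\phi(l)-\phi(k)$, and $r_{lj}\ge p(P_2W_2)$, this yields $c_{ik}(S^t)_{kl}r_{lj}\ge p(W)$ and hence $(CS^tR)_{ij}\ge p(W)$. The main technical obstacle is guaranteeing existence of these auxiliary walks of prescribed lengths inside the s.c.c.\ of $\subcrit$, handled by the cyclicity theorem for strongly connected digraphs which yields walks of every sufficiently large length in the correct residue class modulo $d$.
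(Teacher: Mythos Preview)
Your approach via the potential function $\phi$ is a genuinely different route from the paper's. The paper works by direct walk surgery: for the $\geq$ direction it takes a long closed walk $V_3$ in $\subcrit$ at a node $k$, carves out from it a prefix $V_4$ and a length-$t$ segment $W_2$, and pads with powers of $V_3$ and of a critical back-and-forth walk to make all three pieces satisfy the length congruences defining $C$, $S^t$, $R$. You instead observe that every walk between two fixed nodes of an s.c.c.\ of $\subcrit$ has the same weight, so the weight bookkeeping collapses to a telescoping sum in $\phi$. This is cleaner on the weight side and avoids the paper's somewhat intricate concatenation.

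There is, however, a gap in your $\geq$ direction. After selecting $k$ and $l$ in the cyclic classes $-t_1$ and $t_2$ modulo $d$, you write ``so a walk $W_S:k\to l$ in $\subcrit$ of length exactly $t$ exists''. The cyclic-class condition guarantees walks $k\to l$ of every \emph{sufficiently large} length $\equiv t\pmod d$, not of length exactly $t$; for small $t$ and an unlucky choice of $k,l$ within their classes no such walk need exist (take an s.c.c.\ with $d=1$ that is not complete). The repair is easy: do not choose $k$ and $l$ independently. Pick any $k$ in cyclic class $-t_1\pmod d$, then \emph{define} $l$ as the endpoint of an arbitrary walk of length $t$ from $k$ inside the s.c.c.\ (which exists because the s.c.c.\ is strongly connected with at least one edge). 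Then $l$ automatically sits in class $t_2\pmod d$, and your $P_1,P_2$ of the required lengths modulo $\gamma(\subcrit)$ still exist for sufficiently large lengths. The paper sidesteps this issue precisely by cutting $W_2$ of length $t$ out of a pre-existing long walk rather than asserting its existence from congruence data.

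A smaller point: you tacitly assume each $m\in\mN$ lies in $\subcrit$ (you refer to ``$m$'s s.c.c.\ of $\subcrit$''). The hypothesis only places one node of $\mN$ in each s.c.c.\ of $\subcrit$; other nodes of $\mN$ may be critical but outside $\subcrit$. In the $\geq$ direction the paper handles this by working in the s.c.c.\ of $\crit(A)$ containing $m$ and connecting to a node $k\in\subcrit$ there via critical (not necessarily $\subcrit$-) walks. Your potential argument extends to this setting if you define $\phi$ on the relevant s.c.c.\ of $\crit(A)$ rather than of $\subcrit$.
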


The proof of this theorem is deferred to Section~\ref{s:CsrtoWalk}.

Observe that it implies Proposition~\ref{p:purely} as well as the
following corollary.

\begin{corollary}\label{c:csr-indep}
$CS^tR$ depends only on the set of s.c.c.'s of~$\crit(A)$
intersecting with~$\subcrit$.
\end{corollary}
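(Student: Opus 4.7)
The plan is to invoke Theorem~\ref{t:representation} to rewrite both CSR terms as optimal walk weights and then show that the resulting suprema coincide. Let $\subcrit_1,\subcrit_2$ be two completely reducible subgraphs of $\crit(A)$ intersecting the same set $\mathcal{S}$ of s.c.c.'s of $\crit(A)$, and denote their associated CSR terms by $(C_\ell,S_\ell,R_\ell)$ for $\ell=1,2$. Fix $\gamma$ to be any common multiple of $\gamma(\subcrit_1)$ and $\gamma(\subcrit_2)$ and choose, for each $\ell$, a set $\mN_\ell$ containing one critical node from every s.c.c.\ of $\subcrit_\ell$. Theorem~\ref{t:representation} then yields
\[
(C_\ell S_\ell^t R_\ell)_{ij}=p\bigl(\walkslennode{i}{j}{t,\gamma}{\mN_\ell}\bigr)\qquad(\ell=1,2),
\]
so it suffices to show that these two walk-weight suprema are equal.

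To establish this equality I would use a symmetric walk-rerouting construction, working under the convention $\lambda(A)=\bunity$ adopted in Section~\ref{s:ProofS}, so that every edge of $\crit(A)$ carries weight $\bunity$. Given an arbitrary $W\in\walkslennode{i}{j}{t,\gamma}{\mN_1}$ passing through some $k_1\in\mN_1$, let $K$ be the s.c.c.\ of $\crit(A)$ containing $k_1$. By hypothesis $K\in\mathcal{S}$, so $\mN_2\cap K$ is nonempty and contains some $k_2$. At the visit to $k_1$ in $W$, I would splice in a closed walk $Z$ staying inside $\crit(A)$, starting and ending at $k_1$, visiting $k_2$, and of length divisible by $\gamma$. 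The modified walk $W'$ then belongs to $\walkslennode{i}{j}{t,\gamma}{\mN_2}$, has length $\equiv t\pmod{\gamma}$, and satisfies $p(W')=p(W)$ because the inserted edges all have weight $\bunity$. Swapping the roles of $\subcrit_1$ and $\subcrit_2$ yields the reverse inequality and hence equality.

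The main obstacle is constructing the auxiliary walk $Z$. It rests on two observations. First, by monotonicity of the gcd of cycle lengths under subgraph inclusion, $\gamma(K)$ divides the cyclicity of every s.c.c.\ of $\subcrit_\ell$ contained in $K$, and hence divides $\gamma(\subcrit_\ell)$; in particular $\gamma(K)\mid\gamma$. Second, by the strong connectivity of $K$ and the standard fact that in a strongly connected digraph there exist closed walks through any fixed pair of nodes of every sufficiently large length in the residue class $0\pmod{\gamma(K)}$, such closed walks exist in $\crit(A)$ of some length divisible by $\gamma$. This produces the required $Z$ and completes the plan.
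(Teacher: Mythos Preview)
Your argument is correct and follows the same route the paper intends: both derive the corollary from the walk representation in Theorem~\ref{t:representation}. The paper gives no explicit proof beyond ``observe that it implies\ldots'', so what you have written is a legitimate filling-in of the details.

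One remark on economy: your rerouting construction essentially reproves a fragment of Theorem~\ref{t:representation}. A shorter derivation is available by exploiting the freedom that theorem already grants in the choice of~$\gamma$ and~$\mN$. Since the right-hand side $p\bigl(\walkslennode{i}{j}{t,\gamma}{\mN}\bigr)$ depends only on the pair $(\gamma,\mN)$ and not on~$\subcrit$ itself, it suffices to exhibit a \emph{single} pair valid for both~$\subcrit_1$ and~$\subcrit_2$. Taking $\gamma$ to be a common multiple of $\gamma(\subcrit_1)$ and $\gamma(\subcrit_2)$, and $\mN=\mN_1\cup\mN_2$, one checks (using that $\subcrit_1$ and $\subcrit_2$ meet the same s.c.c.'s of~$\crit(A)$) that this $\mN$ satisfies the hypothesis of Theorem~\ref{t:representation} for each~$\subcrit_\ell$; equality of the two CSR terms is then immediate. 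Your approach reaches the same conclusion via an explicit weight-preserving bijection between walk sets, which is perfectly valid but slightly longer.
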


Let $\subcrit_1,\ldots,\subcrit_l$ be the s.c.c. of $\crit(A)$ with
node sets $N_1,\ldots N_l$, and let $C_{\subcrit_1}$,
$S_{\subcrit_1}$, $R_{\subcrit_1}$ be the CSR terms defined with
respect to $\subcrit_1$.  For $\nu=2,\ldots,l$, we define a
subordinate matrix $A^{(\nu)}$ by setting the entries of $A$ with
rows and columns in $N_1\cup\ldots\cup N_{\nu-1}$ to $\0$, and let
$C_{\subcrit_\nu}$, $S_{\subcrit_\nu}$, $R_{\subcrit_\nu}$ be the
CSR terms defined with respect to $\subcrit_\nu$ in $A^{(\nu)}$.

\begin{corollary}\label{c:representation}
If $\subcrit_1,\cdots,\subcrit_l$ are the s.c.c.'s of~$\crit(A)$,
then we have:
\begin{equation}\label{e:early-exp}
C S^t R=\bigoplus_{\nu=1}^lC_{\subcrit_\nu} S_{\subcrit_\nu}^t R_{\subcrit_\nu}.
\end{equation}
\end{corollary}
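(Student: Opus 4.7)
The plan is to apply Theorem~\ref{t:representation} to both sides of the claimed identity and to match the resulting walk families combinatorially. As usual, I normalize to $\lambda(A)=\bunity$. For the left-hand side I invoke Theorem~\ref{t:representation} with $\subcrit=\crit(A)$, take $\mN:=N_1\cup\cdots\cup N_l$ to be the \emph{full} set of critical nodes (which trivially contains one node of every s.c.c.\ of $\crit(A)$), and set $\gamma=\hat\gamma:=\gamma(\crit(A))$. This expresses $(CS^tR)_{ij}$ as the maximum weight over those $(i,j)$-walks in $\digr(A)$ that visit at least one critical node and whose length is congruent to $t$ modulo $\hat\gamma$.

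For each $\nu\in\{1,\ldots,l\}$ I want to apply the same theorem to $A^{(\nu)}$ with $\subcrit=\subcrit_\nu$, $\mN=N_\nu$, and $\gamma=\hat\gamma$. This requires the auxiliary observation $\lambda(A^{(\nu)})=\lambda(A)$ and $\crit(A^{(\nu)})=\subcrit_\nu\cup\cdots\cup\subcrit_l$: any cycle of $\digr(A^{(\nu)})$ is a cycle of $\digr(A)$, hence has mean at most $\lambda(A)$, while the components $\subcrit_\nu,\ldots,\subcrit_l$ of $\crit(A)$ survive in $\digr(A^{(\nu)})$ because they avoid $N_1\cup\cdots\cup N_{\nu-1}$. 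Since $\hat\gamma$ is a multiple of $\gamma(\subcrit_\nu)$, Theorem~\ref{t:representation} applies to $A^{(\nu)}$ and identifies $(C_{\subcrit_\nu}S_{\subcrit_\nu}^tR_{\subcrit_\nu})_{ij}$ with the maximum weight over those $(i,j)$-walks in $\digr(A)$ that visit $N_\nu$, avoid $N_1\cup\cdots\cup N_{\nu-1}$, and have length congruent to $t$ modulo $\hat\gamma$.

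To conclude, I partition the walks underlying the LHS into the $l$ walk families underlying the RHS: for any walk $W$ visiting $\mN$, let $\nu^*(W):=\min\{\nu : W \text{ visits } N_\nu\}$; then by construction $W$ visits $N_{\nu^*(W)}$ while avoiding $N_1\cup\cdots\cup N_{\nu^*(W)-1}$, so it belongs to the $\nu^*(W)$-th RHS family. Conversely every walk in the $\nu$-th RHS family visits $N_\nu\subseteq\mN$. Taking maxima of walk weights on each side and forming the max-plus sum over $\nu$ yields the desired identity. The only delicate step is the auxiliary identification of $\crit(A^{(\nu)})$, which guarantees that $C_{\subcrit_\nu},S_{\subcrit_\nu},R_{\subcrit_\nu}$ are genuine CSR terms of $A^{(\nu)}$ with respect to a completely reducible subgraph of its critical graph, so that Theorem~\ref{t:representation} may legitimately be invoked; the rest is a clean combinatorial matching.
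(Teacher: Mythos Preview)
Your argument is correct and follows essentially the same route as the paper: both proofs invoke Theorem~\ref{t:representation} on each side and then partition the walk family $\walkslennode{i}{j}{t,\gamma}{\crit(A)}$ according to the least index~$\nu$ whose component~$N_\nu$ is visited. You are simply more explicit than the paper about the auxiliary verification that $\lambda(A^{(\nu)})=\lambda(A)$ and $\subcrit_\nu\subseteq\crit(A^{(\nu)})$, which the paper leaves to the reader.
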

\begin{proof}
Using Theorem~\ref{t:representation}, observe that the set of walks
$\walkslennode{i}{j}{t,\gamma}{\crit(A)}$, where $\gamma$ is the
cyclicity of $\crit(A)$, can be decomposed into the sets
$\mathcal{W}_{\nu}$ consisting of walks in
$\walkslennode{i}{j}{t,\gamma}{\subcrit_\nu}$ that do not visit any
node of $\subcrit_1,\ldots,\subcrit_{\nu-1}$, for $\nu=1,\ldots l$
(in particular,
$\mathcal{W}_1=\walkslennode{i}{j}{t,\gamma}{\subcrit_1}$).
\end{proof}

Corollary~\ref{c:representation}, which will be useful in the final
section of the paper, and Corollary~\ref{c:T1ha} are different examples of the CSR decomposition schemes considered by
Sergeev and Schneider~\cite{SS-11}.

If $\digr$ is the graph defining~$B$ in~\eqref{e:CSRschemes}, it contains~$\crit(A)$ and by the optimal walk interpretation~\eqref{e:walksense1}, we have:
\begin{equation*}
\begin{split}
a^{(t)}_{ij}&=b^{(t)}_{ij}\oplus
p\left(\walkslennode{i}{j}{t}{\digr}\right)\\
&= b^{(t)}_{ij}\oplus
p\left(\walkslennode{i}{j}{t}{\crit(A)}\right)\oplus
p\left(\walkslennode{i}{j}{t}{\digr}\setminus
\walkslennode{i}{j}{t}{\crit(A)}\right).
\end{split}
\end{equation*}

The proof that~$T_1(A,B)\le T$ has two parts:
\begin{enumerate}
 \item\label{SdP}
 Scheme-dependent part: show that for~$t\ge T$ we have
 \begin{equation}\label{e:SdP}
p\left(\walkslennode{i}{j}{t}{\digr}\setminus \walkslennode{i}{j}{t}{\crit(A)}\right)\le p\left(\walkslennode{i}{j}{t,\gamma}{\crit(A)}\right).
 \end{equation}
 \item\label{SiP}
 Scheme-independent part: show that for~$t\ge T$ we have
  \begin{equation}\label{e:SiP}
  p\left(\walkslennode{i}{j}{t,\gamma}{\crit(A)}\right)\le p\left(\walkslennode{i}{j}{t}{\crit(A)}\right).
 \end{equation}
\end{enumerate}

By Theorem~\ref{t:representation}, we have $\displaystyle
p\left(\walkslennode{i}{j}{t}{\crit(A)}\right)\le
p\left(\walkslennode{i}{j}{t,\gamma}{\crit(A)}\right)=(CS^tR)_{ij}.$
Thus, \eqref{e:SdP} implies $A^t\le B^t\oplus CS^tR$, while
\eqref{e:SiP} implies~$A^t\ge B^t\oplus CS^tR$.

Let us go deeper into the strategy for each part.

\begin{enumerate}
 \item The scheme-dependent part goes as follows
 \begin{enumerate}
  \item For~$B=\bnacht$, $\digr=\crit(A)$ and there is nothing to prove.
  \item For~$B=\bharg$, we take a walk~$W$ with maximal weight
  in~$\walkslennode{i}{j}{t}{\digr}\setminus
  \walkslennode{i}{j}{t}{\crit(A)}$ and a closed walk~$V$
  from a node of~$W$ to~$\crit(A)$ and back whose edges have
  weight greater than or equal to the greatest weight of the
  edges in~$W$. Then, we insert $V^{\gamma(\crit(A))}$
  (i.e., $\gamma(\crit(A))$ copies of $V$) in~$W$, and
  remove as many cycles of the new walk as possible,
  preserving the length modulo $\gamma(\crit(A))$ until we
  get a walk~$\tilde{W}$ with length at most~$t$.

 As a result, we thus replaced some edges of~$W$ by edges
 with greater weight and removed other edges, so
 $p(\tilde{W})\ge p(W)$.


  \item For~$B=\bct$, we also take a walk~$W$ with maximal weight in~$\walkslennode{i}{j}{t}{\digr}\setminus \walkslennode{i}{j}{t}{\crit(A)}$
but now we replace some cycles of~$W$ by some copies of a
cycle with greater mean weight, to get a new walk with
length~$t$. We therefore introduce the concept of a ``staircase'' of cycles,
and Lemma~\ref{l:staircase} will ensure us that we
can iterate this process and eventually reach a critical
node.

Note that we need to remove cycles before we replace them and to have some steps with non-critical cycles, which explains why the bound for~$T_1(A,\bct)$ are larger than the one for~$T_1(A,\bharg)$ and~$T_1(A,\bnacht)$.
However, the worst case remains~$\wiel(n)$.
\end{enumerate}

 \item
By Theorem~\ref{t:representation}, to have~\eqref{e:SiP}, it is enough to prove
that for each s.c.c.~$H$ of~$\crit(A)$ there is a~$\gamma\in\Nat$ and a set
of nodes~$\mN\subset H$ such that
\begin{equation}\label{e:WalkEq}
p\left(\walkslennode{i}{j}{t}{H}\right)\ge p\left(\walkslennode{i}{j}{t,\gamma}{\mN}\right).
\end{equation}

To ensure that Equation~\eqref{e:WalkEq} is satisfied, we use the following steps:
\begin{enumerate}
 \item\label{step1} For each s.c.c.~$H$ of~$\crit(A)$, choose
$\mN\subset H$ and $\gamma$ a multiple of~$\gamma(H)$
 and take a walk~$W$ such that $p(W)=p(\walkslennode{i}{j}{t,\gamma}{\mN})$.
 \item\label{step2} Remove as many cycles as possible from~$W$, keeping it in~$\walkslennode{i}{j}{t,\gamma}{\mN}$.
 \item\label{step3} Insert critical cycles so that the new walk has length~$t$.
\end{enumerate}
Since~$\lambda(A)\le \1$, steps~\ref{step2} and~\ref{step3}
cannot strictly increase the weight of the walk,
so~\eqref{e:WalkEq} is satisfied.

\end{enumerate}

It is clear from the strategy that the main point is to remove closed walks from a given walk, while preserving the length modulo some given integer.
This will be the subject of Section~\ref{s:Tcr}. We will use three different
tactics, one of them is completely new.
Different bounds depending on different parameters arise from
different choices of~$\mN$ and~$\gamma$ in step~\ref{step1} and different
tactics in step~\ref{step2}.
To reach the (optimal) Wielandt number~$\wiel(n)$, we have to combine two of
them.

To state general results, we introduce two graph-theoretic
quantities.
\begin{definition}\label{def:TcrEp}
Let~$\digr$ be a subgraph of~$\digr(A)$ and~$\gamma\in\Nat$.
\begin{enumerate}
 \item The {\em cycle removal threshold}~$T_{cr}^\gamma(\subcrit)$,
(resp.\ the {\em strict cycle removal
threshold~$\tilde{T}^\gamma_{cr}(\subcrit)$}) of~$\subcrit$ is
the smallest nonnegative integer~$T$ for which the following
holds: for all walks~$W\in\walksnode{i}{j}{\subcrit}$ with
length~$\geq T$, there is a walk
$V\in\walksnode{i}{j}{\subcrit}$ obtained from~$W$ by removing
cycles (resp.\ at least one cycle) and possible inserting cycles
of~$\subcrit$ such that $l(V)\equiv l(W) \pmod{\gamma}$ and
$l(V)\le T$.
 \item The {\em exploration penalty}~$\ep^\gamma(i)$ of a node~$i\in\crit(A)$
is the least~$T\in\Nat$ such that for any multiple~$t$
of~$\gamma$ greater or equal to~$T$, there is a closed walk
on~$\crit(A)$ with length~$t$ starting at~$i$.

The {\em exploration penalty}~$\ep^\gamma(\subcrit)$
of~$\subcrit\subseteq\crit(A)$ is the maximum of
the~$\ep^\gamma(i)$ for~$i\in\subcrit$. We further set
$\ep((\crit(A))=\max_l\ep^{\gamma(\crit_l)}(\crit_l)$, which is
the quantity used in Theorem~\ref{t:T1ha}.
\end{enumerate}
\end{definition}

Obviously,
$T^\gamma_{cr}(\subcrit)\le\tilde{T}^\gamma_{cr}(\subcrit)\le
T^\gamma_{cr}(\subcrit)+1$ but it will be useful to have both
definitions.

Bounds on~$\ep^\gamma$ are given in Section~\ref{s:Ep} while~$T^\gamma_{cr}$ is investigated in Section~\ref{s:Tcr}
We can already notice the following.

First, $\ep^\gamma(i)$ is finite if and only if $\gamma$ is a
multiple of the cyclicity of its s.c.c.\ in~$\crit(A)$. Second,
if~$\gamma$ is multiplied by an integer, then $\ep^\gamma(i)$
decreases  but $T_{cr}^\gamma(\subcrit)$ increases. Third, for
fixed~$\gamma$, $\ep^\gamma(\subcrit)$ decreases when~$\subcrit$
increases. Finally, $\ep^\gamma(i)=0$ if and only if there is a
critical closed walk with length~$\gamma$ at~$i$. Especially, for
any cycle~$\cycle$, we have
$$\ep^{l(\cycle)}(\cycle)=0.$$

This gives two extremal choices for~$\subcrit$ and~$\gamma$:
either $\subcrit$ is a s.c.c.\ of~$\crit(A)$ and~$\gamma$ is its cyclicity,
or $\subcrit$ is a critical cycle and $\gamma$~is its length.

The first choice is used in~\cite{BG-00}, the second one
in~\cite{HA-99} and both choices in~\cite{CBFN-12}. Here we
systematically test those two choices. The first one is used to
prove the bounds in Theorem~\ref{t:T1ha} that depend on
$\ep(\crit(A))$. The second one is used for the other bounds
on~$T_1$.

If other choices prove to be useful under additional assumptions on~$\digr(A)$,
one can apply Proposition~\ref{p:TcrToT1} with other parameters.

The strategy explained in this section leads to the following
proposition, which implies Theorems~\ref{t:T1ha} and~\ref{t:T1ct},
except for~\eqref{e:T1ctW}.


\begin{proposition}[From cycle removal to Weak CSR]\label{p:TcrToT1}
Let $A$ be a square matrix  and $\subcrit$ be a representing
subgraph of~$\crit(A)$ with s.c.c.'s $\subcrit_1,\cdots,\subcrit_m$,
and let $\gamma_l$ be multiples of $\gamma(\subcrit_l)$.
\begin{itemize}
\item[{\rm (i)}] If $B=\bnacht$ or~$B=\bharg$, then $T_1(A,B)\le \max_l
\left(T_{cr}^{\gamma_l}(\subcrit_l)-\gamma_l+1+\ep^{\gamma_l}(\subcrit_l)\right).$

\item[{\rm (ii)}] If $B=\bct$, then $T_1(A,B)\le
\max\left\{\tilde{T}^{l(\cycle)}_{cr}(\cycle)\mid \cycle\textnormal{ cycle in
$\ctgr$}\right\}$
\end{itemize}
\end{proposition}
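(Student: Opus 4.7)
The plan is to execute the two-step strategy of Section~\ref{s:ProofS} walk-by-walk. After normalising $\lambda(A)=\bunity$, Theorem~\ref{t:representation} together with Corollary~\ref{c:representation} rewrites
\[
(CS^tR)_{ij}=\bigoplus_{l=1}^m p\bigl(\walkslennode{i}{j}{t,\gamma_l}{\mN_l}\bigr),
\]
where $\mN_l$ is a single chosen node of $\subcrit_l$. The identity $A^t=CS^tR\oplus B^t$ then splits, entry by entry, into~\eqref{e:SdP} (scheme-dependent upper bound: every walk of length $t$ passing through the forbidden subgraph $\digr$ is dominated by a representative walk) and~\eqref{e:SiP} (scheme-independent lower bound: every representative walk of length $\equiv t \pmod{\gamma_l}$ is dominated by a walk of length exactly $t$ in $\crit(A)$), one copy per s.c.c.\ $\subcrit_l$.

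The scheme-independent lower inequality is what drives the $T^{\gamma_l}_{cr}(\subcrit_l)-\gamma_l+1+\ep^{\gamma_l}(\subcrit_l)$ term and is common to both~(i) and~(ii). Starting from a maximum-weight walk $W\in\walksnode{i}{j}{\mN_l}$ with $l(W)\equiv t\pmod{\gamma_l}$, I use the cycle removal threshold to produce $V\in\walksnode{i}{j}{\subcrit_l}$ with $l(V)\leq T^{\gamma_l}_{cr}(\subcrit_l)$ and $l(V)\equiv t\pmod{\gamma_l}$; because $\lambda(A)=\bunity$ and $\subcrit_l\subseteq\crit(A)$, removal of arbitrary cycles only raises weight and insertion of cycles in $\subcrit_l$ is weight-neutral, so $p(V)\geq p(W)$. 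Then I splice in a critical closed walk of length $t-l(V)$, a nonnegative multiple of $\gamma_l$, at the $\mN_l$-node visited by $V$, in order to attain length exactly~$t$. This splice is possible as long as $t-l(V)$ is either zero or at least $\ep^{\gamma_l}(\subcrit_l)$. The lower bound on $t$ is calibrated to force this in every residue class modulo $\gamma_l$: the $-\gamma_l+1$ summand absorbs the $\gamma_l-1$ of wiggle room between the cap $T^{\gamma_l}_{cr}(\subcrit_l)$ and the largest admissible value of $l(V)$ in the correct congruence class.

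For the scheme-dependent upper inequality I handle the three schemes separately. For $\bnacht$ the forbidden graph equals $\crit(A)$, and any walk of length $t$ hitting a node $k\in\subcrit_l$ can be rerouted through the representative in $\mN_l$ via a weight-preserving critical subpath that preserves length modulo $\gamma_l$, placing it in $\walkslennode{i}{j}{t,\gamma_l}{\mN_l}$. For $\bharg$, into a maximum-weight walk $W$ visiting $\hagr$ I insert a closed detour to $\crit(A)$ composed of $\gamma(\crit(A))$ copies of a closed walk each of whose edges dominates the heaviest removed edge; such a detour exists by the cycle cover property of max-balancing (Theorem~\ref{schsch}). Applying $T^{\gamma_l}_{cr}$ to the inflated walk strips it back to length $t$ while preserving the congruence and reduces the situation to the Nachtigall case. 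For~(ii), I iterate the cycle-replacement step of Lemma~\ref{l:staircase}: each non-critical cycle of $W$ is swapped for copies of a strictly better mean-weight cycle in $\ctgr$, the staircase structure ensuring that the iteration progresses until a critical node is reached. The strict variant $\tilde{T}^{l(\cycle)}_{cr}(\cycle)$ is exactly what guarantees each rung strictly shortens the non-critical part, and no additive $\ep$ term is needed because the cycle $\cycle$ itself supplies a closed walk of length $l(\cycle)$ at each of its nodes.

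The principal obstacle is the length-parity bookkeeping in the lower inequality, where I must verify that $t-l(V)$ can be steered into $\{0\}\cup[\ep^{\gamma_l}(\subcrit_l),\infty)$ in every residue class modulo $\gamma_l$; this is precisely the role of the $-\gamma_l+1$ correction. A secondary subtlety, specific to~(ii), is showing that the staircase iteration cannot stall before reaching $\crit(A)$, which is exactly the reason the strict rather than the non-strict cycle removal threshold is required there.
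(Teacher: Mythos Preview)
Your proposal is correct and follows essentially the same route as the paper: Lemma~\ref{l:SiP} for the scheme-independent half (cycle removal down to length $\leq T_{cr}^{\gamma_l}(\subcrit_l)$, then insertion of a critical closed walk of length $t-l(V)$, with the $-\gamma_l+1$ term absorbing the residue-class slack), Lemma~\ref{l:SdPHA} for the Hartmann--Arguelles detour via max-balancing, and Lemma~\ref{l:SdPct} with the staircase of Lemma~\ref{l:staircase} for the cycle-threshold scheme. Two minor cosmetic differences: the paper inserts $\gamma_l$ rather than $\gamma(\crit(A))$ copies of the detour $V$ in the Hartmann--Arguelles step (so that the subsequent application of $T_{cr}^{\gamma_l}$ preserves the right congruence), and it disposes of the Nachtigall case in one line by observing that $\digr=\crit(A)$ makes the set $\walkslennode{i}{j}{t}{\digr}\setminus\walkslennode{i}{j}{t}{\crit(A)}$ empty, whereas you re-derive the inclusion via Theorem~\ref{t:representation}; both are fine.
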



This proposition is proved in Section~\ref{s:TcrToT1}. 
The bounds of Theorem~\ref{t:T1ha} with~$\hat{\gamma}(\crit(A))$
and~$\ep(\crit(A)$ can be improved if one knows more of the
structure
of~$\crit(A)$.

\section{Proof of Theorem~\ref{t:representation}}\label{s:CsrtoWalk}
Let~$A,\subcrit,\mN, \gamma, t, i, j$
be as in the statement of Theorem~\ref{t:representation}.

We first prove:
\begin{equation}\label{e:representation1}
 \left(CS^tR\right)_{ij}\le p\left(\walkslennode{i}{j}{t,\gamma}{\mN}\right).
\end{equation}
By definition of~$C$, $S$ and~$R$, there are walks $W_1$, $W_2$ and $W_3$ such that $\left(CS^tR\right)_{ij}=p(W_1W_2W_3)$ and
\begin{equation}\label{e:WalkCSR}
l(W_1)\equiv l(W_3)\equiv 0\pmod{\gamma(\subcrit)} \textnormal{, }W_2\subset \subcrit\textnormal{ and } l(W_2)=t.
\end{equation}

Let~$k$ be the start node of~$W_2$.
By hypotheses, $k$ is critical and there is a node~$l$ of~$\mN$ in the same s.c.c.~$H$ of~$\crit(A)$ as~$k$.
Thus there are walks~$W_4$ and $W_5$ with only critical edges, going from~$k$ to~$l$ and from~$l$ to~$k$ respectively.
Thus, $W_4W_5$ is a circuit of~$\crit(A)$ and $p(W_4)+p(W_5)=0$.

Let~$G$ be the s.c.c.\ of~$k$ in~$\subcrit$. As $G\subseteq H$,
$\gamma(H)$ divides $\gamma(G)$, thus also $\gamma(\subcrit)$ and
$\gamma$. Hence $\gamma(H)$ divides $l(W_1)$ and $l(W_3)$. It also
divides~$l(W_4W_5)$ and we have 
$$L=l(W_1)+l(W_3) +l(W_4)+l(W_5)\equiv 0 \pmod{\gamma(H)}.$$
Therefore, for $m\in\Nat$ large enough, there is a closed walk~$W_6$ on~$H$ starting at~$k$ with length $m\gamma -L$.

Set $W=W_1W_4W_6W_5W_2W_3$. By construction $W\in \walkslennode{i}{j}{t,\gamma}{\mN}$
and $p(W)=p(W_1W_2W_3)=\left(CS^tR\right)_{ij}$,
so~\eqref{e:representation1} is proved.

It remains to show:
\begin{equation}\label{e:representation2}
 \left(CS^tR\right)_{ij}\ge p\left(\walkslennode{i}{j}{t,\gamma}{\mN}\right).
\end{equation}

By definition of~$\walkslennode{i}{j}{t,\gamma}{\mN}$ there are a node $l\in\mN$ and two walks~$V_1$ and~$V_2$
going from~$i$ to~$l$ and from~$l$ to~$j$ respectively such that $l(V_1V_2)\equiv t \pmod{\gamma}$ and
$p(V_1)+p(V_2)=p\left(\walkslennode{i}{j}{t,\gamma}{\mN}\right)$.

Let $k$ be a node of~$\subcrit$ in the same s.c.c~$H$ of~$\crit(A)$
as~$l$. As above, there are critical walks~$W_4$ and $W_5$, going
from~$k$ to~$l$ and from~$l$ to~$k$ respectively and $\gamma(H)$
divides~$\gamma$.

Let~$V_3$ be a closed walk in~$\subcrit$ with start node~$k$, whose length is $\ge t+\gamma$.
Let~$V_4$ be its shortest prefix such that 
$l(V_1)+l(W_5)+l(V_4)\equiv 0 \pmod{\gamma}$ and $V_5$ be the
complementary (i.e., $V_3=V_4V_5$). Let $W_2$ be the prefix
of~length~$t$ of~$V_5$ and $V_6$ be its complementary ($V_5=W_2V_6$,
$V_3=V_4W_2V_6$).

Set $W_1=V_1W_5V_4$ and $W_3=V_6V_3^{(\gamma-1)}(W_4W_5)^{(\gamma-1)}W_4V_2$.
By construction $W_1,W_2$ satisfy~\eqref{e:WalkCSR}.
Moreover, we have
$$W_1W_2W_3=V_1W_5V_4W_2V_6V_3^{(\gamma-1)}(W_4W_5)^{(\gamma-1)}W_4V_2=V_1W_5V_3^{\gamma}(W_4W_5)^{(\gamma-1)}W_4V_2$$
so 
$l(W_1W_2W_3)\equiv l(V_1)+l(V_2)\equiv 0\pmod \gamma$ and~$W_3$
also satisfies~\eqref{e:WalkCSR}.

On the other hand $W_5V_3^{\gamma}(W_4W_5)^{(\gamma-1)}W_4$ is a critical closed walk, so it has weight~$0$ and
$p(W_1W_2W_3)=p(V_1)+p(V_2)=p\left(\walkslennode{i}{j}{t,\gamma}{\mN}\right)$,
so~\eqref{e:representation2} is proved.


\section{Proof of Proposition~\ref{p:TcrToT1}}\label{s:TcrToT1}
In this section, we prove Proposition~\ref{p:TcrToT1}, following the
strategy described in Section~\ref{s:ProofS}.

\subsection{Scheme independent part}

In this section, we prove the following lemma.

\begin{lemma}[Scheme independent part]\label{l:SiP}
Let $A$ be a square matrix with $\lambda(A)=0$ and $\subcrit$ be a
representing subgraph of~$\crit(A)$ with s.c.c.\
$\subcrit_1,\ldots,\subcrit_m$ and $\gamma_l$ be multiples
of~$\gamma(\subcrit_l)$, for $l=1,\ldots, m$.

For any $t\ge \max_l
\left(T_{cr}^{\gamma_l}(\subcrit_l)-\gamma_l+1+\ep^{\gamma_l}(\subcrit_l)\right)$
and any~$i,j$, inequality~\eqref{e:SiP}, with $\subcrit$ instead of
$\crit(A)$, holds for~$\gamma=\operatorname{lcm}_l \gamma_l$.
\end{lemma}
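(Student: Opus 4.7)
The proof realizes the scheme-independent strategy from Section~\ref{s:ProofS}. Given any walk $W \in \walkslennode{i}{j}{t,\gamma}{\subcrit}$, my goal is to produce $V' \in \walkslennode{i}{j}{t}{\subcrit}$ with $p(V') \ge p(W)$, which establishes the inequality. The arithmetic engine is the hypothesis $\lambda(A) = 0$: every cycle of $\digr(A)$ has nonpositive weight (so removing cycles never decreases weight) and every cycle of $\crit(A)$ has weight exactly zero (so inserting critical cycles preserves weight).

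The construction proceeds in three steps. First, I identify an s.c.c.\ $\subcrit_l$ of $\subcrit$ visited by $W$ at some node $k$ and, if $l(W) < T_{cr}^{\gamma_l}(\subcrit_l)$, pad $W$ at $k$ with a critical closed walk of length $N\gamma$; such a walk exists for $N$ large enough by the definition of $\ep^{\gamma_l}(k)$ together with $\gamma_l \mid \gamma$. The padded walk $\tilde W$ then satisfies $l(\tilde W) \ge T_{cr}^{\gamma_l}(\subcrit_l)$, $l(\tilde W) \equiv t \pmod{\gamma}$, $p(\tilde W) = p(W)$, and still visits $\subcrit_l$. Second, I apply $T_{cr}^{\gamma_l}(\subcrit_l)$ to $\tilde W$, obtaining $V \in \walksnode{i}{j}{\subcrit_l}$ with $l(V) \le T_{cr}^{\gamma_l}(\subcrit_l)$, $l(V) \equiv t \pmod{\gamma_l}$, and $p(V) \ge p(W)$: removed cycles contribute nonpositively and any cycles of $\subcrit_l \subseteq \crit(A)$ that may be inserted by the cycle-removal step are weightless.

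Third, I pad $V$ up to length exactly $t$ by inserting at a node $k' \in V \cap \subcrit_l$ a critical closed walk of length $t - l(V)$. This length is a nonnegative multiple of $\gamma_l$ by construction, and the hypothesis on $t$ gives $t - l(V) \ge \ep^{\gamma_l}(\subcrit_l) - \gamma_l + 1$. The main obstacle is to deduce from these two facts that $t - l(V)$ is either $0$ or at least $\ep^{\gamma_l}(k')$, so that the required critical closed walk actually exists in $\crit(A)$. This rests on the structural observation that $\ep^{\gamma_l}$ is either $0$ or at least $\gamma_l + 1$: because the empty walk always supplies length $0$, any non-admissible multiple of $\gamma_l$ must be positive, and hence at least $\gamma_l$, forcing $\ep^{\gamma_l}$ to have the form $m\gamma_l + 1$ with $m \ge 1$ whenever it is nonzero. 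A short case analysis on whether $\ep^{\gamma_l}(\subcrit_l) = 0$ or $\ge \gamma_l + 1$, combined with the divisibility of $t - l(V)$ by $\gamma_l$, then shows that the $-\gamma_l + 1$ slack in the hypothesis on $t$ is exactly what is needed to make the final padding possible; the resulting walk $V'$ has length $t$, visits $\subcrit_l \subseteq \subcrit$, and satisfies $p(V') \ge p(W)$.
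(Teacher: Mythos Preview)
Your proof follows the same three-step strategy as the paper's: locate an s.c.c.\ $\subcrit_l$ visited by $W$, apply cycle removal to obtain $V$ with $l(V)\le T_{cr}^{\gamma_l}(\subcrit_l)$ and $l(V)\equiv t\pmod{\gamma_l}$, then pad $V$ at a node of $\subcrit_l$ with a critical closed walk of length $t-l(V)$. Your Step~1 (pre-padding $W$ so that $l(\tilde W)\ge T_{cr}^{\gamma_l}(\subcrit_l)$) is unnecessary---if $l(W)\le T_{cr}^{\gamma_l}(\subcrit_l)$ one simply takes $V=W$ and the argument proceeds unchanged; the paper invokes cycle removal directly in this way. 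For the final step the paper argues more tersely, asserting that both $t-l(V)$ and $\ep^{\gamma_l}(\subcrit_l)$ are multiples of $\gamma_l$, so that $t-l(V)\ge \ep^{\gamma_l}(\subcrit_l)-\gamma_l+1$ immediately yields $t-l(V)\ge \ep^{\gamma_l}(\subcrit_l)$; your structural observation that $\ep^{\gamma_l}$ is either $0$ or of the form $m\gamma_l+1$ is a more careful reading of Definition~\ref{def:TcrEp}, but the conclusion and the overall approach are the same.
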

\begin{proof}
Indeed, any walk $W\in\walkslennode{i}{j}{t,\gamma}{\subcrit}$ is in
$\walkslennode{i}{j}{t,\gamma_l}{\subcrit_l}$ for some~$l$. By
definition of~$T_{cr}^{\gamma_l}(\subcrit_l)$, there is a
walk~$V\in\walkslennode{i}{j}{t,\gamma_l}{\subcrit_l}$ with length
at most~$T_{cr}^{\gamma_l}(\subcrit_l)$ and $p(V)\ge p(W)$.

If $t\ge
T_{cr}^{\gamma_l}(\subcrit_l)-\gamma_l+1+\ep^{\gamma_l}(\subcrit_l)$,
then $t-l(V)\ge\ep^{\gamma_l}(\subcrit_l)-\gamma_l+1$. Since
$t-l(V)$ and $\ep^{\gamma_l}(\subcrit_l)$ are both multiples
of~$\gamma_l$, it implies $t-l(V)\ge\ep^{\gamma_l}(\subcrit_l)$, so
there is a closed walk on~$\crit(A)$ with length~$t-l(V)$ at each
node of~$\subcrit_l$. Inserting such a walk in~$V$ where it
reaches~$\subcrit_l$, we get a new walk~$\tilde{W}\in
\walkslennode{i}{j}{t}{\subcrit_l}\subseteq
\walkslennode{i}{j}{t}{\subcrit}$ with $p(\tilde{W})=p(V)\ge p(W)$.
\end{proof}

\subsection{Hartmann and Arguelles scheme}

In this section, we perform step~\ref{SdP} of the strategy in the case
$B=\bharg$.
We prove the following lemma.

\begin{lemma}\label{l:SdPHA}
Let $A$ be a square matrix with $\lambda(A)=0$ and $\subcrit$ be a
representing subgraph of~$\crit(A)$ with s.c.c.\
$\subcrit_1,\cdots,\subcrit_m$ and $\gamma_l$ be multiples
of~$\gamma(\subcrit_l)$ for $l=1,\ldots,m$.

For any $t\ge \max_l
\left(T_{cr}^{\gamma_l}(\subcrit_l)-\gamma_l+1\right)$ and
any~$i,j$, inequality~\eqref{e:SdP} holds
with~$\gamma=\operatorname{lcm}_l \gamma_l$ and~$\digr=\hagr$ (the
graph defining~$\bharg$ in Section~\ref{s:csr}).
\end{lemma}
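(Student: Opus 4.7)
The plan is to fix any $W \in \walkslennode{i}{j}{t}{\hagr} \setminus \walkslennode{i}{j}{t}{\crit(A)}$ and produce $\tilde W \in \walkslennode{i}{j}{t,\gamma}{\crit(A)}$ with $p(\tilde W) \ge p(W)$. After replacing $A$ by its max-balancing (Section~\ref{ss:vis}), each edge has weight $\le 0$ with equality exactly on critical edges, so $w_{\max}(W) := \max_{e \in W} w(e) \le 0$ since $W$ avoids $\crit(A)$.

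First I construct a bridge closed walk $V$ based at some node $k$ of $W$ which visits some $\subcrit_l$ and whose every edge has weight $\ge w_{\max}(W)$. If $w_{\max}(W) > \mu^{ha}$, apply the cycle-cover property to an edge $e^*$ of $W$ realizing $w_{\max}(W)$, obtaining a cycle contained in $\thrha(w_{\max}(W))$ through $e^*$; by the maximality defining $\mu^{ha}$, the s.c.c.\ of this cycle in $\thrha(w_{\max}(W))$ contains a critical node, from which one extends along critical edges of $\crit(A)$ to a node of some $\subcrit_l$ and back. If $w_{\max}(W) \le \mu^{ha}$, route directly from a node $k \in W \cap \hagr$ through its s.c.c.\ in $\thrha(\mu^{ha})$, which by definition of $\hagr$ meets $\crit(A)$, using edges of weight $\ge \mu^{ha} \ge w_{\max}(W)$.

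Next I insert $\gamma_l$ copies of $V$ into $W$ at $k$ to form $W_1$ with $l(W_1) = t + \gamma_l\,l(V) \equiv t \pmod{\gamma_l}$ visiting $\subcrit_l$. The hypothesis $t \ge T_{cr}^{\gamma_l}(\subcrit_l) - \gamma_l + 1$ gives $l(W_1) \ge t + \gamma_l > T_{cr}^{\gamma_l}(\subcrit_l)$, so cycle removal produces $\tilde W_0 \in \walksnode{i}{j}{\subcrit_l}$ with $l(\tilde W_0) \le T_{cr}^{\gamma_l}(\subcrit_l) \le t + \gamma_l - 1$ and $l(\tilde W_0) \equiv t \pmod{\gamma_l}$. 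Any value $\equiv t \pmod{\gamma_l}$ strictly exceeding $t$ is at least $t + \gamma_l$, so we conclude $l(\tilde W_0) \le t$. For the weight estimate, for each edge $e$ let $d(e) = m_{\tilde W_0}(e) - m_W(e)$. If $d(e) > 0$, new copies of $e$ originate either from $V^{\gamma_l}$ or from critical cycles inserted by cycle removal, so $w(e) \ge w_{\max}(W)$; if $d(e) < 0$, then $e$ lies in $W$, so $w(e) \le w_{\max}(W)$. In both cases $d(e)(w(e) - w_{\max}(W)) \ge 0$, whence
\[
 p(\tilde W_0) - p(W) = \sum_e d(e)\,w(e) \;\ge\; w_{\max}(W) \sum_e d(e) = w_{\max}(W)\,(l(\tilde W_0) - t) \;\ge\; 0,
\]
using $w_{\max}(W) \le 0$ and $l(\tilde W_0) \le t$.

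Finally, I promote the congruence from mod $\gamma_l$ to mod $\gamma$ by appending at a $\subcrit_l$-node of $\tilde W_0$ a sufficiently long critical closed walk in the enclosing s.c.c.\ of $\crit(A)$, of length a multiple of $\gamma(\subcrit_l)$ chosen so that the total length lands in the residue class $t \pmod \gamma$; this is possible because $\gamma(\subcrit_l) \mid \gamma$, $l(\tilde W_0) \equiv t \pmod{\gamma(\subcrit_l)}$, and every sufficiently large multiple of the cyclicity of the enclosing critical s.c.c.\ is realized by a critical closed walk. Since the appendage has weight $0$, we obtain $p(\tilde W) = p(\tilde W_0) \ge p(W)$, establishing \eqref{e:SdP}. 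The main obstacle is the construction of $V$ with edge weights at least $w_{\max}(W)$ while still reaching $\crit(A)$, for which the cycle-cover property of the max-balancing and the maximality defining $\mu^{ha}$ are essential; once $V$ is in hand, the modular count and the weight accounting fall out from the hypothesis and from $w_{\max}(W) \le 0$.
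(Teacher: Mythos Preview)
Your proof is correct and follows essentially the same approach as the paper: max-balance, locate a bridge closed walk $V$ from a node of $W$ into some $\subcrit_l$ using the Hartmann--Arguelles threshold graph $\thrha$, insert $\gamma_l$ copies of $V$, apply the cycle removal threshold, and then pass from congruence modulo~$\gamma_l$ to modulo~$\gamma$. Your multiplicity-difference bookkeeping with $d(e)$ makes the weight estimate $p(\tilde W_0)\ge p(W)$ fully explicit (the paper's ``each edge of~$W$ is removed, kept, or replaced'' is terser), and for the last step you build the mod-$\gamma$ representative by hand, whereas the paper invokes Theorem~\ref{t:representation} to identify $p(\walkslennode{i}{j}{t,\gamma_l}{\subcrit_l})$ with $p(\walkslennode{i}{j}{t,\gamma}{\crit_l(A)})$.
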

\begin{proof}
We assume without loss of generality that~$A$ is max-balanced.

Let~$W$ be a walk with maximal weight
in~$\walkslennode{i}{j}{t}{\digr}\setminus
\walkslennode{i}{j}{t}{\crit(A)}$ We show that there exists a walk
$\tilde{W}\in\walkslennode{i}{j}{t,\gamma}{\crit(A)}$
with~$p(\tilde{W})\ge p(W)$.

Denote the maximum weight of edges in~$W$
by $\mu(W)$. Define the graph
\begin{equation}
\label{tildedigr}
\tilde{\digr}:=
\begin{cases}
\hagr &\text{if $\mu(W)\le\mu^{ha}$}\enspace,\\
\thrha(\mu(W)) & \text{otherwise}\enspace.
\end{cases}
\end{equation}
By the definition of Hartmann-Arguelles threshold graphs,
$\crit(A)\subseteq \Tilde{\digr}\subseteq\hagr$.  In both cases
of~\eqref{tildedigr}, walk $W$ contains a node~$k$ of
digraph~$\tilde{\digr}$, which is completely reducible (due to the
max-balancing).

Let $W=W_1\cdot W_2$ with~$W_1$ ending at node~$k$. By definition
of~$\Tilde{\digr}$, there exists a critical node~$\ell$ in the same
s.c.c.~$H$ of~$\tilde{\digr}$ as~$k$.
Moreover, $H$ contains a whole s.c.c.\ $\crit_l(A)$ of~$\crit(A)$, and hence 
also a component $\subcrit_l$ of the representing subgraph $\subcrit$.
Hence we can choose $\ell$~in $\subcrit_l$. 

Let~$V_1$ be a walk in~$\tilde{\digr}$ from~$k$ to~$\ell$ and
$V_2$~be a walk in~$\tilde{\digr}$ from~$\ell$ to~$k$. Set
$V=V_1V_2$
and~$W_3 = W_1\cdot V^{\gamma_l}\cdot W_2$.
 By the definition of
the cycle replacement threshold, there exists a
walk~$\tilde{W}\in\walkslennode{i}{j}{t,\gamma_l}{\subcrit_l}$
obtained from~$W_3$ by removing cycles and possibly inserting cycles
in~$\subcrit_l$ such
that~$l(\tilde{W})\le T_{cr}^{\gamma_l}(\subcrit_l)\le t+\gamma_l
-1$. Since $l(W_3)\equiv t\pmod{\gamma_l}$, it
implies~$l(\tilde{W})\le t$.

Recall that since~$A$ is max-balanced and~$\lambda(A)=0$, all edges
have nonpositive weights, and the weight of each edge of~$\tilde{\digr}$ is not smaller than that
of any edge of~$W$.
Each edge of~$W$ is either removed, kept or replaced by an edge of~$\tilde{\digr}$ in~$\tilde{W}$, thus
we conclude that
$p(\tilde{W})\geq p(W)$. This shows
\begin{equation*}
p\left(
\walkslennode{i}{j}{t}{\digr}\setminus 
\walkslennode{i}{j}{t}{\crit(A)}\right)\le \max_l p\left(\walkslennode{i}{j}{t,\gamma_l}{\subcrit_l}\right).
\end{equation*}
However, Theorem~\ref{t:representation} implies that
\begin{equation*}
p\left(\walkslennode{i}{j}{t,\gamma_l}{\subcrit_l}\right)=p\left(\walkslennode{i}{j}{t,\gamma}{\crit_l(A)}\right)
\end{equation*}
for each $l$ and hence
\begin{equation*}
\max_l p\left(\walkslennode{i}{j}{t,\gamma_l}{\subcrit_l}\right)=\max_l 
p\left(\walkslennode{i}{j}{t,\gamma}{\crit_l(A)}\right)= p\left(\walkslennode{i}{j}{t,\gamma}{\crit(A)}\right),
\end{equation*}
and this concludes the proof.
\end{proof}

\if{

It remains to show that $p(\tilde{W})\geq p(W)$. For this assume
without loss of generality that $l(\Tilde{W})=l(W)$: if necessary, $\Tilde{W}$
can be
extended to the length $l(W)$ by adjoining some critical
edges\footnote{Then $\Tilde{W}$ is not a walk anymore, but this does
not matter for the count of weight.}. Comparing $\Tilde{W}$ with $W$
we see that some of the edges of $\Tilde{W}$ are the ``old'' edges
of $W$, and the rest of the edges of $W$ (with weight $\leq\mu(W)$)
have been replaced with the critical edges or the edges of
$\Tilde{\digr}$ (with weight $\geq\mu(W)$). Hence $p(\tilde{W})\geq
p(W)$ as claimed.
}\fi

\if{
\begin{figure}
\caption{Walk~$\tilde{W}$ in proof of Lemma~\ref{l:SdPHA}}
\label{fig:p:CSRsimple:thresh}
\end{figure}
}\fi

Proposition~\ref{p:TcrToT1} part (i) now follows from
Lemmas~\ref{l:SiP} and~\ref{l:SdPHA}.

\subsection{Cycle threshold scheme}
In this section, we perform step~\ref{SdP} of the strategy in the case $B=\bct$.
We prove the following lemma.
\begin{lemma}\label{l:SdPct}
Let $A$ be a square matrix with $\lambda(A)=0$.

For any $t\ge \max\left\{\tilde{T}^{l(\cycle)}_{cr}(\cycle)|\cycle\textnormal{
cycle of $\ctgr$}\right\}$ and any~$i,j$,
inequality~\eqref{e:SdP} holds with~$\gamma=\gamma(\crit(A))$
and~$\digr=\ctgr$ the graph defining~$\bct$ in
Section~\ref{s:csr}.
\end{lemma}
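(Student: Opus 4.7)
The plan is to follow the outline of Section~\ref{s:ProofS}(1)(c) and adapt the proof of Lemma~\ref{l:SdPHA}, replacing the static edge-threshold graph $\thrha(\mu)$ by the cycle-threshold graph $\thrct(\mu)$ climbed iteratively along a staircase of cycles. As before, I assume $\lambda(A)=0$, so that critical edges have nonpositive weight and critical closed walks have weight~$0$.

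Let $W\in\walkslennode{i}{j}{t}{\ctgr}\setminus\walkslennode{i}{j}{t}{\crit(A)}$ achieve the left-hand side of~\eqref{e:SdP}. Because $W$ stays in $\ctgr$ but avoids $\crit(A)$, each cycle in its decomposition as a path plus cycles lies in an s.c.c.\ of $\thrct(\mu^{ct})$ that contains no critical s.c.c., and hence has mean at least $\mu^{ct}$. Pick one such cycle $\cycle$ occurring in $W$. The first key tool is the staircase lemma (Lemma~\ref{l:staircase}), which will produce a finite chain $\cycle=\cycle_0,\cycle_1,\dots,\cycle_r\subseteq\crit(A)$ in $\ctgr$ with strictly increasing mean weights, each consecutive pair sharing a node.

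The core step is a single ``cycle swap''. By the definition of $\tilde T^{l(\cycle)}_{cr}(\cycle)$, the hypothesis $t\ge \tilde T^{l(\cycle)}_{cr}(\cycle)$ guarantees that at least one copy of $\cycle$ can be removed from $W$ while preserving the length modulo $l(\cycle)$. Into the freed length budget I insert iterations of $\cycle_1$ (together with a connecting path through the shared node supplied by the staircase), adjusting so that the total length is restored to exactly $t$. Since $\mu(\cycle_1)>\mu(\cycle)$ and both $\mu$-values are $\le 0$, the net effect on $p(W)$ is non-negative. Iterating along the staircase, after at most $r$ swaps the walk contains a cycle of $\crit(A)$ and therefore visits a critical node; a final padding by critical closed walks (weight $0$ and lengths divisible by $\gamma(\crit(A))$) places it in $\walkslennode{i}{j}{t,\gamma}{\crit(A)}$ with weight at least $p(W)$, proving~\eqref{e:SdP}.

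The main obstacle is the simultaneous bookkeeping of length constraints at every step of the climb: the total length must remain $t$; the residue modulo $l(\cycle_k)$ must be preserved so that the $k$-th application of the strict cycle removal threshold is legal; and the final residue modulo $\gamma=\gamma(\crit(A))$ must match $t$. Verifying that the \emph{strict} form of the cycle removal threshold (the ``$+1$'' distinguishing $\tilde T^\gamma_{cr}$ from $T^\gamma_{cr}$) is precisely what is needed to open room at each swap without violating the residue constraints is the crux of the argument, and it is also what justifies indexing the bound in Proposition~\ref{p:TcrToT1}(ii) by individual cycles of $\ctgr$ rather than by larger subgraphs.
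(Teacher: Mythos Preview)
Your outline has the right high-level shape (use a staircase from Lemma~\ref{l:staircase} and climb step by step), but two misreadings break the argument.

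First, ``$W\in\walkslennode{i}{j}{t}{\ctgr}$'' does \emph{not} mean that $W$ stays in $\ctgr$; it only means $W$ visits some node of $\ctgr$. Hence your claim that every cycle in the decomposition of $W$ has mean at least $\mu^{ct}$ is false in general: $W$ may wander through arbitrarily low-mean cycles elsewhere in $\digr(A)$. This matters because the hypothesis of Lemma~\ref{l:staircase} is not available for an arbitrary subcycle of $W$. The paper fixes this by working with $\nu(W)$, the \emph{largest} cycle mean occurring in $W$, and starting the staircase at a cycle $\cycle_1$ realizing $\nu(W)$; then $\nu(W)$ automatically dominates the mean of every cycle that will later be removed.

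Second, and more seriously, you have the roles in Definition~\ref{def:TcrEp} reversed. The strict cycle removal threshold $\tilde{T}^{l(\cycle)}_{cr}(\cycle)$ does not promise that a copy of $\cycle$ can be removed; it promises that \emph{arbitrary} cycles can be removed (at least one) and that copies of the designated cycle $\cycle$ may be \emph{inserted}, while the length drops to at most the threshold and the residue modulo $l(\cycle)$ is preserved. So at the $\ell$-th step you must invoke $\tilde{T}^{l(\cycle_\ell)}_{cr}(\cycle_\ell)$ (with the staircase cycle $\cycle_\ell$, not the original $\cycle$): it deletes unknown cycles from $W_{\ell-1}$ and inserts copies of $\cycle_\ell$, and you then pad with further copies of $\cycle_\ell$ back up to length $t$. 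Your swap ``remove $\cycle$, insert $\cycle_1$'' is not what the threshold delivers, and your weight inequality ``$\mu(\cycle_1)>\mu(\cycle)$ implies $p$ does not decrease'' compares the wrong quantities.

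Once the roles are corrected, the delicate point the paper handles and you do not is the invariant that justifies $p(W_\ell)\ge p(W_{\ell-1})$: one must show that the mean of $\cycle_\ell$ dominates the mean of \emph{every} cycle present in $W_{\ell-1}$, not merely the mean of $\cycle_{\ell-1}$. This needs an induction. New cycles created at step $\ell-1$ use edges of $\cycle_{\ell-1}$ and hence share a node with it; by the staircase definition (where $\cycle_\ell$ has the greatest mean among cycles meeting $\cycle_{\ell-1}$) their mean is at most that of $\cycle_\ell$. Old cycles are handled by the inductive hypothesis together with the monotonicity of means along the staircase. Finally, the resulting $W_m$ already has length exactly $t$ and visits a critical node, so no ``final padding by critical closed walks'' is needed; $\walkslennode{i}{j}{t}{\crit(A)}\subseteq\walkslennode{i}{j}{t,\gamma}{\crit(A)}$ does the job.
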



A finite sequence of cycles $\cycle_1,\ldots,\cycle_m$ in~$\subcrit$
is called a {\em staircase\/} in~$\subcrit$ if, for
all~$1\leq s\leq m-1$, $\cycle_s$ and~$\cycle_{s+1}$ share a
node, $p(\cycle_s) / l(\cycle_s) \leq p(\cycle_{s+1}) /
l(\cycle_{s+1})$ and, moreover, the cycle mean of
$\cycle_{s+1}$ is the greatest among all the cycles sharing a
node with $\cycle_{s}$.

\begin{lemma}
\label{l:staircase}
Let~$\mu > \mu^{ct}$ and $\cycle$ be a cycle in~$\thrct(\mu)$
or~$\mu=\mu^{ct}$ and $\cycle$ be a cycle in~$\ctgr(\mu)$ with
$p(\cycle) / l(\cycle) = \mu$.
Then there exists a staircase
$\cycle_1,\ldots,\cycle_m$ in $\thrct(\mu)$
such that $\cycle_1=\cycle$ and $\cycle_m$ is critical.
\end{lemma}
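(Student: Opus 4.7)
The plan is to build the staircase greedily: at each step, given $\cycle_s$ in the staircase, let $\mu_s = p(\cycle_s)/l(\cycle_s)$ and let $M_s$ denote the maximum cycle mean over all cycles of $\digr(A)$ sharing a node with $\cycle_s$; take $\cycle_{s+1}$ to be any cycle realizing $M_s$. The cycle means are then nondecreasing by construction, bounded above by $\lambda(A)$, and take values in the finite set of cycle means of $\digr(A)$. If we can show that after finitely many steps $\mu_s$ strictly increases whenever $\cycle_s$ is not critical, then $\mu_s$ must reach $\lambda(A)$, giving a critical $\cycle_m$.

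The case $M_s > \mu_s$ is immediate, so the main obstacle is the plateau case $M_s = \mu_s < \lambda(A)$. Here I introduce the set $\mathcal{T}$ of cycles of mean $\mu_s$ reachable from $\cycle_s$ by a sequence of mean-$\mu_s$ cycles in which consecutive cycles share a node, and I aim to show that some $\cycle' \in \mathcal{T}$ satisfies $M(\cycle') > \mu_s$. Once this is established, I can prolong the staircase inside $\mathcal{T}$ from $\cycle_s$ to $\cycle'$ (each intermediate step is a valid greedy choice because the local maximum of cycle means is exactly $\mu_s$ throughout $\mathcal{T}$), and then take one further greedy step from $\cycle'$ that strictly increases the mean.

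To prove the existence of such a $\cycle'$, I argue by contradiction: suppose $M(\cycle') = \mu_s$ for every $\cycle' \in \mathcal{T}$, so that every cycle of $\digr(A)$ sharing a node with any $\cycle' \in \mathcal{T}$ has mean at most $\mu_s$. Let $H$ be the strongly connected component of $\thrct(\mu_s)$ containing $\cycle_s$. Since $\mu_s \geq \mu$, the component $H$ contains a critical s.c.c., using the definition of $\mu^{ct}$ when $\mu > \mu^{ct}$, and the construction of $\ctgr$ (as a union of s.c.c.'s of $\thrct(\mu^{ct})$ intersecting $\crit(A)$) when $\mu = \mu^{ct}$. Hence there is a critical cycle $\cycle^* \subseteq H$. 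Fix a walk $v_0, v_1, \ldots, v_N$ inside $H$ with $v_0 \in \cycle_s$ and $v_N \in \cycle^*$, using strong connectivity of $H$.

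The key induction is then: $v_i \in V(\mathcal{T}) := \bigcup_{\cycle' \in \mathcal{T}} \cycle'$ for every $i$. Each edge $(v_{i-1}, v_i)$ lies in $\thrct(\mu_s)$, so it is on some cycle $D_i$ of $\digr(A)$ with $p(D_i)/l(D_i) \geq \mu_s$, and strong connectivity of $D_i$ forces $D_i \subseteq H$. If $v_{i-1} \in V(\mathcal{T})$, then $D_i$ shares $v_{i-1}$ with a cycle of $\mathcal{T}$, so by hypothesis $p(D_i)/l(D_i) \leq \mu_s$, forcing equality; the closure of $\mathcal{T}$ under mean-$\mu_s$ cycle adjacency then gives $D_i \in \mathcal{T}$, hence $v_i \in V(\mathcal{T})$. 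Applied at $i = N$, this yields $\cycle^* \cap V(\mathcal{T}) \neq \emptyset$, and the hypothesis on $\mathcal{T}$ forces $p(\cycle^*)/l(\cycle^*) \leq \mu_s < \lambda(A)$, contradicting the criticality of $\cycle^*$. The hardest part of the proof is precisely this closure property of $\mathcal{T}$ combined with the propagation along the walk in $H$, which is what traps the walk inside the ``low-mean'' zone and contradicts the existence of the critical cycle guaranteed by $\mu \geq \mu^{ct}$.
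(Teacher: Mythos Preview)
Your argument is correct and follows essentially the same route as the paper: both use that, for $\mu_s\geq\mu^{ct}$, the s.c.c.\ of $\thrct(\mu_s)$ containing the current cycle must contain a critical cycle (by the definition of $\mu^{ct}$, resp.\ of $\ctgr$ in the boundary case), and then exploit connectivity inside that threshold graph to push the staircase toward higher means. Your $\mathcal{T}$-closure and walk-propagation argument in fact makes explicit the step the paper compresses into ``then we can build a staircase with a greater cycle mean of the final cycle''; one small point to tighten is that when you finally prolong the staircase through $\mathcal{T}$ you should pick $\cycle'$ at minimal $\mathcal{T}$-distance from $\cycle_s$, so that all intermediate cycles genuinely have local maximum mean equal to $\mu_s$ and hence each transition is a valid greedy step.
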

\begin{proof}
Suppose by contradiction that no such staircase exists.
Let~$\cycle_1,\ldots,\cycle_m$ be a staircase in~$\thrct(\mu)$ such that
 $\cycle_1=\cycle$
and $p(\cycle_m)/l(\cycle_m)$ is
maximal.

Denote $\mu' =  p(\cycle_m)/l(\cycle_m)$, so $\mu'<\lambda(A)$.
If the s.c.c.\ of $\thrct(\mu')$, in which
$\cycle_m$ lies, contains a cycle of mean weight strictly greater
than $\mu'$, then we can build a staircase with a greater cycle mean
of the final cycle, a contradiction. So that component of
$\thrct(\mu')$ does not contain a cycle of mean weight strictly
greater than~$\mu'$, which is a contradiction to the definition
of~$\mu^{ct}$ and the fact that $\mu'\geq\mu^{ct}$. Thus we must
have $\mu'=\lambda(A)$.
\end{proof}

\bigskip

\begin{proof}[Proof of Lemma~\ref{l:SdPct} and
Proposition~\ref{p:TcrToT1} part (ii)]

Let $t\geq \max_\cycle \tilde{T}_{cr}(\cycle)$
and let~$W\in\walkslen{i}{j}{t}$ visiting a node of~$\ctgr$ but no critical node.

Denote by~$\nu(W)$ the largest cycle mean of subcycles of~$W$. We
assume in the following that~$\nu(W)$ is maximal among
all~$W\in\walkslen{i}{j}{t}$ with~$p(W) = a^{(t)}_{ij}$. We prove
Lemma~\ref{l:SdPct} by showing $\nu(W)=\lambda(A)$. Assume
that~$\nu(W)<\lambda(A)$, and define
\begin{equation}
\label{tildedigr-ct}
\Tilde{\digr}:=
\begin{cases}
\ctgr & \text{if $\nu(W)\leq\mu^{ct}$}\enspace,\\
\thrct(\nu(W)) & \text{otherwise\enspace.}
\end{cases}
\end{equation}
By the definition of cycle threshold graphs,
$\crit(A)\subseteq\Tilde{\digr}\subseteq\ctgr$.


By Lemma~\ref{l:staircase}, there exists a staircase
$\cycle_1,\dots,\cycle_m$ in~$\tilde{\digr}$ such that~$\cycle_1$
has $p(\cycle_1)=\nu(W)$ and shares a node with~$W$, and~$\cycle_m$
is critical. We inductively define walks~$W_0,\dots,W_m$ as follows:
Set $W_0=W$. For~$1\leq\ell\leq m$, let~$\subcrit$ be the subgraph
of~$\digr(A)$ induced by~$\cycle_\ell$. By definition
of~$\tilde{T}_{cr}$, there is a
walk~$V\in\walkslennode{i}{j}{t,l(\cycle_{\ell})}{\cycle_{\ell}}$ obtained
from~$W_{\ell-1}$ by removing at least one cycle and inserting at
least one cycle in~$\subcrit$ (i.e., one copy of~$\cycle_\ell$) such
that $l(V)\le\tilde{T}^{l(\cycle_\ell)}_{cr}(\cycle_\ell)\le t$. Now
define~$W_\ell$ as walk~$V$ after inserting enough copies
of~$\cycle_\ell$, to have $l(W_\ell)=t$.  Thus $\cycle_\ell$ is a subwalk
of~$W_\ell$ for all $\ell$, and walk~$W_m$ contains a critical node.

We now show that $p(W_\ell)\geq p(W_{\ell-1})$ on each step. For
this we will prove by induction that, on each step, the mean weight
of $\cycle_{\ell+1}$ is not less than that of any cycle (and hence
closed walk) in $W_{\ell}$. The base of induction ($\ell=0$) is due
to the definition of $\tilde{\digr}$. In general, observe that the
cycles in $W_{\ell}$ are 1)$\cycle_{\ell}$ and cycles using the
edges of $\cycle_{\ell}$, 2) cycles that were already in
$W_{\ell-1}$. For the latter cycles we use the inductive assumption,
while the cycles using edges of $\cycle_{\ell}$ share a common node
with it and hence their mean weight does not exceed that of
$\cycle_{\ell+1}$ by the definition of staircase.

Setting $\tilde{W}=W_m$ we obtain $\tilde{W}\in
\walkslennode{i}{j}{t}{\crit(A)}$ and $p(\tilde{W})\geq p(W)$, thus
Lemma~\ref{l:SdPct} and Proposition~\ref{p:TcrToT1} part~(ii) are
proved.
\end{proof}

\section{Cycle Removal}\label{s:Tcr}


\subsection{Cycle removal threshold}
In this section, we state some bounds on $T_{cr}^\gamma(\subcrit)$
for some subgraphs~$\subcrit$ of~$\digr(A)$. Those bounds are
achieved by three different methods, one of them is new. Recall
$\circumf(\digr(A)$, $\cabdrive(\digr(A))$ and other parameters
(Section~\ref{s:statements}).

Let us first recall an elementary application of the pigeonhole
principle. The origins of this lemma were briefly discussed by
Aigner and Ziegler~\cite{AZ:01}, p.~133. In the context of
max-algebraic matrix powers, it was considered for the first time by
Hartmann and Arguelles~\cite{HA-99}. It is in the heart of almost
all of our cycle reductions.

\begin{lemma}
\label{l:NT} Let $a_1,\ldots, a_m$ be integers. Then there exists a
nonempty subset $I\subseteq\{1,\dots,m\}$ of indices such that the
sum $\sum_{i\in I} a_i$ is a multiple of $m$.
\end{lemma}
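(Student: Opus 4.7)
The plan is to apply a classical pigeonhole argument on prefix sums. Specifically, I would define the partial sums $s_0 = 0$ and $s_k = a_1 + a_2 + \cdots + a_k$ for $k = 1, \ldots, m$, yielding $m+1$ integers in total. Then I would consider their residues modulo $m$. Since there are only $m$ possible residue classes but $m+1$ prefix sums, the pigeonhole principle forces two distinct indices $0 \le j < k \le m$ with $s_j \equiv s_k \pmod{m}$.

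From this collision I would extract the desired subset. Setting $I = \{j+1, j+2, \ldots, k\}$, which is nonempty because $j < k$ and is a subset of $\{1,\ldots,m\}$ because $0 \le j$ and $k \le m$, the sum telescopes to
\[
\sum_{i \in I} a_i = s_k - s_j,
\]
which is divisible by $m$ by choice of $j, k$. This gives the desired subset.

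The argument is essentially one paragraph and there is no real obstacle; the only point worth remarking is that one must include the empty prefix $s_0 = 0$ in the pigeonhole count (otherwise $m$ prefix sums in $m$ residue classes need not collide, and in fact a single $s_k \equiv 0 \pmod m$ must then be treated as its own case corresponding to $I = \{1, \ldots, k\}$, which is covered by the $j=0$ instance of the argument above). So including $s_0$ unifies both situations in a single clean application of pigeonhole.
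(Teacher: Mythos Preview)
Your proof is correct and is precisely the classical pigeonhole argument on prefix sums that the paper has in mind; the paper itself does not spell out a proof but simply calls the lemma ``an elementary application of the pigeonhole principle'' and refers to Aigner and Ziegler. There is nothing to add.
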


One of the bounds that we use is in fact proved in~\cite{CBFN-11} (see also \cite{CBFN-12}, Theorem 2).
The proof is recalled for the reader's convenience.
\begin{proposition}{(Lemma~20 of~\cite{CBFN-11})}
\label{p:cbfn}
For any~$A\in\Rpnn$, any node~$i$ and any integer~$\gamma$, we have:
\begin{equation}
\label{e:cbfn}
T_{cr}^\gamma(\{i\})\le (\gamma-1)\circumf+(\gamma+1)\cabdrive,
\end{equation}
where $\cabdrive=\cabdrive(\digr(A))$ and
$\circumf=\circumf(\digr(A))$.
\end{proposition}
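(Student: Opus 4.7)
The plan is to reduce the walk $W$ to a shorter one in the same congruence class modulo $\gamma$ by iteratively removing batches of cycles whose combined length is a multiple of~$\gamma$, combining a walk decomposition into a path plus cycles with the pigeonhole principle of Lemma~\ref{l:NT}.

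\medskip

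\textbf{Step 1 (Decomposition).} Suppose $W\in\walksnode{i'}{j}{\{i\}}$ has length at least $T=(\gamma-1)\circumf+(\gamma+1)\cabdrive+1$. Fix an occurrence of the node $i$ in $W$ and split $W=W_1W_2$ so that $W_1$ ends at $i$ and $W_2$ starts at $i$. Repeatedly extract a cycle from $W_1$ whenever it contains a repeated node (and similarly for $W_2$) until each of $W_1,W_2$ becomes a simple path. This yields a decomposition of the edge multiset of $W$ as the union of a base walk $P=P_1P_2$ going $i'\to i\to j$, with $l(P)\le 2\cabdrive$, together with cycles $\cycle_1,\dots,\cycle_m$ each of length at most $\circumf$, each attached (directly or through a chain of intermediate cycles) to a node of~$P$.

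\medskip

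\textbf{Step 2 (Iterative cycle removal via pigeonhole).} As long as at least $\gamma$ cycles remain among $\cycle_1,\dots,\cycle_m$, select any $\gamma$ of them and apply Lemma~\ref{l:NT} to their lengths: there is a nonempty subset $I$ with $\sum_{k\in I}l(\cycle_k)\equiv 0\pmod{\gamma}$. Remove the corresponding cycles from the walk (this is possible because each cycle sits as a detour that can be bypassed at its attachment node, and bypassing a cycle cannot disconnect any other cycle from the remaining walk provided we process the ones nearest the leaves of the attachment tree first). After each round at least one cycle is eliminated, so the process terminates with strictly fewer than $\gamma$ cycles, i.e.\ at most $\gamma-1$, remaining. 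Let $V$ be the resulting walk; by construction $V\in\walksnode{i'}{j}{\{i\}}$ and $l(W)-l(V)\equiv 0\pmod{\gamma}$.

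\medskip

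\textbf{Step 3 (Length bound).} At the end, the remaining cycles contribute at most $(\gamma-1)\circumf$ to the length of~$V$. Adding the contribution of the path part bounds $l(V)$. The delicate point is the estimate of the path part: the naive bound $l(P_1)+l(P_2)\le 2\cabdrive$ would give a better constant, but one needs slack to account for the fact that when a batch of cycles is removed from $W$, the removal may break the walk into several path segments that must be reconnected by short paths, each bounded by $\cabdrive$, for $\gamma$-many batches over the course of the reduction; carefully summing these contributions gives $l(V)\le(\gamma-1)\circumf+(\gamma+1)\cabdrive=T-1$. Hence $T_{cr}^{\gamma}(\{i\})\le T$, which is the claimed bound.

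\medskip

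\textbf{Main obstacle.} The combinatorial step of proving that the iterative deletion of cycle-bundles really can be carried out inside an actual walk is the delicate part. Cycles extracted abstractly from the edge multiset need to be attached to the base path through a valid sequence of detours, and after several removals one must ensure the remainder still forms a walk from $i'$ to $j$ visiting $i$. Organising the attachments into a tree rooted at the base path $P$ and removing cycles in a leaves-first order resolves this, but the bookkeeping is what forces the looser $(\gamma+1)\cabdrive$ term rather than a naive $2\cabdrive$.
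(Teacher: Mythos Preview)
Your Step~2 has a genuine gap. With the decomposition you use---a base path $P$ together with cycles organised as a tree of detours---the subset $I$ returned by Lemma~\ref{l:NT} need not be removable. If $\cycle_2$ is attached at a node of~$\cycle_1$ that does not lie on~$P$, and the pigeonhole lemma hands you $I=\{\cycle_1\}$, then deleting $\cycle_1$ leaves $\cycle_2$ disconnected from the remaining walk; ``leaves first'' does not help because $\cycle_1$ is the only cycle in~$I$. Your Step~3 then tries to repair this by ``reconnecting with short paths'', but the definition of $T_{cr}^\gamma(\{i\})$ allows only removing cycles (and inserting cycles of~$\{i\}$, of which there are none); you may not add paths. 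So the argument, as written, does not produce a walk~$V$ obtained from~$W$ by cycle removal.

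The paper avoids the whole difficulty by choosing a different decomposition. Instead of stripping $W$ down to one path plus a tree of cycles, it writes $W=W_0\cdot\cycle_1\cdot W_1\cdots\cycle_m\cdot W_m$ as an \emph{inline} alternation with $m$ maximal, so each $\cycle_s$ sits between two consecutive segments and every $W_k$ is a path (the one containing~$i$ splits further into two paths at~$i$). In this linear layout any subset of cycles can be deleted and the remaining pieces concatenate automatically to a walk from the same start to the same end through~$i$. The price is that one has $m+2$ path segments rather than two; since no removable subset exists only when $m\le\gamma-1$, this gives at most $\gamma+1$ paths and hence the term $(\gamma+1)\cabdrive$. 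That term is therefore not bookkeeping slack from reconnections but the exact cost of using the inline decomposition.
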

\begin{proof}
Let $W$ be a walk going through $i$.  Write this walk as $W=W_0\cdot
\cycle_1\cdot\cdots\cdot\cycle_m\cdot W_m$ where (i) all $\cycle_s$
are nonempty cycles, (ii) node $i$ is a node of the walk $W_r$, and
(iii) $m$ is maximal. Write also $W_r=V_0V_1$ so that $i$ is the end
of $V_0$ and the start of $V_1$. The whole configuration is shown on
Figure~\ref{fig:lem:cbfn}.

If a subset $S\subseteq\{1,\ldots,m\}$ of indices such that $\gamma$
divides $\sum_{s\in S} l(\cycle_s)$ cannot be chosen, then by
Lemma~\ref{l:NT} $m<\gamma-1$, and the walks
$W_1,\ldots,W_{r-1},V_0,V_1,W_{r+1},\ldots,W_m$ are paths (otherwise
$m$ is not maximal), which implies that $l(W)\le
(\gamma-1)\circumf+(\gamma+1)\cabdrive$.

If $l(W)> (\gamma-1)\circumf+(\gamma+1)\cabdrive$, then such a
subset of cycles can be chosen, and a strictly shorter subwalk of
the same length modulo $\gamma$ is obtained by cycle deletion, hence
the claim.
\end{proof}

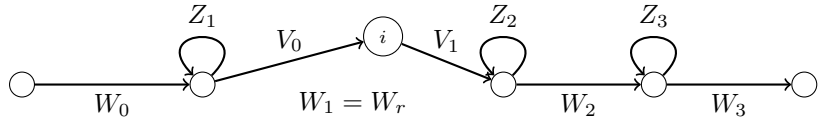
\begin{figure}[ht]
\centering
\begin{tikzpicture}[scale=0.8]
    \node[shape=circle,draw] (i) at (-6,0) {};
    \node[shape=circle,draw] (k) at (0,.8) {$\scriptstyle i$};
    \node[shape=circle,draw] (j) at (7,0) {};
    \node[shape=circle,draw] (n1) at (4.5,0) {};
    \node[shape=circle,draw] (n2) at (2,0) {};
    \node[shape=circle,draw] (n3) at (-3,0) {};
        \node  at (-0.5,-0.3) {$W_1=W_r$};
        \draw[thick,->] (i) -- node[below] {$W_0$} (n3);
        \draw[thick,->] (n3) -- node[above] {$V_0$} (k);
        \draw[thick,->] (k) -- node[above] {$V_1$} (n2);
        \draw[thick,->] (n2) -- node[below] {$W_2$} (n1);
        \draw[thick,->] (n1) -- node[below] {$W_3$} (j);
        \draw[thick,->] (n3) .. controls +(1,1) and +(-1,1) .. node[above] {$\cycle_1$} (n3);
        \draw[thick,->] (n2) .. controls +(1,1) and +(-1,1) .. node[above] {$\cycle_2$} (n2);
        \draw[thick,->] (n1) .. controls +(1,1) and +(-1,1) .. node[above] {$\cycle_3$} (n1);
\end{tikzpicture}
\caption{Walk $W$ in the proof of Proposition~\ref{p:cbfn} ($m=3$ and $r=1$).}
\label{fig:lem:cbfn}
\end{figure}

Proposition~\ref{p:cbfn} implies that $T_{cr}^{l(\cycle)}(\cycle)\le
(l(\cycle)-1)\circumf+(l(\cycle)+1)\cabdrive$ and
$T_{cr}^{\gamma(\crit)}(\crit)\le 2 \gamma(\crit)(n-1)
+\gamma(\crit)-1$ for any s.c.c.\ $\crit\subset\crit(A)$ but both
bounds can be improved using various methods.

The first bound is improved in Section~\ref{s:TcrHA}, following a method used in~\cite{HA-99}, which leads to:
\begin{proposition}\label{p:TcrHA}
For $A\in\Rp^{n\times n}$, $\cycle$ a cycle of~$\digr(A)$ and~$\gamma$ a divisor of~$l(\cycle)$, we have:
\begin{equation}\label{e:TcrHA}
T_{cr}^{\gamma}(\cycle)\le (n-1-l(\cycle)+\gamma)\circumf+\cabdrive+ l(\cycle),
\end{equation}
where $\cabdrive=\cabdrive(\digr(A))$ and $\circumf=\circumf(\digr(A))$.
\end{proposition}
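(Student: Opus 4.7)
The plan is to adapt the cycle-peeling strategy of Proposition~\ref{p:cbfn}, but to exploit two special properties of $\subcrit=\cycle$: that $\cycle$ is a single simple cycle of length $l(\cycle)$, and that $\gamma\mid l(\cycle)$, so one copy of $\cycle$ may be inserted at any $\cycle$-node on the walk without altering its length modulo~$\gamma$.

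First I will take a walk $W\in\walksnode{i}{j}{\cycle}$ of length strictly greater than the claimed bound and decompose it as $W=W_0\cycle_1W_1\cdots\cycle_mW_m$, where the $\cycle_s$ are nonempty simple cycles (each of length at most $\circumf$) and $m$ is maximal. By maximality, each walk segment $W_s$ is itself a path in $\digr(A)$, hence has length at most $\cabdrive$. Moreover, since $W$ visits $\cycle$, I may arrange---by rerouting one of the $\cycle_s$ through a $\cycle$-node if necessary---that some segment $W_r$ contains a node $k\in\cycle$, and split $W_r=V_0V_1$ at~$k$.

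Next I would apply Lemma~\ref{l:NT} to the cycle lengths $l(\cycle_1),\ldots,l(\cycle_m)$: whenever $m\geq\gamma$, a nonempty subset has total length divisible by~$\gamma$, and removing these cycles yields a strictly shorter walk in the same congruence class modulo~$\gamma$. This reduced walk still passes through $k\in\cycle$ because the segments---including the piece $V_0V_1$ containing~$k$---are untouched; thus the freedom to insert, when needed, an extra copy of $\cycle$ at~$k$ (of length~$l(\cycle)\equiv 0\pmod\gamma$) safeguards the $\cycle$-visit without disturbing the congruence. Iterating, I may assume $m$ is small.

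The central new ingredient---and the main obstacle---is to bound the total length of the path segments by $\cabdrive+l(\cycle)$ rather than by the $(\gamma+1)\cabdrive$ coming from Proposition~\ref{p:cbfn}. Exploiting the maximality of $m$ once more (any two path segments sharing an interior node would yield a further peelable cycle), the path interiors are pairwise disjoint; so their union visits at most $n$ distinct nodes of $\digr(A)$. Among these, at most $l(\cycle)$ lie on~$\cycle$, and the portion of the backbone that traverses $\cycle$ accounts for at most $l(\cycle)$ edges, while the remainder can be realized as a single path of length at most $\cabdrive$ in the subgraph obtained by contracting $\cycle$ to one node. Combining this path contribution of $\cabdrive+l(\cycle)$ with the cycle contribution of at most $(n-1-l(\cycle)+\gamma)\circumf$---absorbing the remaining kept cycles into the slack left by nodes outside~$\cycle$---yields the stated bound. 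The hardest step will be making this counting precise, reconciling the node-counting argument (which limits how many cycles can be attached outside~$\cycle$) with the requirement that the reduced walk both satisfy the congruence modulo~$\gamma$ and still pass through a node of~$\cycle$.
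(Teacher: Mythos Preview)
Your approach has a genuine gap at the ``central new ingredient.'' In the sequential decomposition $W=W_0\cycle_1W_1\cdots\cycle_mW_m$ with $m$ maximal, reducing to $m\le\gamma-1$ via Lemma~\ref{l:NT} still leaves you with up to $\gamma+1$ path segments (counting the split $W_r=V_0V_1$), and your claim that their combined length is at most $\cabdrive+l(\cycle)$ is not justified. The assertion that distinct path segments have disjoint interiors because otherwise ``a further peelable cycle'' would exist is not correct for this sequential decomposition: if $W_0$ and $W_2$ share an interior node, the resulting closed subwalk straddles $\cycle_1$, $W_1$, and $\cycle_2$, and extracting it does not yield a decomposition of the same shape with larger~$m$. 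Even granting disjointness of interiors, you would obtain only a bound of order $n$ on the total path length, not $\cabdrive+l(\cycle)$; your contraction argument (``the remainder can be realized as a single path'') is asserted, not proved, and there is no reason the $\gamma+1$ segments should concatenate into one path after contracting~$\cycle$.

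The paper's route is structurally different. It decomposes $W$ into a \emph{single} path $P$ and a multiset of cycles $\{\cycle_\alpha\}_{\alpha\in S}$ (a multigraph decomposition, not a sequential one). It then keeps two small subsets of cycles: a set $R_1$ of size at most $n-l(\cycle)$ chosen so that $P\cup\cycle\cup\bigcup_{\alpha\in R_1}\cycle_\alpha$ is connected and spans all nodes of $W$, and a set $R_2\subseteq S\setminus R_1$ of size at most $\gamma-1$ obtained from Lemma~\ref{l:NT} to preserve the congruence modulo~$\gamma$. The walk $V$ is reassembled from $P$, the cycles in $R_1\cup R_2$, and possibly one copy of~$\cycle$ for connectivity. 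This gives directly $l(V)\le\cabdrive+(n-l(\cycle)+\gamma-1)\circumf$ in the connected case and a similar bound otherwise, yielding~\eqref{e:TcrHA}. The key idea you are missing is the separation of retained cycles into ``connectivity'' cycles (counted by $n-l(\cycle)$) and ``congruence'' cycles (counted by $\gamma-1$), which is what produces the term $(n-1-l(\cycle)+\gamma)\circumf$; your approach tries to account for everything through the congruence reduction alone and then recover the $n-l(\cycle)$ savings on the path side, which does not go through.
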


This method also leads to:
\begin{proposition}\label{p:TcrHAWielandt}
For $A\in\Rp^{n\times n}$ and~$\cycle$ a cycle with length~$n$ of~$\digr(A)$, we have $\tilde{T}_{cr}^{n}(\cycle)\le n^2-n+1$.
\end{proposition}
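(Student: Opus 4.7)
The plan is to prove the bound by iterated cycle removal, exploiting the Hamiltonian character of $\cycle$. Since $l(\cycle)=n$ equals the number of nodes of $\digr(A)$, $\cycle$ is a Hamiltonian cycle: every node of $\digr(A)$ lies on $\cycle$, so any walk automatically visits $\cycle$ (making the condition $V\in\walksnode{i}{j}{\cycle}$ in the definition of $\tilde{T}_{cr}^n(\cycle)$ vacuous), and the circumference satisfies $\circumf(\digr(A))\le n$.

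Set $T=n^2-n+1$. The core reduction claim I will establish is the following: whenever a walk $W'$ from $i$ to $j$ has $l(W')\ge T$, there is a nonempty subset of the cycles in some decomposition of $W'$ whose total length is a positive multiple of $n$, and removing those cycles produces a strictly shorter walk from $i$ to $j$ with the same length modulo $n$. Granted the claim, the proof finishes by iterating: starting from $W$ and reducing as long as the current length remains $\ge T$, the process terminates in finitely many steps at a walk $V$ with $l(V)<T$ and $l(V)\equiv l(W)\pmod n$. Since $l(W)\ge T$ forces at least one reduction step, the strict condition is satisfied, and no insertion of a copy of $\cycle$ is required.

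To establish the reduction claim, decompose $W'=P+C_1+\cdots+C_m$ where $P$ is a simple path from $i$ to $j$ (so $l(P)\le n-1$) and $C_1,\ldots,C_m$ are cycles (each of length $\le\circumf\le n$). If $m\ge n$, Lemma~\ref{l:NT} applied to $l(C_1),\ldots,l(C_n)$ yields a nonempty $I\subseteq\{1,\ldots,n\}$ with $\sum_{i\in I}l(C_i)\equiv 0\pmod n$, and we remove the cycles indexed by $I$. If $m\le n-1$, the total cycle length satisfies $\sum_i l(C_i)=l(W')-l(P)\ge T-(n-1)=n^2-2n+2$; but if every $l(C_i)$ were at most $n-1$ we would have $\sum_i l(C_i)\le m(n-1)\le(n-1)^2=n^2-2n+1$, a contradiction. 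Hence some $l(C_j)=n$, and removing that single cycle suffices since $n\equiv 0\pmod n$.

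The main obstacle is the case $m\le n-1$: this is exactly where the Hamiltonian bound $\circumf\le n$ must combine with the length lower bound on $W'$ to force a cycle of length precisely $n$ in the decomposition. The remaining ingredients are routine---the path-plus-cycles decomposition exists by the standard fact recalled in the preliminaries, the representation $W'=W_0 C_1 W_1\cdots C_m W_m$ used in the proof of Proposition~\ref{p:cbfn} makes the removal of any subset of cycles a well-defined operation producing a valid walk with the same endpoints, and termination of the iteration is immediate since each reduction step strictly decreases the (non-negative, integer) length by a positive multiple of $n$.
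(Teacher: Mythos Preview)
Your argument has a genuine gap: you conflate two different decompositions of a walk. The decomposition $W'=P+C_1+\cdots+C_m$ with $P$ a \emph{simple} path (giving $l(P)\le n-1$) is the multigraph decomposition of the preliminaries; in that decomposition one cannot in general delete an arbitrary subset of the $C_i$ and obtain a walk. Concretely, with $n=3$ and $W'=(1,2,3,2,3,1,2)$, the stack-based decomposition gives $P=(1,2)$, $C_1=(2,3,2)$, $C_2=(1,2,3,1)$; removing only the Hamiltonian cycle $C_2$ leaves the edges of $C_1$ disconnected from $P$, so no walk from $1$ to $2$ results. The representation $W'=W_0C_1W_1\cdots C_mW_m$ from Proposition~\ref{p:cbfn} does allow deletion of any subset of cycles, but there the $W_i$ are only \emph{individually} paths and $W_0\cdots W_m$ need not be simple, so the bound $l(P)\le n-1$ is unavailable. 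Both of your cases ($m\ge n$ via Lemma~\ref{l:NT}, and $m\le n-1$ via a length-$n$ cycle) rely on removing a chosen subset of cycles from the first decomposition, and therefore both are incomplete.

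The paper handles this by keeping the simple-path decomposition but \emph{reassembling} a walk from $P$ together with a retained subset $R$ of cycles (and, when necessary, a copy of the Hamiltonian cycle $\cycle$), using connectivity of the induced subgraph rather than literal deletion from $W$. The key extra observation you are missing is that, after the recursive removal of subsets summing to multiples of $n$, every cycle remaining in $R$ has length at most $n-1$ (a length-$n$ cycle is itself a multiple of $n$ and would have been removed), so $\circumf_W\le n-1$; combined with $|R|\le n-1$ and $l(P)\le n-1$ this yields $l(V)\le n^2-n$, hence $T_{cr}^n(\cycle)\le n^2-n$ and $\tilde T_{cr}^n(\cycle)\le n^2-n+1$.
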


The second bound, is improved in Section~\ref{s:TcrLin} thanks to a new method, which leads to:
\begin{proposition}\label{p:TcRLin}
For $A\in\Rpnn$ and $\subcrit$ a subgraph of~$\digr(A)$ with~$n_1$ nodes, we have:
$$\forall \gamma\in\Nat, T_{cr}^\gamma(\subcrit)\le \gamma n +n-n_1-1.$$
\end{proposition}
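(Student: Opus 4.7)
The plan is to combine the cycle-extraction decomposition used in the proof of Proposition~\ref{p:cbfn} with Lemma~\ref{l:NT}, exploiting a sharper consequence of the maximality of~$m$. Given a walk $W\in\walksnode{i}{j}{\subcrit}$ of length $L\ge\gamma n+n-n_1$, I would write $W=W_0\cdot\cycle_1\cdot W_1\cdots\cycle_m\cdot W_m$ with all $\cycle_s$ nonempty cycles and $m$ maximal. Maximality implies not only that each $W_s$ is a simple path, but, crucially, that the concatenated skeleton $W'=W_0W_1\cdots W_m$ is itself a simple walk in $\digr(A)$: any repeated node in $W'$ would yield a closed subwalk of $W$ from which a further cycle could be extracted, contradicting the choice of~$m$. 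Consequently $l(W')\le n-1$ in general, and $l(W')\le n-n_1-1$ if $W'$ does not visit $\subcrit$.

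The argument then splits according to whether the skeleton $W'$ visits $\subcrit$. In the first case, combining $l(W')\le n-1$ with $L=l(W')+\sum_{s=1}^m l(\cycle_s)\le (n-1)+mn$ yields $mn\ge\gamma n-n_1+1>(\gamma-1)n$ and hence $m\ge\gamma$. Applying Lemma~\ref{l:NT} to the lengths of any $\gamma$ among the $\cycle_s$ produces a nonempty index set $I$ with $\sum_{s\in I}l(\cycle_s)\equiv 0\pmod\gamma$; removing the cycles indexed by $I$ yields a shorter walk which still contains the skeleton~$W'$ and hence still visits~$\subcrit$. In the second case, $W'$ avoids~$\subcrit$, so $l(W')\le n-n_1-1$ and $L\le(n-n_1-1)+mn$ forces $mn\ge\gamma n+1$ and thus $m\ge\gamma+1$. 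Since $W$ itself visits~$\subcrit$, at least one cycle~$\cycle_{r'}$ must do so; I would then apply Lemma~\ref{l:NT} to $\gamma$ cycles chosen from the $m-1\ge\gamma$ cycles other than~$\cycle_{r'}$, and remove the resulting subset while keeping~$\cycle_{r'}$ in place, preserving the visit to~$\subcrit$.

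In both cases $W$ is replaced by a walk of $\walksnode{i}{j}{\subcrit}$ that is strictly shorter---since cycle lengths are positive, the subset produced by Lemma~\ref{l:NT} has total length a positive multiple of~$\gamma$---and of the same length modulo~$\gamma$. Iterating this reduction terminates once the length has dropped to at most $\gamma n+n-n_1-1$, giving the claim. The main delicate point I anticipate is the simplicity of the skeleton~$W'$ under maximality of~$m$; once this is in place, together with the routine fact that deleting a cycle $\cycle_s$ from the decomposition produces a genuine walk by concatenating $W_{s-1}$ and $W_s$ at their common endpoint, the counting is direct.
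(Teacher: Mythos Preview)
Your proof has a genuine gap: the claim that the skeleton $W'=W_0W_1\cdots W_m$ is a simple path under maximality of~$m$ is false. Maximality of~$m$ in the decomposition $W=W_0\cdot\cycle_1\cdot W_1\cdots\cycle_m\cdot W_m$ only guarantees that each individual $W_s$ is a path (this is what the proof of Proposition~\ref{p:cbfn} uses), not that their concatenation is. Consider the walk $W=(1,2,3,2,1,4)$. The only cycle that appears as a contiguous subwalk of~$W$ is $(2,3,2)$, so $m_{\max}=1$ with $W_0=(1,2)$, $\cycle_1=(2,3,2)$, $W_1=(2,1,4)$, and skeleton $W'=(1,2,1,4)$, in which node~$1$ repeats. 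The closed subwalk of~$W$ between the two occurrences of~$1$ does decompose into two cycles, $(2,3,2)$ and $(1,2,1)$, but $(1,2,1)$ is \emph{not} contiguous in~$W$, so it cannot serve as a second $\cycle_s$ in the linear decomposition. Your argument that a repeated node in~$W'$ allows one to increase~$m$ breaks down precisely here.

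Without simplicity of the skeleton you only have $l(W')\le (m+1)\cabdrive$, which after applying Lemma~\ref{l:NT} gives at best a bound of order $\gamma(\circumf+\cabdrive)$ rather than $\gamma n+n-n_1-1$. The paper's proof avoids this obstacle by a different method: it splits $W$ at the first node of~$\subcrit$, shuffles closed subwalks between the two pieces so that nodes shared between them appear at most once in the first piece, and then applies Lemma~\ref{LemModg} to each piece separately. That lemma bounds the number of occurrences of \emph{each node} by~$\gamma$ (via a direct pigeonhole on residues modulo~$\gamma$), which is what yields the linear-in-$n$ count you were aiming for.
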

\old{
\begin{corollary}\label{c:PumpingLemGene}
For $A\in\Rp^{n\times n}$ and any cycle~$\cycle$ of~$\digr(A)$, we have
$$T_{cr}^{l(\cycle)}\le l(\cycle)(n-2)+n+l(\cycle)-1$$
\end{corollary}
}


Tables~\ref{t:Tcr1} and~\ref{t:Tcr2} show the bounds obtained for a critical
cycle~$\cycle$ or a s.c.c $\crit$ of~$\crit(A)$. Here the first
column contains proposition number, $\gamma=\gamma(\crit)$, and
other parameters refer to~$\digr(A)$. Note that $l(\cycle)=n$ in the
case of Proposition~\ref{p:TcrHAWielandt}.


\begin{table}[h]
\centering
\begin{tabular}{|c|c|c|}
\hline
Prop.&$T^{l(\cycle)}_{cr}(\cycle)-l(\cycle)+1$ & $\tilde{T}^{l(\cycle)}_{cr}(\cycle)$
\\\hline
\ref{p:cbfn} &
$(l(\cycle)-1)(\circumf-1)+(l(\cycle)+1)\cabdrive$&
$(l(\cycle)-1)\circumf+(l(\cycle)+1)\cabdrive+1$
\\\hline

\ref{p:TcrHA}
&$(n-1)\circumf+\cabdrive+1$
&$n\circumf+\cabdrive+1$

\\\hline

\ref{p:TcrHAWielandt} &$\wiel(n)$&$n^2-n+1$

\\\hline
\ref{p:TcRLin}&$l(\cycle)(n-2)+n$ &$l(\cycle)(n-1)+n$
\\\hline
\end{tabular}
\caption{Expressions of Proposition~\ref{p:TcrToT1} (with $l(\cycle)$)}
\label{t:Tcr1}
\end{table}

\begin{table}[h]
\centering
\begin{tabular}{|c|c|}
\hline
Prop.& $T^{\gamma(\crit)}_{cr}(\crit)-\gamma(\crit)+1$
\\\hline
\ref{p:cbfn} & $(\gamma-1)(\circumf-1)+(\gamma+1)\cabdrive$
\\\hline
\ref{p:TcRLin} &$\gamma (n-1) +n-|\crit|$
\\\hline
\end{tabular}
\caption{Expressions of Proposition~\ref{p:TcrToT1} (with $\gamma$)}
\label{t:Tcr2}
\end{table}

\begin{proof}[Proof of Theorems~\ref{t:T1ha} and~\ref{t:T1ct}]
Theorems~\ref{t:T1ha} and~\ref{t:T1ct} are combinations of the
bounds in Tables~\ref{t:Tcr1} and~\ref{t:Tcr2} with
Proposition~\ref{p:TcrToT1}. For each s.c.c.~$\crit$ of~$\crit(A)$,
Table~\ref{t:T1} explains which choices of~$\mN$, $\gamma$ and
proposition to bound $T_{cr}^\gamma(\mN)$ should be made.

\begin{table}[h]
\centering
\begin{tabular}{|c|c|c|c|}
\hline
Bound on~$T_1(A,B)$&$\mN$& $\gamma$ & Prop.
\\\hline
\eqref{e:T1haWielandt} & $\cycle$ s.t. $l(\cycle)=\g(\crit)$ & $l(\cycle)=\g(\crit)$
&\ref{p:TcrHAWielandt}, \ref{p:TcRLin}
\\\hline
\eqref{e:T1haDM} &  $\cycle$ s.t. $l(\cycle)=\g(\crit)$ & $l(\cycle)=\g(\crit)$ &\ref{p:TcRLin}

\\\hline
\eqref{e:T1haCB} & $i\in \cycle$ s.t. $l(\cycle)=\g(\crit)$ &
$l(\cycle)=\g(\crit)$ &\ref{p:cbfn}

\\\hline
\eqref{e:T1haDMEp} & $\crit$ & $\gamma(\crit)$ &\ref{p:TcRLin}

\\\hline
\eqref{e:T1haCBEp} & $i\in\crit$ & $\gamma(\crit)$ &\ref{p:cbfn}

\\\hline

\eqref{e:T1ctCD} & $\cycle$ in staircase or $\cycle$ critical  & $l(\cycle)$
&\ref{p:TcrHA}, \ref{p:TcRLin}

\\\hline

\eqref{e:T1ctCB} & any~$i$ in any~$\cycle$ & $l(\cycle)$ &\ref{p:cbfn}

\\\hline

\end{tabular}
\caption{How to deduce the bounds on $T_1$} \label{t:T1}
\end{table}

To obtain bounds~\eqref{e:T1haWielandt}--\eqref{e:T1haCB} we
take, for the representing subgraph $\subcrit$ in
Proposition~\ref{p:TcrToT1}, any collection of critical cycles
such that each s.c.c.\ of $\crit(A)$ contains exactly one cycle
of the collection and each cycle has the minimal length in the
corresponding s.c.c. In the case
of~\eqref{e:T1haDMEp} and \eqref{e:T1haCBEp}, we set
$\subcrit=\crit(A)$. Bounds~\eqref{e:T1ctCD}
and~\eqref{e:T1ctCB} can be obtained from the last column of
Table~\ref{t:T1}. Note that~\eqref{e:T1ctCD} is obtained as the
minimum of two bounds.

The only difficult case is bound~\eqref{e:T1ctW}. Indeed, in the
worst case, cycle~$\cycle$ with length~$n$, we only get
$\tilde{T}^n_{cr}(\cycle)\le n^2-n+1$ by
Proposition~\ref{p:TcrHAWielandt}. instead of $\wiel(n)$. Thus,
Proposition~\ref{p:TcrToT1} would give $T_1\le n^2-n+1$ instead
of $T_1\le \wiel(n)$ and we have to go into more details. The
proof of~\eqref{e:T1ctW} is thus postponed to the end of the
next subsection.
\end{proof}

\subsection{Cycle removal by cycle decomposition}\label{s:TcrHA}
In this section, we present and improve the method of~\cite{HA-99} to prove Propositions~\ref{p:TcrHA} and~\ref{p:TcrHAWielandt}.
It will also be used to prove that~$T_1(A,\bct)\le \wiel(n)$ (Equation~\eqref{e:T1ctW}) at the end of the next subsection.
For any set of walks $W_{\alpha}$ with $\alpha\in S$ for $S$
a subset of natural numbers, let us denote by $\subcrit(\cup_{\alpha\in S} W_{\alpha})$
the subgraph of $\digr(A)$ consisting of
all nodes and edges that belong to some walk $W_{\alpha}$, $\alpha\in S$.

\begin{proof}[Proof of Propositions~\ref{p:TcrHA}
and~\ref{p:TcrHAWielandt}]

To any walk~$W\in\walksnode{i}{j}{\cycle}$, we apply the following
procedure, adapted from~\cite{HA-99}.
\begin{enumerate}
 \item We choose a decomposition of the walk~$W\in\walks{i}{j}$ into
a path $P$ and a set of cycles $\cycle_\alpha$ for $\alpha\in S$ (with $S$ a subset of
natural numbers). Note that~$P$ may be empty. If it is, walk~$W$ is closed.
Then, it has the same start and end node.

We denote by $n_W$ the number of nodes that appear at least once in~$W$ and by
$\cabdrive_W$ the maximum length of an acyclic walk whose edges belong to $W$.


\item We take a subset~$R_1$ of~$S$ with $|R_1|\le n-l(\cycle)$
such that $\subcrit(P\cup\cycle\cup_{\alpha\in R_1}\cycle_\alpha)$ is connected and contains all nodes appearing in~$W$.
This is possible because the connection of~$\subcrit(P\cup \cycle)$ with
all the nodes of $W$ can be ensured by adding at most $n-l(\cycle)$ edges of $W$
to $P\cup \cycle$, and hence by adding to it at most $n-l(\cycle)$ cycles $\cycle_\alpha$, for $\alpha\in S$.

\item Let~$R_2$ be a result of recursively removing from $S\setminus R_1$ sets of indices
whose corresponding cycles have a combined length that is a multiple
of~$\gamma$.

By Lemma~\ref{l:NT}, $\lvert R_2\rvert \leq \gamma-1$. Let~$R$ be $R_1\cup R_2$.

Set~$\circumf_W=\max_{\alpha\in R} l(\cycle_\alpha)$
(circumference of the walk $W$).

 \item If  $\subcrit_0=\subcrit(P\cup \bigcup_{\alpha\in R} \cycle_\alpha)$ is
connected, then we build a walk~$V\in\walksnode{i}{j}{\cycle}$ by starting from~$P$ and successively inserting
(in some order) all cycles
$\cycle_\alpha$ with~$\alpha\in R$.

 \item Otherwise, we build $V\in\walksnode{i}{j}{\cycle}$ by starting from~$P$ and successively inserting (in some order)
all cycles $\cycle_\alpha$ with~$\alpha\in R$, and~$Z$.
\end{enumerate}

By construction, $l(V) \equiv l(W) \pmod \gamma$ in both cases.
Let us bound the length of~$W$.

If~$\subcrit_0$ is connected,
\begin{equation}
\label{e:M0connect}
\begin{split}
& l(V)  = l(P) + \sum_{\alpha\in R} l(\cycle_\alpha)
\\
& \leq \cabdrive_W + \circumf_W (n-l(\cycle)+\gamma -1)=\circumf_W (n-l(\cycle)+\gamma -2) +(\cabdrive_W + \circumf_W)
\end{split}
\end{equation}

If $\subcrit_0$ is not connected, we have $l(V)\le \circumf_W (n-l(\cycle)+\gamma -2) +(\cabdrive_W + \circumf_W) +l(\cycle)$.

But there is some~$\hat{\alpha}\in R$ such that $l(P) + l(\cycle_{\hat{\alpha}})
\leq n_W-1$, because otherwise every~$\cycle_\alpha$ with $\alpha\in
R$ would share a node with~$P$. Because $\lvert R\setminus
\{\hat{\alpha}\}\rvert \leq n-l(\cycle)+\gamma -2$, we have
%
\begin{equation}
\label{e:M0disconnect}
\begin{split}
l(V) & = l(\cycle) +  l(P) + l(\cycle_{\hat{\alpha}}) + \sum_{\substack{\alpha\in
R\\ \alpha \neq \hat{\alpha}}} l(\cycle_\alpha)
\\
& \leq l(\cycle) + (n_W - 1) + (n-l(\cycle)+\gamma -2)\circumf_W
\end{split}
\end{equation}

Finally, we have $l(V)\le l(\cycle) + (n-l(\cycle)+\gamma -2)\circumf_W
+\min(n_W-1,\circumf_W+\cabdrive_W)$ if $M_0$ is not connected, and
$l(V)\le l(\cycle) + (n-l(\cycle)+\gamma -2)\circumf_W
+(\circumf_W+\cabdrive_W-l(\cycle))$.

This gives the following
\begin{lemma}\label{l:TcrHA}
For any cycle~$\cycle$, any divisor~$\gamma$ of~$l(\cycle)$ and any walk~$W\in\walksnode{i}{j}{\cycle}$,
there is a walk~$V\in\walksnode{i}{j}{\cycle}$ with length at most $l(\cycle) + (n-l(\cycle)+\gamma -2)\circumf_W +\max(\min(n_W-1,\circumf_W+\cabdrive_W),\circumf_W+\cabdrive_W-l(\cycle))$
obtained by removing cycles from~$W$ and possibly inserting~$\cycle$ such that
$l(V) \equiv l(W) \pmod \gamma$.

Moreover, if no copy of~$\cycle$ is inserted then $l(V)\le \circumf_W
(n-l(\cycle)+\gamma -1) + \cabdrive_W $
\end{lemma}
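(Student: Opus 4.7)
The plan is to read the lemma directly off the five-step procedure constructed immediately above its statement, so the task reduces to organizing the case analysis and verifying the two length estimates. First I would fix a walk $W\in\walksnode{i}{j}{\cycle}$ and apply the decomposition: write $W=P\cup\bigcup_{\alpha\in S}\cycle_\alpha$ with $P$ a path (possibly empty) from $i$ to $j$, then select $R_1\subseteq S$ with $|R_1|\le n-l(\cycle)$ so that $\subcrit(P\cup\cycle\cup_{\alpha\in R_1}\cycle_\alpha)$ is connected and covers all nodes of $W$, and extract $R_2\subseteq S\setminus R_1$ with $|R_2|\le\gamma-1$ by iteratively removing, via Lemma~\ref{l:NT}, maximal subsets of cycles whose combined length is a multiple of $\gamma$. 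Set $R=R_1\cup R_2$, so $|R|\le n-l(\cycle)+\gamma-1$.

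Next I would produce $V$ by one of two constructions, depending on whether $\subcrit_0=\subcrit(P\cup\bigcup_{\alpha\in R}\cycle_\alpha)$ is connected. If it is, I insert the cycles $\cycle_\alpha$ for $\alpha\in R$ one by one into $P$ in an order consistent with the connectivity, producing a walk $V\in\walks{i}{j}$ that is a subwalk of $W$. If $\subcrit_0$ is disconnected I additionally insert one copy of $\cycle$, which is legitimate because $\subcrit(P\cup\cycle\cup_{\alpha\in R_1}\cycle_\alpha)$ is connected by choice of $R_1$; in both cases $V\in\walksnode{i}{j}{\cycle}$, and since $l(\cycle)$ and the total length of all discarded cycles are both divisible by $\gamma$, we get $l(V)\equiv l(W)\pmod{\gamma}$.

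Finally I would bound $l(V)$. In the connected case,
\begin{equation*}
l(V)=l(P)+\sum_{\alpha\in R}l(\cycle_\alpha)\le \cabdrive_W+(n-l(\cycle)+\gamma-1)\circumf_W,
\end{equation*}
which is exactly the \emph{moreover} statement. In the disconnected case I use the key observation that some $\hat{\alpha}\in R$ gives a cycle $\cycle_{\hat\alpha}$ disjoint from $P$: otherwise every $\cycle_\alpha$ with $\alpha\in R$ would share a node with $P$, making $\subcrit_0$ connected. Since $P$ and $\cycle_{\hat\alpha}$ fit disjointly in the $n_W$ nodes of $W$, we have $l(P)+l(\cycle_{\hat\alpha})\le n_W-1$; on the other hand the trivial bound $l(P)+l(\cycle_{\hat\alpha})\le\cabdrive_W+\circumf_W$ is always available. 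Applying whichever is smaller and bounding the remaining $|R\setminus\{\hat\alpha\}|\le n-l(\cycle)+\gamma-2$ cycles by $\circumf_W$ each, I get
\begin{equation*}
l(V)\le l(\cycle)+(n-l(\cycle)+\gamma-2)\circumf_W+\min(n_W-1,\ \cabdrive_W+\circumf_W).
\end{equation*}
Combining the two cases under a single max yields the stated expression.

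The main obstacle is the disconnected case: one must simultaneously justify (i) that inserting $\cycle$ into the disconnected pieces actually produces a walk, which is exactly what the choice of $R_1$ ensures, and (ii) the existence of $\hat{\alpha}$ disjoint from $P$, which requires the pigeonhole-style observation above. The congruence $l(V)\equiv l(W)\pmod{\gamma}$ in the disconnected case uses that $\gamma\mid l(\cycle)$, which is why this divisibility is assumed in the hypothesis.
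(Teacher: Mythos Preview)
Your proposal is correct and follows essentially the same approach as the paper: the five-step decomposition, the choice of $R_1$ and $R_2$, the connected/disconnected split on $\subcrit_0$, the pigeonhole argument producing $\hat\alpha$, and the resulting length estimates all coincide with the paper's proof. The only point left implicit in both your write-up and the paper is why $V\in\walksnode{i}{j}{\cycle}$ in the connected case; this follows because the connectedness of $\subcrit(P\cup\cycle\cup_{\alpha\in R_1}\cycle_\alpha)$ forces $\subcrit(P\cup_{\alpha\in R_1}\cycle_\alpha)\subseteq\subcrit_0$ to share a node with $\cycle$.
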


Using that~$\circumf_W\le\circumf(\digr(A))$ and~$\cabdrive_W\le\cabdrive(\digr(A))$, we get Proposition~\ref{p:TcrHA}.\\

\if{ When $l(\cycle)=\gamma=n$, $R_1$ is empty and the cycles in~$R_2$
have length at most~$n-1$ (otherwise they would be removed). So
$\circumf_W\le n-1$ and $\circumf_W+\cabdrive_W-l(\cycle))\le n-2$, and
$l(V)\le (n-2)(n-1)+n-1=n^2-n+1$. This is a bound
for~$\tilde{T}_{cr}$ rather than~$T_{cr}$ because if~$l(W)\ge
n^2-n+1$ and no cycle was inserted, then $l(V)\le
(n-1^2+n-1<n^2-n+1\le t$ and cycles were removed that can be
replaced by~$\cycle$. }\fi

When $l(\cycle)=\gamma=n$, $R_1$ is empty and the cycles in~$R_2$
have length at most~$n-1$ (otherwise they would be removed). So
we use~\eqref{e:M0connect} with $\circumf_W\le n-1$, and we
obtain $l(V)\leq (n-1)(n-1)+n-1=n^2-n$. Hence $T_{cr}^n(\cycle)\leq
n^2-n$ and $\tilde{T}_{cr}(\cycle)\leq n^2-n+1$.
Proposition~\ref{p:TcrHAWielandt} is proved.
\end{proof}

\subsection{Cycle removal by arithmetic method}\label{s:TcrLin}
In this section, we present a new method to bound~$T_{cr}$ leading to Proposition~\ref{p:TcRLin}.

We begin with:
\begin{lemma}\label{LemModg}
Let $\gamma\in\Nat$ and let
$W\in\walks{i}{j}$.
Then there exists a walk $W'\in\walks{i}{j}$ obtained from~$W$ by removing
cycles such that
$l(W')\equiv l(W)\pmod {\gamma}$ and each node appears at most $\gamma$~times
in~$W'$.
\end{lemma}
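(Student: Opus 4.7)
The plan is to induct on the length of $W$, shortening $W$ whenever some node has too many occurrences. Concretely, suppose some node $v$ appears at positions $p_0<p_1<\cdots<p_k$ in $W$ with $k\geq\gamma$. Writing $W=W_0\,C_1\,C_2\cdots C_k\,W_k$, where $W_0$ is the prefix of $W$ from $i$ to the first occurrence of $v$, $W_k$ is the suffix from the last occurrence of $v$ to $j$, and $C_s$ is the (nonempty) closed walk at $v$ between positions $p_{s-1}$ and $p_s$, I can focus on the first $\gamma$ closed walks $C_1,\ldots,C_\gamma$.

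Apply Lemma~\ref{l:NT} to the integers $l(C_1),\ldots,l(C_\gamma)$ to obtain a nonempty index set $I\subseteq\{1,\ldots,\gamma\}$ with $\sum_{s\in I}l(C_s)\equiv 0\pmod\gamma$. Form $W''$ from $W$ by deleting exactly the closed walks $C_s$ for $s\in I$: since they are consecutive segments of $W$ starting and ending at~$v$, deletion produces a well-defined walk from $i$ to $j$. Each $C_s$ decomposes into cycles (by the elementary fact recalled in Section~\ref{s:prel}), so $W''$ is obtained from $W$ by removing cycles, hence is a legitimate outcome of the operation allowed by the lemma. By construction $l(W'')=l(W)-\sum_{s\in I}l(C_s)\equiv l(W)\pmod\gamma$, and since $\sum_{s\in I}l(C_s)\geq\gamma\geq 1$, we have $l(W'')<l(W)$.

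Iterate this reduction: as long as some node appears more than $\gamma$ times in the current walk, apply the step above. Strict decrease of the length guarantees termination after finitely many steps, and the resulting walk $W'$ satisfies all three conclusions (a walk from $i$ to $j$ obtained from $W$ by removing cycles, $l(W')\equiv l(W)\pmod\gamma$, and every node appearing at most $\gamma$ times). The only mild subtlety is verifying that deleting a closed walk counts as ``removing cycles''; this is ensured by the cycle decomposition of closed walks, so no real obstacle arises. The key ingredient is really just the pigeonhole principle of Lemma~\ref{l:NT}, which is what forces the modular length condition to be preserved.
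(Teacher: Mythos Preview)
Your proof is correct, but it takes a slightly different route from the paper's. The paper argues directly on positions: writing $W=(i_0,\ldots,i_L)$, whenever a node occurs at two positions $a<b$ with $a\equiv b\pmod\gamma$, it deletes the closed segment $(i_a,\ldots,i_b)$. Iterating until no such pair remains, each node's occurrence positions are pairwise incongruent modulo~$\gamma$, hence there are at most~$\gamma$ of them by plain pigeonhole. Your argument instead groups the first $\gamma$ closed walks at an over-represented node and invokes Lemma~\ref{l:NT} to find a subset whose total length is a multiple of~$\gamma$. This works, but it is a detour: the standard proof of Lemma~\ref{l:NT} via partial sums, applied to $l(C_1),\ldots,l(C_\gamma)$, \emph{is} the pigeonhole on positions that the paper uses directly, and it in fact always produces a contiguous subset $I$. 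So the paper's version is shorter and avoids the (harmless but unnecessary) worry you raise about removing a possibly non-contiguous collection of closed walks. One small wording issue: your phrase ``consecutive segments'' does the work only because all the $C_s$ share the same endpoints~$v$, not because the chosen subset $I$ is contiguous; you might make that explicit.
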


\begin{proof}
Consider $W$ as a sequence of adjacent nodes $(i_0,\cdots,i_L)$,
where $L$ is the length of the walk.

If a given node appears twice, first as $i_a$ and then as $i_b$ and
if $a\equiv b\pmod {\gamma}$, then the subwalk
$(i_0,\cdots,i_a,i_{b+1},\cdots, i_L)$ is strictly shorter than $W$ and has
the same length modulo~$\gamma$.

Iterating this process, we get a sequence of subwalks of~$W$. Since
the sequence of length is strictly decreasing, the sequence is
finite and we denote the last walk by~$W'$.

Obviously, $l(W')\equiv l(W)\pmod {\gamma}$ and a node does appear twice
as~$i_a$ and $i_b$ only if $a\not\equiv b\pmod {\gamma}$, so
the pigeonhole principle implies that it appears at most $\gamma$~times
(otherwise there would exist $i_a$ and $i_b$ with $a\equiv b\pmod {\gamma}$).
\end{proof}

\begin{proof}[Proof of Proposition~\ref{p:TcRLin}]
We take~$W\in\walksnode{i}{j}{\subcrit}$ and construct a subwalk~$V$ with length at most~$\gamma n +n-n_1-1$ by the following steps.

1. Find the first occurrence of a node of $\subcrit$ in $W$, and
denote this node by~$k$. Let $W_1$ be the subwalk of $W$ connecting
$i$ to $k$, and let $W_2$ be the remaining subwalk. So we have
\begin{equation}\label{Eq:W1W2}
W_1\in\walks{i}{k} ,~ W_2\in\walks{k}{j} ,~ l(W_1)+l(W_2)=l(W)
\end{equation}

2. As long as there is a node $\ell$ that appears twice in~$W_1$ and
at least once in $W_2$, we can write $W_1=U_1\cdot U_2\cdot U_3$ and
$W_2=V_1\cdot V_2$, where $U_1,U_2,V_1$ end with~$\ell$ and
$U_2,U_3,V_2$ start with~$\ell$. Thus, we can replace $W_1$ by
$U_1\cdot U_3$ and $W_2$ by $V_1\cdot U_2\cdot V_2$.
Equation~\eqref{Eq:W1W2} still holds, but now~$i$ appears only once
in~$W_1$. Step~2 is over when all nodes that appear more than once
in~$W_1$ do not appear in~$W_2$. Let us denote the resulting walks
by~$W_3$ and~$W_4$ respectively.

3. Apply Lemma~\ref{LemModg} to $W_3$ and $W_4$, obtaining
$W'_1$ and~$W'_2$ respectively.

4. Set $V=W'_1\cdot W'_2$.

Obviously,
$l(V)\equiv l(W_1)+l(W_2)\equiv l(W)\pmod {\gamma}$.

Now we take a node of $V$ and bound the number of its appearances.

\begin{itemize}
\item[(1)] If it is a node of~$\subcrit\setminus\{k\}$, then it appears only in~$W'_2$, thus at most $\gamma$~times.
$k$ appears once in~$W'_1$, as ending node, and at most $\gamma$
times in~$W'_2$. In the concatenation of the walks, one
occurrence disappears, so all nodes of~$\subcrit$ appear at
most $\gamma$~times.

\item[(2)] If it is a node of~$W'_2$, then it is also a node of~$W_4$, it appears at most once in~$W_3$, thus also in~$W'_1$.
Therefore it appears at most $\gamma+1$~times in~$V$.

\item[(3)] If it is not a node of~$W'_2$, then it appears only in~$W'_1$, thus at most $\gamma$~times.
\end{itemize}

The total number of appearances of all nodes in~$V$ is at most
$(\gamma+1)(n-n_1)+\gamma n_1=\gamma n+(n-n_1)$, so $l(V)$ is bounded by
$\gamma n+(n-n_1-1)$, as claimed.
\end{proof}


%
%
\begin{proof}[Proof of Bound~\eqref{e:T1ctW}]
To prove bound~\eqref{e:T1ctW}, we apply Lemma~\ref{l:SiP} as before
and the difficulty only comes from Lemma~\ref{l:SdPct} that is not
good enough.

To prove that inequality~\eqref{e:SdP} holds with~$t\ge\wiel(n)$
and~$\digr=\ctgr$, we do as in the proof of
Proposition~\ref{p:TcrToT1}: we remove cycles from a walk~$W$ to
replace them by cycles with greater weight, following the staircase
given by Lemma~\ref{l:staircase}. In this process, the walks to
reduce have no critical node.


We apply Lemma~\ref{l:TcrHA} with $\gamma=l(\cycle)$ and we obtain
\begin{equation*}
\begin{split}
l(V)&\le (n-2)\circumf_W+l(\cycle)+\max(n_W-1,\circumf_W+\cabdrive_W-l(\cycle))\\
&\le n_W-1+n=(n-1)n_W+n-1.
\end{split}
\end{equation*}
Since $W$ has no critical node, $n_W\le n-1$, and this bound is less
than~$\wiel(n)$ except when $n_W=n-1$.

But in this last case one has a critical loop on the only
critical node and the rest of the nodes are in $W$. Let $\cycle$ be
the penultimate cycle of the staircase, it shares nodes with $W$
and contains the unique critical node. The weight of this cycle
is greater than or equal to that of all cycles in $W$. Applying
Proposition~\ref{p:TcRLin} with~$\subcrit=\cycle$ and~$\gamma=1$, it
is possible to reduce the walk to a length at most~$2n-l(\cycle)-1$,
insert~$\cycle$ and then as many critical loops as necessary to get
back to a walk with length~$t$.

This is possible if $t\ge 2n-1$. Thus, Equation~\eqref{e:T1ctW} holds true for any~$n$.
\end{proof}

\section{Proof of Theorems~\ref{t:T2} and~\ref{t:T2v}}\label{s:TcrToT2}

Theorem~\ref{t:T2} follows from the bounds on~$T_{cr}$ together with
the following proposition.
\begin{proposition}\label{p:TcrToT2}
Let $A$ be an irreducible matrix, $\subcrit$ be a representing
subgraph of~$\crit(A)$ with cyclicity~$\gamma$ and $B$ be
subordinate to~$A$ such that~$\lambda(B)\neq \bzero$. Then
$$T_2(A,B)\le \frac{T_{cr}^\gamma(\subcrit)(\lambda(A)
-\min_{kl}a_{kl})
+(\max_{kl}b_{kl}-\lambda(B))\cabdrive(\digr(B))}{\lambda(A)-\lambda(B)}.$$

If moreover $A$ has only finite entries, then equations \eqref{e:T2fin}~and~\eqref{e:T2finSyK} hold.
\end{proposition}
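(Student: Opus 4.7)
The plan is to exploit the optimal-walk interpretation on both sides of the desired inequality $B^t\le\lambda(A)^{\otimes t}\otimes CS^tR$ and then compare. After normalizing so that $\lambda(A)=\bunity=0$, the strategy is to derive an upper bound on $p(W)$ for walks $W$ in $\digr(B)$ of length $t$, exhibit a specific low-length walk $V$ representing $CS^tR$ whose weight we can lower-bound, and reconcile the two estimates.

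First I would handle the upper bound: any walk $W$ in $\digr(B)$ of length $t$ decomposes into a path of length at most $\cabdrive_B$ together with a union of cycles, each of mean weight at most $\lambda(B)$. Since $\max_{kl}b_{kl}\ge\lambda(B)$, this gives
\[
p(W)\le\lambda(B)\,t+\bigl(\max_{kl}b_{kl}-\lambda(B)\bigr)\cabdrive_B.
\]
Next, by Theorem~\ref{t:representation}, $\lambda(A)^{\otimes t}\otimes(CS^tR)_{ij}$ equals the maximum of $\lambda(A)(t-l(V))+p(V)$ over walks $V\in\walksnode{i}{j}{\mN}$ with $l(V)\equiv t\pmod{\gamma}$. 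To exhibit such a $V$ of controlled length, I would start from any sufficiently long walk in $\digr(A)$ from $i$ to $j$ that passes through $\subcrit$ (which exists since $A$ is irreducible; if $B^t_{ij}$ is finite then $i,j$ are connected in $\digr(A)$, and a detour inserts the required visit to $\subcrit$), and apply the cycle-removal procedure of Definition~\ref{def:TcrEp} to obtain $V$ with $l(V)\le T_{cr}^{\gamma}(\subcrit)$ and $l(V)\equiv t\pmod{\gamma}$. Bounding each edge of $V$ by $\min_{kl}a_{kl}$ yields $p(V)\ge l(V)\min_{kl}a_{kl}$, hence
\[
\lambda(A)(t-l(V))+p(V)\ge\lambda(A)\,t-\bigl(\lambda(A)-\min_{kl}a_{kl}\bigr)T_{cr}^{\gamma}(\subcrit).
\]

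Combining these two estimates, the inequality $\lambda(A)^{\otimes t}\otimes(CS^tR)_{ij}\ge B^t_{ij}$ holds as soon as
\[
(\lambda(A)-\lambda(B))\,t\ge\bigl(\lambda(A)-\min_{kl}a_{kl}\bigr)T_{cr}^{\gamma}(\subcrit)+\bigl(\max_{kl}b_{kl}-\lambda(B)\bigr)\cabdrive_B,
\]
which is precisely the stated main bound. For the refinements \eqref{e:T2fin} and \eqref{e:T2finSyK} in the finite-entries case, I would exploit that $\digr(A)$ is complete: the walk $V$ can be chosen directly as $i\to\ell\to(\text{critical closed walk})\to\ell\to j$ for some $\ell\in\mN$, collapsing the factor $T_{cr}^{\gamma}(\subcrit)$ to $2$. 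Then \eqref{e:T2finSyK} follows by keeping $\cabdrive_B$ separate in the upper bound on $p(W)$, whereas \eqref{e:T2fin} is obtained by refining that upper bound to use $\min_{kl}b_{kl}$ instead of $\cabdrive_B$ when $B$ also has only finite entries on its nonzero support.

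The main obstacle is ensuring the constructed $V$ has length exactly congruent to $t$ modulo $\gamma$. This is handled by a careful insertion of a closed walk at some critical node in the appropriate strongly connected component $\subcrit_l$ before applying cycle removal: padding by multiples of the length of such a closed walk adjusts the residue by multiples of $\gamma(\subcrit_l)$, and combining such adjustments across the representing subgraph yields all residues achievable for walks from $i$ to $j$ through $\subcrit$. In particular, whenever the right-hand side $B^t_{ij}$ is finite, the walk $W$ itself witnesses compatibility of the residue $t\bmod\gamma$ with walks $i\to j$, so the existence of $V$ with the required residue is guaranteed.
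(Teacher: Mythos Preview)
Your argument for the main bound is correct and follows the same line as the paper's: the paper packages the upper bound on $b_{ij}^{(t)}$ as Lemma~\ref{l:BndsBt}, the lower bound on $(CS^tR)_{ij}$ as inequality~\eqref{e:CsrMin} of Lemma~\ref{l:BndsCSR}, and the finiteness of $(CS^tR)_{ij}$ whenever $b_{ij}^{(t)}$ is finite as Lemma~\ref{l:FiniteEntriesEnough} (your ``detour'' argument, made precise by concatenating $V^{\gamma}W$ where $V$ is a closed walk through~$i$ and~$\subcrit$).

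For the finite-entries refinements your sketch has a genuine gap. The walk $i\to\ell\to(\text{critical closed walk})\to\ell\to j$ has length $2$ plus a multiple of the cyclicity of the strongly connected component of $\ell$ in~$\crit(A)$; this residue class need not contain~$t$ modulo~$\gamma$, so such a~$V$ need not lie in $\walkslennode{i}{j}{t,\gamma}{\mN}$ at all, and Theorem~\ref{t:representation} gives no lower bound from it. The paper circumvents this with a non-obvious trick (Lemma~\ref{l:nonnegativep}): from a critical cycle~$\cycle$ one splits $\cycle^{t-2}$ into $l(\cycle)$ consecutive pieces each of length exactly~$t-2$; at least one piece~$W_0$ has nonnegative weight, and then $(i,r)\cdot W_0\cdot(s,j)$ has length exactly~$t$, visits $\crit(A)$, and has weight $\ge 2\min_{kl}a_{kl}$. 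This produces~\eqref{e:CsrMinFiniteA}. For~\eqref{e:T2finSyK} the paper does \emph{not} merely ``keep $\cabdrive_B$ separate'' in the upper bound but instead sharpens the lower bound on $(CS^tR)_{ij}$ to~\eqref{e:CsrMinFiniteASyK}, routing the walk through an optimal $\tilde{B}$-walk realising $\tilde{b}^*_{ij}$ before applying the nonnegative-walk trick; this is what makes the $\tilde{b}^*_{ij}$ terms cancel and yields the additive $+\cabdrive_B$ rather than the multiplicative factor your approach would produce.
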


We begin with the following lemmas.
\begin{lemma}\label{l:FiniteEntriesEnough}
Let $A\in\Rpnn$ be an irreducible matrix, and $C,S,R$ be defined
relative to any completely
reducible~$\subcrit\subseteq\crit(A)$. For any~$B$ subordinate
to~$A$ and any $t$, if $b_{ij}^{(t)}$ is finite, then
$\left(CS^tR\right)_{ij}$ is finite too.
\end{lemma}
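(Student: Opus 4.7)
The plan is to invoke Theorem~\ref{t:representation} with $\gamma := \gamma(\subcrit)$ and $\mN$ any fixed set of critical nodes containing one representative per s.c.c.\ of $\subcrit$; this identifies $(CS^tR)_{ij}$ with the supremum weight over $\walkslennode{i}{j}{t,\gamma}{\mN}$, so it is enough to exhibit a single walk from $i$ to $j$ that visits some node of $\mN$ and has length congruent to $t$ modulo $\gamma$. The finiteness of $b_{ij}^{(t)}$ combined with the entrywise inequality $B \le A$ built into the definition of ``subordinate'' yields $a_{ij}^{(t)}$ finite and hence a walk of length $t$ from $i$ to $j$ in $\digr(A)$; this anchors $t$ in the residue class of walk-lengths from $i$ to $j$ modulo $\gamma_A := \gamma(\digr(A))$, which is well-defined because $A$ is irreducible.

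I would then fix an arbitrary $k \in \mN$ and, using strong connectivity of $\digr(A)$, pick walks $P_1 \colon i \to k$ and $P_2 \colon k \to j$ in $\digr(A)$. Every walk of the form $P_1 Q P_2$ with $Q$ a closed walk at $k$ in $\digr(A)$ lies in $\walksnode{i}{j}{\mN}$, so the only remaining task is to choose $Q$ so that the total length is $\equiv t \pmod{\gamma}$. The key arithmetic ingredient is that $\gamma_A$ divides $\gamma$: each s.c.c.\ $\subcrit_l$ of $\subcrit$ is a strongly connected subgraph of the strongly connected $\digr(A)$, so $\gamma_A \mid \gamma(\subcrit_l)$ (a subgraph's cyclicity is always a multiple of the ambient one), and taking the l.c.m.\ over $l$ yields $\gamma_A \mid \gamma(\subcrit) = \gamma$. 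Moreover, since $t$ and $l(P_1)+l(P_2)$ are both lengths of walks from $i$ to $j$ in $\digr(A)$, they agree modulo $\gamma_A$, so $\gamma_A$ divides $t - l(P_1) - l(P_2)$.

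Closed walks at $k$ in the strongly connected $\digr(A)$ realise every sufficiently large multiple of $\gamma_A$ as length, and the solutions $l(Q)$ of the congruence $l(Q) \equiv t - l(P_1) - l(P_2) \pmod{\gamma}$ inside the multiples of $\gamma_A$ form an arithmetic progression of common difference $\gamma$, non-empty precisely because $\gamma_A$ divides both $\gamma$ and $t - l(P_1) - l(P_2)$. Taking any $l(Q)$ far enough along this progression guarantees that $Q$ is realisable, and then $P_1 Q P_2 \in \walkslennode{i}{j}{t,\gamma}{\mN}$ forces $(CS^tR)_{ij}$ to be finite. The main (minor) technical point is this ``sufficiently large'' clause, handled by the standard fact that in a strongly connected digraph of cyclicity $\gamma_A$ every sufficiently large multiple of $\gamma_A$ occurs as the length of a closed walk at any fixed node.
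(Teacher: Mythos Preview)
Your argument is correct, but it is more elaborate than necessary. The paper's proof keeps the length-$t$ walk $W$ from $i$ to $j$ intact and simply prepends $V^{\gamma}$, where $V$ is any closed walk at $i$ that visits a node of $\subcrit$ (such $V$ exists by irreducibility). The resulting walk $V^{\gamma}W$ has length $t+\gamma\,l(V)\equiv t\pmod{\gamma}$ and visits $\subcrit$, so Theorem~\ref{t:representation} applies immediately. By contrast, you discard $W$ after extracting the residue of $t$ modulo $\gamma_A$, build a fresh walk $P_1QP_2$, and then need the divisibility $\gamma_A\mid\gamma$ and the ``eventually all large multiples of $\gamma_A$ occur as closed-walk lengths'' fact to match the length modulo $\gamma$. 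Your route works, but the paper's trick of keeping $W$ as a suffix makes the congruence automatic and avoids any appeal to the cyclicity of $\digr(A)$.
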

\begin{proof}
If $b_{ij}^{(t)}$ is finite, so is~$a^{(t)}_{ij}$,
By the optimal walk interpretation~\eqref{e:walksense1}, there is a walk $W$
connecting $i$ to
$j$, of length $t$, such that $p(W)=a_{ij}^{(t)}$. As $A$ is
irreducible, there is a closed walk $V$ containing $i$ and a
node $k$ of $\subcrit$. If $\gamma$ is the cyclicity of $\subcrit$
then $V^{\gamma}W\in\walkslennode{i}{j}{t,\gamma}{\subcrit}$,
and $(CS^tR)_{ij}\neq\bzero$ by~\eqref{e:representation}.
\end{proof}

\begin{lemma}\label{l:BndsBt}
For any~$B\in\Rpnn$ and any~$t\in\Nat$, let $\tilde{B}$ be $B-\lambda(B)$, we have:
$$B^t\le t\lambda(B)\otimes \tilde{B}^* \textnormal{ and }
\tilde{b}_{ij}^*\le\cabdrive(\digr(B))\left(\max_{kl}b_{kl}-\lambda(B)\right)
\le(n_B-1)\|B\|$$

If $B$ has only finite entries, then $\tilde{b}_{ij}^*\le (\lambda(B)-\min_{kl}b_{kl})$.
\end{lemma}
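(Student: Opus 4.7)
The plan is to reduce to the normalized matrix $\tilde{B}$, which satisfies $\lambda(\tilde{B}) = \bunity$ by construction, and then exploit the optimal-walk interpretation of Kleene stars.

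For the first inequality, I write $B = \lambda(B) \otimes \tilde{B}$ so that $B^t = \lambda(B)^{\otimes t} \otimes \tilde{B}^t$. Since $\lambda(\tilde{B}) = \bunity$, the Kleene series $\tilde{B}^* = \bigoplus_{s\ge 0} \tilde{B}^s$ converges (as discussed after~\eqref{klsdef}), and $\tilde{B}^t$ appears as one of its summands, so $\tilde{B}^t \le \tilde{B}^*$. This yields $B^t \le t\lambda(B) \otimes \tilde{B}^*$.

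For the first bound on $\tilde{b}_{ij}^*$, I will appeal to the walk interpretation~\eqref{e:walksense2}: $\tilde{b}_{ij}^* = p\bigl(\walks{i}{j}\bigr)$ computed in the digraph $\digr(\tilde{B}) = \digr(B)$ with edge weights shifted by $-\lambda(B)$. Because $\lambda(\tilde{B}) = \bunity$, every closed walk has weight $\le \bunity$, so the supremum is attained on a walk with no nonempty subcycle, i.e., on a path, whose length is at most $\cabdrive(\digr(B))$. Each edge weight is bounded by $\max_{kl} b_{kl} - \lambda(B)$, which gives the first inequality. The bound $(n_B-1)\|B\|$ then follows from $\cabdrive(\digr(B)) \le n_B - 1$ together with $\max_{kl} b_{kl} - \lambda(B) \le \|B\|$; this last inequality holds because the smallest edge of a maximum-mean cycle is at most $\lambda(B)$, hence $\min_{kl} b_{kl} \le \lambda(B)$.

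The finite-entry bound requires a different trick, which is the main subtlety. When $B$ has only finite entries, $\digr(B)$ is the complete digraph, so for $i \neq j$ any path $W$ of length $l$ from $i$ to $j$ can be closed into a cycle of length $l+1$ by appending the edge $(j, i)$. Since this cycle has mean at most $\lambda(B)$, we get $p_B(W) + b_{ji} \le (l+1)\lambda(B)$, that is, $p_{\tilde B}(W) = p_B(W) - l\lambda(B) \le \lambda(B) - b_{ji} \le \lambda(B) - \min_{kl} b_{kl}$. For $i = j$, one has $\tilde{b}_{ii}^* = \bunity$ (realized by the empty walk, since all closed walks have non-positive weight), which is trivially $\le \lambda(B) - \min_{kl} b_{kl}$. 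Taking the maximum over $W \in \walks{i}{j}$ then yields the claimed bound.
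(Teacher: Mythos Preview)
Your proof is correct and follows essentially the same route as the paper: the first inequality via $\tilde{B}^t\le\tilde{B}^*$, the $\cabdrive$-bound by reducing to a path (cycles in $\tilde{B}$ have nonpositive weight), and the finite-entry case by closing the optimal walk with the edge $(j,i)$ to form a closed walk of mean at most~$\lambda(B)$. The paper compresses the first two steps into a single reference to the optimal-walk interpretations~\eqref{e:walksense1}--\eqref{e:walksense2}, and does not separate out the $i=j$ case, but the argument is the same.
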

\begin{proof}
The first part of the claim immediately follows from the optimal
walk interpretation~\eqref{e:walksense1} and~\eqref{e:walksense2}.

For the second part, observe that $\tilde{b}^*_{ij}$ is equal to
$p(W)-\lambda(B)l(W)$ for some walk $W$ connecting $i$ to $j$ in
$B$. As $b_{ji}\neq\bzero$ we have
$p(W)\leq\lambda(B)(l(W)+1)-b_{ji}$, hence
$\tilde{b}^*_{ij}\leq\lambda(B)-b_{ji}$ and the second part of the
claim.
\end{proof}

\begin{lemma}\label{l:BndsCSR}
Let $A\in\Rpnn$ be a matrix with~$\lambda(A)=\1$, $C,S,R$ be defined relatively to
$\crit(A)$, let $\subcrit$ be a representing subgraph of~$\crit(A)$
and $\gamma$ be a multiple of the cyclicity of~$\subcrit$.

For any~$t\in\Nat$, the finite entries of~$CS^tR$ satisfy
\begin{equation}\label{e:CsrMin}
 (CS^tR)_{ij}\ge T^{\gamma}_{cr}(\subcrit)\min_{kl}a_{kl}
\end{equation}

If $A$ has only finite entries, then for all~$i,j$ we have:
\begin{align}
 (CS^tR)_{ij}&\ge 2\min_{ij}a_{ij}\label{e:CsrMinFiniteA}\\
 (CS^tR)_{ij}&\ge 2\min_{ij}a_{ij}+\tilde{b}^*_{ij}+\cabdrive_B\lambda(B)\label{e:CsrMinFiniteASyK}\\
 (CS^tRv)_{i}&\ge \min_{ij}a_{ij}+\min_jv_j\label{e:CsrvMinFiniteA}
\end{align}
\end{lemma}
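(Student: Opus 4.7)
The unifying strategy is to apply Theorem~\ref{t:representation}, interpreting $CS^tR$ with respect to the representing subgraph $\subcrit$ rather than $\crit(A)$ (justified by Corollary~\ref{c:csr-indep}). This gives
\[
(CS^tR)_{ij} = p\bigl(\walkslennode{i}{j}{t,\gamma}{\mN}\bigr),
\]
where $\mN$ contains one node from each s.c.c.\ of $\subcrit$. Each bound will be established by exhibiting a walk in this set whose weight meets the claim. We use throughout that $\lambda(A)=\1$ implies every critical cycle has weight exactly~$\1$, so closed walks in $\subcrit$ have weight~$\1$, and in particular $\min_{kl}a_{kl}\le\1$.

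For~\eqref{e:CsrMin}, finiteness of $(CS^tR)_{ij}$ yields a walk $W\in\walkslennode{i}{j}{t,\gamma}{\mN}$. Applying Definition~\ref{def:TcrEp} to $W$ (or keeping $V=W$ if $l(W)<T^\gamma_{cr}(\subcrit)$) produces a walk $V\in\walksnode{i}{j}{\subcrit}$ with $l(V)\le T^\gamma_{cr}(\subcrit)$ and $l(V)\equiv t\pmod\gamma$. If $V$ does not yet visit $\mN$, at some node of $V$ inside a s.c.c.\ $\subcrit_m$ of $\subcrit$ we insert a closed subwalk visiting $\mN\cap\subcrit_m$ of length a multiple of $\gamma$; this insertion preserves the congruence class mod $\gamma$ and has weight~$\1$. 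Since every edge has weight at least $\min_{kl}a_{kl}$ and $\min_{kl}a_{kl}\le\1$,
\[
p(V) \ge l(V)\cdot\min_{kl}a_{kl} \ge T^\gamma_{cr}(\subcrit)\cdot\min_{kl}a_{kl}.
\]

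When $A$ has only finite entries, every edge weight is at least $\min_{kl}a_{kl}$. For~\eqref{e:CsrMinFiniteA}, fix some $k\in\mN$ and consider the walk $i\to k\to(\text{closed walk at }k\text{ in }\subcrit)\to j$; its weight is $a_{ik}+\1+a_{kj}\ge 2\min_{kl}a_{kl}$, and the length of the closed walk (a multiple of $\gamma(\subcrit_m)$ if $k\in\subcrit_m$), together with the choice of $k$ among the $\mN$-nodes of different s.c.c.'s, is used to force the total length to be $\equiv t\pmod\gamma$. Bound~\eqref{e:CsrMinFiniteASyK} follows by concatenating this walk with an optimal walk in $B$ from $i$ to $j$, which by Lemma~\ref{l:BndsBt} has length at most $\cabdrive_B$ and weight at least $\tilde{b}^*_{ij}+\cabdrive_B\lambda(B)$ (using $\lambda(B)\le\1$). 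For~\eqref{e:CsrvMinFiniteA}, observe that $(CS^tRv)_i\ge(CS^tR)_{ij_0}+v_{j_0}$ for any chosen $j_0\in\subcrit$; a walk of the form ``one edge from $i$ followed by a walk in $\subcrit$ through $\mN$ ending at $j_0$'' uses only one non-critical edge, so its weight is $\ge\min_{kl}a_{kl}$, which combined with $v_{j_0}\ge\min_j v_j$ yields the bound.

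The main technical obstacle is forcing each constructed walk's length to satisfy $l(\cdot)\equiv t\pmod\gamma$ without sacrificing the weight bound. This is handled by combining two degrees of freedom: the choice of which s.c.c.\ $\subcrit_m$ of $\subcrit$ to route through (with differing cyclicities $\gamma(\subcrit_m)$ dividing $\gamma$) and the choice of where to enter and exit $\subcrit$, together with insertions of closed walks in $\subcrit$ of length a multiple of $\gamma$ (each of weight~$\1$). Using that each $\subcrit_m$ is strongly connected with cyclicity $\gamma(\subcrit_m)$, walks between given endpoints inside $\subcrit_m$ exist for all sufficiently large lengths in a fixed residue class modulo $\gamma(\subcrit_m)$; varying the endpoints covers all such classes, so every residue modulo $\gamma$ is attainable.
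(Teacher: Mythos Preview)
Your argument for~\eqref{e:CsrMin} is essentially the paper's: reduce a witness walk via the cycle removal threshold and bound its weight edge by edge. The extra detour through~$\mN$ is unnecessary (you may take $\mN$ to be all nodes of~$\subcrit$ in Theorem~\ref{t:representation}), but harmless.

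For the finite-entry bounds~\eqref{e:CsrMinFiniteA}--\eqref{e:CsrvMinFiniteA}, however, there is a genuine gap. Your construction hinges on a ``middle'' walk inside~$\subcrit$ (or~$\crit(A)$) having weight~$\1$. This is only guaranteed for \emph{closed} critical walks; an open walk $k_1\to k_2$ inside a critical s.c.c.\ can have strictly negative weight (and indeed must, whenever the reverse walk has positive weight). Now you face a dilemma. If you insist on closed walks at a single node~$k$, the middle part has weight~$\1$, but the total length lies in $2+\gamma(\subcrit_m)\mathbb{Z}$, which need not contain $t\bmod\gamma$ (take a single s.c.c.\ with $\gamma(\subcrit_1)=\gamma=3$ and $t\equiv 0$). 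If instead you vary entry/exit points $k_1\neq k_2$ as your final paragraph suggests, you can hit every residue mod~$\gamma$, but you lose control of the weight of the open walk $k_1\to k_2$, and the bound $p(W)\ge 2\min_{kl}a_{kl}$ no longer follows. The same issue breaks your argument for~\eqref{e:CsrvMinFiniteA}. In addition, your construction for~\eqref{e:CsrMinFiniteASyK} concatenates two walks from~$i$ to~$j$, which does not yield a walk from~$i$ to~$j$; the paper instead routes $i\to r\to\cdots\to s\to i$ first and then appends an optimal $\tilde{B}$-walk $i\to j$.

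The paper resolves this with a short averaging trick (Lemma~\ref{l:nonnegativep}): for a critical cycle~$\cycle$ and any integer~$m$, the walk~$\cycle^{m}$ decomposes into $l(\cycle)$ consecutive pieces each of length~$m$ whose weights sum to~$0$, so at least one piece~$W_0$ has $p(W_0)\ge 0$. This produces, for \emph{every} length~$m$, an open walk in~$\crit(A)$ of that exact length and nonnegative weight. Using periodicity of $CS^tR$ to assume $t$ large, one then takes $W=(i,r)\cdot W_0\cdot(s,j)$ of length exactly~$t$, sidestepping the residue-class issue entirely.
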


Before proving this lemma, let us state another one to use for the matrices with finite entries.
\begin{lemma}\label{l:nonnegativep}
Let $A$ be a matrix with~$\lambda(A)=\1$, then for any integer~$m$ there is a walk~$W_0$ with length~$m$ and nonnegative weight on~$\digr(A)$.
\end{lemma}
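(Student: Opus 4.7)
The plan is to build $W_0$ entirely out of a single critical cycle. Since $\lambda(A)=\1$, definition~\eqref{mcgm} guarantees the existence of a cycle $\cycle^*=(v_0,v_1,\dots,v_{l^*-1},v_{l^*}=v_0)$ in $\digr(A)$ with $l^*\geq 1$ and $p(\cycle^*)=0$. Let $p_j$ denote the sum of the weights of the first $j$ edges of $\cycle^*$, so that $p_0=p_{l^*}=0$, and write the target length as $m=q\,l^*+r$ with $q\geq 0$ and $0\leq r<l^*$. The case $m=0$ is handled by the empty walk at any node.

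For general $m\geq 1$, I would take $W_0$ to be the walk that starts at some node $v_s$ of $\cycle^*$, traverses $\cycle^*$ in full $q$ times (each round contributing weight $0$ and returning to $v_s$), and then follows $r$ further edges along $\cycle^*$ from $v_s$, wrapping past $v_0$ if necessary. By construction $l(W_0)=q\,l^*+r=m$, and since the first $q$ full rounds contribute no weight, $p(W_0)$ is exactly the weight of the final $r$-edge tail.

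The only non-mechanical step --- and the one deserving attention --- is the choice of $s$ that makes this tail nonnegative. I would pick $s\in\{0,\dots,l^*-1\}$ minimizing $p_s$. Extending $p$ periodically by $p_{j+l^*}=p_j$ (which is consistent because $p_{l^*}=0$), the tail of length $r$ starting at $v_s$ has weight $p_{s+r}-p_s$, and this is $\geq 0$ by the very choice of $s$. This is the standard cyclic-rotation observation for a zero-sum sequence, and it is the only real content of the argument; no finiteness assumption on the entries of $A$ is needed, only the existence of a zero-mean cycle in $\digr(A)$.
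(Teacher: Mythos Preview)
Your proof is correct and follows essentially the same approach as the paper: build $W_0$ as a length-$m$ arc of repetitions of a single critical cycle. The only cosmetic difference is in the final step---the paper cuts $\cycle^{m}$ into $l(\cycle)$ consecutive pieces of length~$m$ and observes that their weights sum to~$0$, so one must be nonnegative, whereas you pick the starting vertex minimizing the prefix sum; the two arguments are equivalent.
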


\begin{proof}
Let~$\cycle$ be a critical cycle of~$A$. Since
$l(\cycle^m)=l(\cycle)m$, there are walks~$W_1,\cdots,W_{l(\cycle)}$ of
length~$m$ such that $W_1\cdots W_{l(\cycle)}=\cycle^{m}$. Since
$\sum_l p(W_l)=p(\cycle)t=0$, there is a $W_k$
with nonnegative $p(W_k)$.
\end{proof}

\begin{proof}[Proof of Lemma~\ref{l:BndsCSR}]
We first show inequality~\eqref{e:CsrMin}. By the optimal walk
interpretation~\eqref{e:representation} we have
$(CS^tR)_{ij}=\max\{p(W)\colon W\in\walkslennode{i}{j}{t,\gamma}{\subcrit}\}$ for any walk $W$. If
$(CS^tR)_{ij}$ is finite then the walk set
$\walkslennode{i}{j}{t,\gamma}{\subcrit}$ is non-empty and
contains a walk with the length bounded by
$T_{cr}^{\gamma}(\subcrit)$, hence~\eqref{e:CsrMin}.


To prove inequality~\eqref{e:CsrMinFiniteA}, let us assume that
$A$~has only finite entries, and that $t\geq 2+n$ (using that the
sequence $\{CS^tR\}_{t\geq 1}$ is periodic). 

Apply Lemma~\ref{l:nonnegativep} with~$m=t-2$
and set~$W=(i,r)\cdot W_0\cdot
(s,j)$, where $r$, resp.\ $s$, are the beginning node, resp.\ the
end node of $W_0$. By the optimal walk
interpretation~\eqref{e:representation}, we get $(CS^tR)_{ij}\ge
p(W)\ge a_{ir}+a_{sj}\ge 2\min_{kl}a_{kl}$.

The inequalities~\eqref{e:CsrMinFiniteASyK} and~\eqref{e:CsrvMinFiniteA} are proved similarly.
For~\eqref{e:CsrMinFiniteASyK}, select a walk~$V$ with minimal length among those with weight~$\tilde{b}^*_{ij}$ on~$\digr(\tilde{B})$ and a walk~$W_0$
with nonnegative $p(W_0)$ and length~$t-l(V)-2$.
Set~$W=(i,r)\cdot W_0\cdot (s,i)\cdot V$ and get
$$(CS^tR)_{ij}\ge p(W)\ge a_{ir}+a_{si} +p(V)\ge 2\min_{k\ell}a_{k\ell} +\tilde{b}^*_{k\ell} +\lambda(B)\cabdrive_B.$$

For~\eqref{e:CsrvMinFiniteA}, select a  walk $W_0$ with nonnegative
$p(W_0)$ and length~$t-1$ and set $W=(i,r)\cdot W_k$ (where $r$ is the
beginning node of $W_0$).
\end{proof}

\old{
To prove Equation~\eqref{e:CsrvMin}, let us define a walk~$V$ as follows.
Take a walk $V_1$ from~$i$ to~$\subcrit$ with minimal length. Denote by~$s$ the remainder of the division of~$t-l(V_1)$ by~$\gamma$.
Starting from the end node of~$V_1$, follow $s$ critical edges and call this walk~$V_2$.
Set $V=V_1V_2$.
By definition $l(V_1)\le \cabdrive\left(\digr(A)\setminus\subcrit\right)+1$ and~$l(V_2)\le\gamma(\subcrit)-1$, which implies~\eqref{e:CsrvMin}.
}

\begin{proof}[Proof of Proposition~\ref{p:TcrToT2} and Theorem~\ref{t:T2}]
Assume that $\lambda(A)=0$ and $t$ is greater than one of the bounds.
We want to prove that equation
\begin{equation}\label{e:T2}
t\lambda(A)\otimes(CS^tR)_{ij}\ge t\lambda(B)\otimes \tilde{b}^{(t)}_{ij}
\end{equation}
holds for all $i,j$.

By Lemma~\ref{l:FiniteEntriesEnough}, if $(CS^tR)_{ij}=\bzero$ then
$\tilde{b}^t_{ij}=\bzero$ and there is nothing to prove. So we can
assume that $(CS^tR)_{ij}$ is finite, in which case we can use the
inequalities of Lemmas~\ref{l:BndsCSR} and~\ref{l:BndsBt}, which
show that~\eqref{e:T2} follows when we have
\begin{equation}
\label{e:tlAtlB}
\begin{split}
t\lambda(A)+T^{\gamma}_{cr}(\subcrit)\left(\min_{kl}a_{kl}-\lambda(A)\right)&\ge
t\lambda(B)+\cabdrive(\digr(B))(\max_{kl} b_{kl}-\lambda(B)),\\
t\lambda(A)+ 2(\min_{kl}a_{kl}-\lambda(A))&\ge
t\lambda(B)+(\lambda(B)-\min_{kl} b_{kl}),
\end{split}
\end{equation}
in the general case (the first inequality) and in the case of finite
entries (the second inequality). If $t$ is greater than one of the
required bounds, then one of the inequalities~\eqref{e:tlAtlB}
holds, and~\eqref{e:T2} follows.

To obtain Theorem~\ref{t:T2} it remains to deduce the shorter
parts of~\eqref{e:T2a}-\eqref{e:T2c} from the longer ones.
Observe that all the longer parts of the bounds are of the form
\begin{equation}
\label{e:genfrac}
\frac{n_1(\lambda(A)-a_{ij})+\cabdrive_B(a_{kl}-\lambda(B))}{\lambda(A)-\lambda(B)}
\end{equation}
for some $i,j,k,l$, where $n_1$ is greater than $\cabdrive_B$.
Using $n_1>\cabdrive_B$, expression~\eqref{e:genfrac} can be bounded by
\begin{equation*}
\begin{split}
&\frac
{(n_1-\cabdrive_B)(\lambda(A)-a_{ij})+\cabdrive_B(a_{kl}-a_{ij}+\lambda(A)-\lambda(B))}{\lambda(A)-\lambda(B)}\\
&\leq
\frac{(n_1-\cabdrive_B)\|A\|+\cabdrive_B(\|A\|+\lambda(A)-\lambda(B))}{\lambda(A)-\lambda(B)}=
n_1\frac{\|A\|}{(\lambda(A)-\lambda(B)}+\cabdrive(\digr(B)).
\end{split}
\end{equation*}
This completes the proof of all the bounds of
Theorem~\ref{t:T2}.
\end{proof}

It remains to prove~Theorem~\ref{t:T2v}. We do it by generalizing the proof
of \cite[Proposition 5]{CBFN-12}.

\begin{proof}[Proof of Theorem~\ref{t:T2v}]
The case~$\lambda(A)=\bzero$ is trivial.
In the rest of the prove, we assume $\lambda(A)=\bunity$ by replacing~$A$
with~$\lambda(A)^-\otimes A$.

We denote by~$\Delta$ and~$\delta$ the greatest and smallest edge
weight in~$\digr(A)$, respectively. We have $\lVert A\rVert = \Delta
- \delta$. If $\Delta=\delta$, then~$\crit(A)=\digr(A)$ and hence $B^t\le A^t\le
CS^tR$ by the optimal walks interpretations~\eqref{e:walksense1}
and~\eqref{e:representation}.

We hence assume $\Delta\neq\delta$ in the rest of the proof.
The assumption~$\lambda(A)=0$ implies $\delta\leq\lambda(B)\leq 0\leq
\Delta$.

Denote by~$v_{\max}$ and~$v_{\min}$ the greatest and smallest entry of~$v$,
respectively.
It is $\lVert v\rVert = v_{\max} - v_{\min}$.

Let $t\geq {(\lVert v\rVert+ (n-1)\lVert
A\rVert)}/{(-\lambda(B))}$.
We show $CS^tRv\geq
B^tv$.

Let~$i$ be a node of~$\digr(A)$. Let $V$ be a walk in $\digr(B)$ of
length~$t$ starting at~$i$, and let $\Tilde{V}$ be the remaining
walk after repeated cycle deletion. Let $W_2$ be a shortest path
connecting some node~$k'$ of $\Tilde{V}$ to a critical node~$k$ and
let~$W_1$ be the prefix of~$\Tilde{V}$ ending at~$k'$ and let
$\Tilde{V} = W_1\cdot W_1'$. See Figure~\ref{fig:proof:trans:2:w}
for an illustration of these walks. We obtain
\begin{equation}
\label{eq:proof:trans:2:upper}
\begin{split}
p(V) & \leq p(\Tilde{V}) + \lambda(B) \cdot \big( t - l(\Tilde{V}) \big)
\leq
p(W_1\cdot W_1') - \delta \cdot l(W_1\cdot W_1') + \lambda(B)\cdot t
\\ & \leq
p(W_1) + \lVert A\rVert \cdot l(W_1') - \delta \cdot l(W_1) -\|v\| -
\lVert A \rVert \cdot (n-1),
\end{split}
\end{equation}
using that $\lambda(B)t\leq -(||v||+||A||(n-1))$.

Let~$\cycle$ be a critical cycle starting at~$k$ and set $r =
\left\lfloor \big( t - l(W_1\cdot W_2)  \big) / l(\cycle)
\right\rfloor$. Then let~$W_3$ be the prefix of~$\cycle$ of length
$t - l(W_1\cdot W_2\cdot \cycle^r)$, which is between~$0$ and
$l(\cycle) - 1$. Setting $W = W_1\cdot W_2\cdot \cycle^r \cdot W_3$,
we have
\begin{equation}\label{eq:proof:trans:2:lower}
p(W) \geq p(W_1\cdot W_2\cdot W_3)
\geq
p(W_1) + \delta\cdot l(W_2\cdot W_3)
\end{equation}
and hence, because $l(W_1) + l(W_2) + l(W_3) + l(W_1') \le n-1$,
\begin{align}
p(V)  & \le
p(W_1) - \lVert A\rVert \cdot \big( l(W_1) + l(W_2) + l(W_3) \big) - \delta \cdot l(W_1)-\|v\|
\\ & \leq
p(W_1) + \delta \cdot l(W_2 \cdot W_3)  -\|v\|
\leq p(W)-\|v\|
\enspace.
\end{align}
Since $p(W)+v_{\min}\leq (CS^tRv)_i$ by the walk
interpretation~\eqref{e:representation} of CSR terms, we have
$(CS^tRv)_{i}\ge (B^tv)_{i}$, which concludes the proof.

The claim for the case that all entries of~$A$ are finite follows from
Lemmas~\ref{l:BndsBt} and~\ref{l:BndsCSR}.
\end{proof}

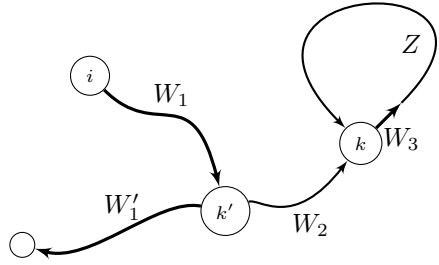
\begin{figure}
\centering
\begin{tikzpicture}[>=latex',scale=0.90]
    \node[shape=circle,draw] (i) at (-2,2) {$\scriptstyle i$};
    \node[shape=circle,draw] (k) at (0,0) {$\scriptstyle k'$};
    \node[shape=circle,draw] (j) at (2,1) {$\scriptstyle k$};
    \node[shape=circle,draw] (end) at (-3,-.5) {};
    \draw[very thick,->] (i) .. controls (-1,1) and (-.5,2)  ..
node[midway,above]{${W_1}$}  (k);
    \draw[thick,->] (k) .. controls (0.5,0.2) and (1,-0.3)  .. node[near end,below=1mm]{${W}_2$}  (j);
    \draw[very thick,->] (j) --  (2.6,1.6) node[below=2mm] {$W_3$};
    \draw[thick,->] (2.6,1.6) .. controls +(2.1,2.1) and +(-2.5,2.5)  ..
node[near start,below left]{$\cycle$}  (j);
    \draw[very thick,->] (k) .. controls +(-1,0.2) and +(1,-0.3) ..
node[above]{$W_1'$} (end);
\end{tikzpicture}
\caption{Walks~$\Tilde{V}=W_1W'_1$ and~$W=W_1W_2\cycle^rW_3$ in proof of Theorem~\ref{t:T2v}.}
\label{fig:proof:trans:2:w}
\end{figure}

\section{Cycle Insertion}\label{s:Ep}
In this section, we state some bounds on~$\ep^\gamma$.

The exploration penalty has been introduced in~\cite{CBFN-12}, where the following is proven.
\begin{proposition}[Theorem~3 of \cite{CBFN-12}]
Let~$\subcrit$ be a strongly connected graph with cyclicity~$\gamma$ and girth~$\g$.
Its exploration penalty~$\ep^\gamma$ satisfies:
$$\ep^\gamma\le 2 \frac{\g}{\gamma} |\subcrit|-\frac{\g}{\gamma}-2\g+\gamma$$
\end{proposition}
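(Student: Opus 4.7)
The statement says $\ep^\gamma \le 2(\g/\gamma)|\subcrit| - (\g/\gamma) - 2\g + \gamma$. Unpacking the definition of exploration penalty, the plan is to show that for every node $i \in \subcrit$ and every multiple $t$ of $\gamma$ with $t$ at least the claimed bound, there exists a closed walk in $\subcrit$ of length exactly $t$ starting at $i$. I will fix $i$ and construct such walks by combining a shortest cycle with paths connecting $i$ to the cycle and a finite family of ``residue-shifting'' closed walks at the cycle.

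Concretely, let $C$ be a shortest cycle of $\subcrit$, of length $\g$, passing through a node $u$, and let $P_1$ and $P_2$ be shortest paths from $i$ to $u$ and from $u$ to $i$ respectively; these exist by strong connectivity, and by choosing $u$ so that $P_1 \cup C \cup P_2$ spans as few vertices as possible one obtains $l(P_1)+l(P_2) \le 2(|\subcrit|-\g)$. Setting $m := \g/\gamma$, I will produce a family of closed walks $Q_0, Q_1, \ldots, Q_{m-1}$ at $u$ such that $l(Q_j) \equiv j\gamma \pmod{\g}$, so that for every target multiple $t$ of $\gamma$ and the unique $j$ making $t - l(P_1) - l(Q_j) - l(P_2)$ a nonnegative multiple of $\g$, the walk $P_1 \cdot Q_j \cdot C^k \cdot P_2$ with $k = (t - l(P_1) - l(Q_j) - l(P_2))/\g$ is a closed walk at $i$ of length exactly $t$.

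The existence of $Q_j$ in each of the $m$ residue classes modulo $\g$ is a direct consequence of the cyclicity of $\subcrit$ being exactly $\gamma$: the semigroup of closed walk lengths at $u$ has gcd $\gamma$, and hence reduces modulo $\g$ to $\{0, \gamma, \ldots, (m-1)\gamma\}$. I would construct the $Q_j$'s inductively, setting $Q_0$ to the empty walk and obtaining $Q_{j+1}$ from $Q_j$ by concatenating it with a single ``residue-shifter''---a closed walk at $u$ whose length is $\equiv \gamma \pmod{\g}$, built from a detour of length at most $2(|\subcrit|-\g)$ to some vertex $v$ together with a suitable cycle through $v$ (whose length is at most $|\subcrit|$ and not a multiple of $\g$, using that $\gamma < \g$ when $m>1$). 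A careful accounting of the incremental cost per step, combined with the bound $l(P_1)+l(P_2) \le 2(|\subcrit|-\g)$ and the slack $\g$ required to guarantee $k \ge 0$, gives the claimed bound after simplification.

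The main obstacle is the sharp control of the lengths of the residue-shifters: proving that one can always choose each increment to have length at most $2|\subcrit| - 1$ (so that $\max_j l(Q_j) \le (m-1)(2|\subcrit|-1) - (m-1)\g + (m-1)\gamma$ and the constants align) requires a Frobenius- or Bezout-type argument on cycle lengths in $\subcrit$ and an attention to vertex overlaps between the shifters and the base paths $P_1, P_2$. This is delicate because in general no two cycles need to have lengths with gcd $\gamma$ (e.g.\ lengths $\{6,10,15\}$), so the shifter may have to be obtained as a suitable combination rather than from a single cycle, and the bookkeeping must ensure uniformity across all $m-1$ iterations.
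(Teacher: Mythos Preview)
The paper does not prove this proposition; it simply quotes it as Theorem~3 of~\cite{CBFN-12}. So there is no argument in the paper to compare your proposal against.

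On its own merits, your outline is the natural one: fix a shortest cycle~$C$ of length~$\g$, reach it from~$i$ and come back, and manufacture the missing residue modulo~$\g$ with a closed walk at a node of~$C$. The acknowledged obstacle, however, is a genuine gap and not just bookkeeping. Your construction of a single ``residue-shifter'' of length~$\equiv\gamma\pmod{\g}$ by attaching one cycle of length not divisible by~$\g$ does not work in general: such a cycle only gives \emph{some} nonzero residue in~$\gamma\mathbb{Z}/\g\mathbb{Z}$, not necessarily~$\gamma$ itself, and (as your example $\{6,10,15\}$ shows) no single cycle may do the job. What one actually needs is a bound of the right order on the length of the shortest closed walk at~$u$ in each residue class, and that is precisely a Frobenius-type statement for the numerical semigroup generated by the closed-walk lengths at~$u$; you have not supplied it, and the sharp constant in the proposition depends on getting it right. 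A second, smaller issue: your claim $l(P_1)+l(P_2)\le 2(|\subcrit|-\g)$ requires that \emph{both} the shortest forward path from~$i$ to~$C$ and the shortest backward path from~$C$ to~$i$ land at the \emph{same} node~$u$; in general they land at different nodes of~$C$, so you either pay an extra arc along~$C$ or must argue more carefully.

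If you want to complete the proof along these lines, the cleanest route is to bound directly, for a fixed node~$u$ on the shortest cycle, the largest multiple of~$\gamma$ that is \emph{not} the length of a closed walk at~$u$. Using that the cycle lengths of~$\subcrit$ are all multiples of~$\gamma$, divide everything by~$\gamma$ and apply a Frobenius/Sylvester bound to the resulting set of integers, together with the fact that every node lies on a closed walk of length at most~$2|\subcrit|-\g$ through~$u$ (path to~$C$, once around, path back); this is essentially how~\cite{CBFN-12} proceeds.
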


Since $\ep^\gamma(\subcrit)$ is bounded by $\ind(\subcrit)$, it is also possible to use the following bounds.
\begin{proposition}
\label{p:booleanbounds}
Let~$\subcrit$ be a strongly connected
graph. Its index~$\ind(\subcrit)$ is related to its girth~$\g$ and
its cyclicity~$\gamma$ by the following inequalities:
\begin{itemize}
\item[{\rm (i)}]
\label{Wielandt} $\ind(\subcrit)\le \wiel(|\subcrit|)$, where
$\wiel(1)=0$ and $\wiel(r)=(r-1)^2+1$ otherwise.
\item[{\rm (ii)}]
\label{Schwarz} $\ind(\subcrit)\le \gamma \wiel(r)+s$, where $r$
is the quotient of the division of $|\subcrit|$  by $\gamma$ and
$s$ its remainder.
\item[{\rm (iii)}]
\label{DulmageMendelsohn} $\ind(\subcrit)\le
|\subcrit|+(|\subcrit|-2)\g$
\end{itemize}
\end{proposition}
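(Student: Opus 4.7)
The plan is to treat all three bounds via a common template: every walk from $i$ to $j$ in the strongly connected graph $\subcrit$ decomposes as a bounded-length path plus a nonnegative combination of cycles, so the existence of walks of all sufficiently large lengths (with a prescribed residue modulo $\gamma$) reduces to a Frobenius/Sylvester-type question about the numerical semigroup generated by the cycle lengths.

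I would start with (iii), which is the most direct. For any $i, j$, strong connectedness produces a path from $i$ meeting a fixed girth cycle $\cycle$ of length $\g$, and a path from $\cycle$ to $j$, using together at most $n - \g$ edges outside $\cycle$ plus traversals within $\cycle$ itself. Inserting copies of $\cycle$ extends the walk length by $\g$, and using a second cycle of length coprime to $\g$ (available when $\subcrit$ is primitive) one covers every residue class modulo $\g$. The Frobenius bound $(\g-1)(\g'-1)$ combined with the path estimate yields $\ind(\subcrit) \le n + (n-2)\g$; the case $\gamma > 1$ is handled by passing to cyclic classes and rescaling.

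For (i) and (ii), I would pass to the cyclic decomposition $\subcrit = V_0 \cup \cdots \cup V_{\gamma - 1}$, on which $\subcrit^\gamma$ acts as a primitive strongly connected digraph inside each class. In a balanced partition the class sizes differ by at most one, so they are at most $r+1$ with $n = \gamma r + s$. Applying the primitive bound (i) on each class and translating back to $\subcrit$ --- multiplying by $\gamma$ and tracking the $s$ oversized classes --- yields (ii) in the form $\gamma \wiel(r) + s$. Bound (i) itself comes from the same Frobenius count: in the primitive case one can extract two cycles of coprime lengths $\g, \g'$ with $\g + \g' \le n + 1$ (a girth cycle together with a chord creating a second cycle), so every integer at least $(\g-1)(\g'-1) \le (n-1)^2$ is representable as a nonnegative combination, giving the $(n-1)^2 + 1$ bound.

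The main obstacle is obtaining the sharp constant in (i). A Frobenius-style argument supplies a bound of order $n^2$ essentially for free, but pinning down $(n-1)^2 + 1$ exactly requires showing that Wielandt's canonical example (an $n$-cycle together with a single chord) is extremal; this involves a delicate case analysis on the relative position of the two shortest cycles, typically carried out as in Brualdi--Ryser. Once (i) is established, (ii) follows by the clean reduction through cyclic classes sketched above, and (iii) stands essentially on its own as the most elementary of the three.
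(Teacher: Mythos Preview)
The paper does not actually prove this proposition: it is stated as a compilation of classical results, with bound~(i) attributed to Wielandt, bound~(ii) to Schwarz (with the argument via cyclic classes in Shao--Li), and bound~(iii) to Dulmage--Mendelsohn. The only in-house observation is that (i) and~(iii) are recovered as the Boolean special case of the paper's own Theorem~\ref{t:T1ha} (equations~\eqref{e:T1haWielandt} and~\eqref{e:T1haDM}), via Remarks~\ref{r:Boolean} and~\ref{r:Boolean2}: for an irreducible Boolean matrix, $B=(\bzero)$ in every scheme, so $T_1(A,B)=\ind(A)$, and the general $T_1$ bounds specialize to Wielandt's and Dulmage--Mendelsohn's. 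That route is quite different from yours: rather than a direct Frobenius count on cycle lengths, it passes through the machinery of cycle removal thresholds (Propositions~\ref{p:TcRLin} and~\ref{p:TcrHAWielandt}) developed for the weak CSR expansion.

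Your sketch is the standard direct approach and is broadly correct in outline, but two steps are looser than you suggest. For~(iii), primitivity gives $\gcd$ of \emph{all} cycle lengths equal to~$1$, not a single cycle coprime to~$\g$; you need the Sylvester--Frobenius argument over several generators, or the Dulmage--Mendelsohn trick of using a girth cycle plus one extra edge. For~(i), you correctly flag the real difficulty: the Frobenius bound $(\g-1)(\g'-1)$ with $\g+\g'\le n+1$ gives only a bound of order~$n^2/4$ in general, not $(n-1)^2+1$; the sharp constant requires either Wielandt's original argument or the more combinatorial analysis of Shao or Brualdi--Ryser, which your proposal defers to without giving. So your proposal is a fair summary of how the classical proofs go, but it does not close the gap for the sharp Wielandt constant --- and neither does the paper, which simply cites it.
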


Bound (i) can be traced back to a work of Wielandt~\cite{Wie-50}.
Bound (ii) is due to Schwarz~\cite{Sch-70}, but a more comprehensive
explanation was given by Shao and Li~\cite{LS-93}. Bound (iii) was
originally proved by Dulmage and Mendelsohn~\cite{DM-64} for
primitive matrices but the case of a non-primitive matrix also
follows (for instance) from Theorem~\ref{t:T1ha} by
Remark~\ref{r:Boolean2}. Other bounds on $\ind(\digr)$ can be also
found in the literature.

As noticed by Kim~\cite{Kim-80}, the same method as in the proof
of~\eqref{Schwarz} by Shao and Li~\cite{LS-93} applied
to~\eqref{DulmageMendelsohn} instead of~\eqref{Wielandt} gives:
$$\ind(\subcrit)\le \gamma r+(r-2)\g + s\le |\subcrit|(1+\frac{\g}{\gamma})-2\g.$$

Bouillard and Gaujal~\cite{BG-00} implicitly derive
from~\eqref{DulmageMendelsohn} the following:
\begin{proposition}
If $\crit(A)$ has $n_c$ nodes, $h$~s.c.c.'s and maximal girth~$\hat{\g}$, then any s.c.c.~$\crit$ of~$\crit(A)$ satisfies
$\ep^{\gamma(\crit)}(\crit)\le n_c+(n_c-2h)\hat{\g}$.
\end{proposition}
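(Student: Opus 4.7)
The plan is to derive this bound from the Dulmage--Mendelsohn inequality (Proposition~\ref{p:booleanbounds}(iii)) applied to each strongly connected component of $\crit(A)$ separately, combined with the basic fact that the exploration penalty is bounded by the Boolean index.

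First, I would reduce the problem to bounding $\ind(\crit)$. By definition, $\ep^{\gamma(\crit)}(i)$ for $i \in \crit$ concerns closed walks from $i$ inside $\crit(A)$, and because $\crit(A)$ is completely reducible (no edges between distinct s.c.c.'s), every such closed walk stays inside $\crit$. Comparing the set of achievable lengths of closed walks from $i$ with those counted by $\ind(\crit)$, this yields $\ep^{\gamma(\crit)}(\crit) \leq \ind(\crit)$.

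Second, I would apply bound (iii) to every s.c.c.\ $\crit_l$ of $\crit(A)$: writing $n_l = |\crit_l|$ and denoting by $g_l$ the girth of $\crit_l$, Dulmage--Mendelsohn gives $\ind(\crit_l) \leq n_l + (n_l - 2)\,g_l$. Upper-bounding each $g_l \leq \hat{\g}$ and summing over the $h$ components would then produce
$$\sum_{l=1}^{h} \ind(\crit_l) \leq n_c + \hat{\g}\,(n_c - 2h),$$
using $\sum_l n_l = n_c$. Since the index of each s.c.c.\ is nonnegative, the index of our chosen component $\crit$ is bounded by this sum, which yields the claimed inequality after combining with the first step.

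The main delicate point will be the termwise substitution $g_l \to \hat{\g}$ in the second step, which is only monotone when $n_l \geq 2$. For single-node s.c.c.'s (loops), where $g_l = 1$ and the naive bound $n_l + (n_l - 2)\hat{\g} = 1 - \hat{\g}$ can become negative, one can use $\ind(\crit_l) = 0$ directly, and the summation argument remains valid because such components contribute nothing to $\sum_l \ind(\crit_l)$ while still being counted in $n_c$ and $h$ on the right-hand side.
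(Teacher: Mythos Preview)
Your argument is the paper's proof verbatim: the paper writes the single chain
\[
\ep^{\gamma(\crit_l)}(i)\le\ind(\crit_l)\le\sum_{l}\ind(\crit_l)\le\sum_{l}\bigl(n_l+(n_l-2)\hat{\g}\bigr)=n_c+(n_c-2h)\hat{\g},
\]
applying Dulmage--Mendelsohn to each component with~$g_l$ replaced by~$\hat{\g}$, exactly as you propose.

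You are right to flag that the substitution $g_l\to\hat{\g}$ is not monotone when $n_l=1$; the paper does not address this. However, your fix is mistaken. A single-node component contributes~$0$ to $\sum_l\ind(\crit_l)$, but its contribution to the right-hand side $n_c+(n_c-2h)\hat{\g}$ is $1-\hat{\g}\le 0$ (it adds~$1$ to~$n_c$ and~$1$ to~$h$, and~$h$ carries coefficient $-2\hat{\g}$). So loops make the claimed inequality \emph{tighter}, not looser, and the summation can actually fail: take $\crit(A)$ to be a loop together with the primitive Wielandt digraph on three nodes (girth~$2$, index~$5$). Then $n_c=4$, $h=2$, $\hat{\g}=2$, the bound reads $4+0\cdot2=4$, yet $\ep^{1}$ of the Wielandt component equals~$5$. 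This is a genuine gap, but it is the same gap present in the paper's own argument; the proof (and the stated bound) is clean once every critical component has at least two nodes.
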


Indeed, if the s.c.c.~$\crit_l$ of~$\crit(A)$ has $n_l$ nodes, for
any node $i$ in~$\crit_l$, we have
$$\ep^{\gamma(\crit_l)}(i)
\le\ind(\crit_l)
\le\sum_{l=1}^h\ind(\crit_l)
\le\sum_{l=1}^h \left(n_i+(n_i-2)\hat{\g}\right)
\le n_c+(n_c-2h)\hat{\g}.$$
Together with this bound, Equation~\eqref{e:T1haDMEp} generalizes and improves the bound of~\cite{BG-00}.

\section{Local Reductions}\label{s:localred}


Every weak CSR expansion gives rise to {\em local weak CSR expansions} that can take the following forms:

\begin{equation}
\label{e:weaklocal}
\begin{split}
&a^{(t)}_{ij}=(CS^tR)_{ij}\oplus b^{(t)}_{ij},\quad\text{for $t\geq\tilde{\tau}(i,j)$},\\
&a^{(t)}_{ij}=(CS^lR)_{ij}\oplus b^{(t)}_{ij},\quad\text{for $t\equiv l(\text{mod }\gamma)$ and $t\geq\tilde{\tau}(i,j,l)$},\\
& (A^tv)_i = (CS^lRv)_i\oplus
(B^tv)_i,\quad\text{for $t\geq\tilde{\tau}(i,v)$},\\
& (A^tv)_i = (CS^lRv)_i\oplus
(B^tv)_i,\quad\text{for $t\equiv l(\text{mod}\ \gamma)$ and $t\geq\tilde{\tau}(i,l,v)$},
\end{split}
\end{equation}

In connection with these schemes, define the following subsets:
\begin{equation}
\label{e:Js}
\begin{split}
& J(i,j):=\{s\colon a^*_{is}a^*_{sj}<\min_l (CS^lR)_{ij}\},\\
& J(i,j,l):=\{s\colon a^*_{is}a^*_{sj}< (CS^lR)_{ij}\},\\
& J(i,v):=\{s\colon \bigoplus_j a^*_{is}a^*_{sj}v_j<\min_l
(CS^lRv)_i\}\\
& J(i,l,v):=\{s\colon \bigoplus_j a^*_{is}a^*_{sj}v_j<(CS^lRv)_i\}\\
\end{split}
\end{equation}
\begin{remark}
Unless $i=s=j$, $a_{is}^*a_{sj}^*$ is the biggest weight of a walk connecting $i$ to $j$ via $s$.
It follows from Theorem~\ref{t:representation} and this optimal walk interpretation
that $i,j\notin J(i,j),\ J(i,j,l)$ and $i\notin J(i,v),\
J(i,l,v)$. Moreover, if some critical~$s$ belongs to one of the sets
defined here, then all its s.c.c.\ in~$\crit(A)$ does, since for each pair of nodes in the same s.c.c. of $\crit(A)$
we can find a closed walk in $\crit(A)$ containing both of them.
\end{remark}

Note that $i,j\notin J(i,j),\ J(i,j,l)$ and
$i\notin J(i,v),\ J(i,l,v),$ by the optimal walk interpretation~\eqref{e:walksense2} of
$A^*$ and CSR terms~\eqref{e:representation}.

Now let $\Tilde{\crit}(A)$ be the remainder of the critical graph,
without the s.c.c.\ with indices in $J$, for $J=J(i,j)$, $J(i,j,l)$,
$J(i,v)$ or $J(i,l,v)$.

Redefine $\Tilde{C},\Tilde{S}$ and $\Tilde{R}$ using
$\Tilde{\crit}(A)$ instead of $\crit(A)$, and the subordinate
matrix' $\Tilde{A}$ of $A$ where all rows and columns with indices
in $J$ are canceled, instead of $A$. Redefine $\Tilde{B}$ as a
subordinate of $\Tilde{A}$ whose indices are in $\digr(B)$ (but not
in $J$). This procedure will be referred to as {\em local reduction}
of a weak CSR expansion. When $J=J(i,j)$, or resp.\ $J=J(i,j,l)$,
$J=J(i,v)$ or $J=J(i,v,l)$, this will be called $i,j$-reduction, or
resp.\ $i,j,l$-reduction, $i,v$-reduction or $i,l,v$-reduction.


\begin{theorem}
\label{t:weakreduced}
Let $A\in\Rpnn$, $B$ subordinate to $A$ and the integer numbers
$\tilde{\tau}(i,j)$, $\tilde{\tau}(i,j,l)$, $\tilde{\tau}(i,v)$  and
$\tilde{\tau}(i,l,v)$, for $i,j\in\{1,\ldots,n\}$ and $v\in\Rp^n$,
satisfy~\eqref{e:weaklocal}.
Corresponding to the definitions of $J$ given in~\eqref{e:Js}, we
have
\begin{equation}
\label{e:weakreduced}
\begin{split}
&a^{(t)}_{ij}=(\Tilde{C}\Tilde{S}^t\Tilde{R})_{ij}\oplus \tilde{b}^{(t)}_{ij},\quad\text{for $t\geq\tilde{\tau}(i,j)$},\\
&a^{(t)}_{ij}=(\Tilde{C}\Tilde{S}^l\Tilde{R})_{ij}\oplus \tilde{b}^{(t)}_{ij},\quad\text{for $t\equiv l(\text{mod }\gamma)$ and $t\geq\tilde{\tau}(i,j,l)$},\\
& (A^tv)_i = (\Tilde{C}\Tilde{S}^l\Tilde{R}v)_i\oplus
(\Tilde{B}^tv)_i,\quad\text{for $t\geq\tilde{\tau}(i,v)$},\\
& (A^tv)_i = (\Tilde{C}\Tilde{S}^l\Tilde{R}v)_i\oplus
(\Tilde{B}^tv)_i,\quad\text{for $t\equiv l(\text{mod }\gamma)$ and
$t\geq\tilde{\tau}(i,l,v)$},
\end{split}
\end{equation}
with $\Tilde{C},\Tilde{S},\Tilde{R}$ and $\Tilde{B}$ defined in the local
reduction procedure.
\end{theorem}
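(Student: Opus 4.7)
The plan is to deduce each identity in \eqref{e:weakreduced} from the corresponding hypothesis in \eqref{e:weaklocal} in two steps: first show that the CSR part is unchanged by the reduction, i.e., $(\tilde{C}\tilde{S}^t\tilde{R})_{ij}=(CS^tR)_{ij}$ (and analogously for the three other expansions), and then show that the $B$ part loses nothing when the rows and columns indexed by $J$ are cancelled. Both steps rest on the elementary observation that any walk in $\digr(A)$ from $i$ to $j$ passing through some $s\in J$ has weight bounded above by $a^*_{is}\otimes a^*_{sj}$, which by the definition of $J$ is strictly less than the relevant CSR entry; similarly, a walk from $i$ of length $t$ ending at some $j$ and passing through $s\in J(i,v)$ contributes at most $\bigoplus_j a^*_{is}\otimes a^*_{sj}\otimes v_j$ to $(A^tv)_i$, which is again strictly less than the corresponding CSR value.

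First I would establish the identity $(\tilde{C}\tilde{S}^t\tilde{R})_{ij}=(CS^tR)_{ij}$. By the remark preceding the theorem, $J$ intersects the critical node set in a union of whole s.c.c.'s of $\crit(A)$, so $\tilde{\crit}(A)$ is completely reducible and, since $\lambda(\tilde{A})=\lambda(A)$ with any critical cycle of $\tilde{A}$ being a critical cycle of $A$ that avoids $J$, it coincides with $\crit(\tilde{A})$. Theorem~\ref{t:representation} then expresses both sides as $p\bigl(\walkslennode{i}{j}{t,\gamma}{\mN}\bigr)$ for an appropriate set $\mN$ of representative critical nodes, with walks taken in $\digr(\tilde{A})$ on the left and in $\digr(A)$ on the right. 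Choosing the representatives compatibly (the right-hand choice being the left-hand choice augmented by one node from each removed s.c.c.), any walk contributing to the right-hand side but not the left passes through some $s\in J$ and hence has weight at most $a^*_{is}\otimes a^*_{sj}<(CS^tR)_{ij}$, so it cannot alter the maximum. The $i,j,l$-, $i,v$-, and $i,l,v$-reduction variants follow the same pattern, using $CS^tR=CS^lR$ whenever $t\equiv l\pmod{\gamma}$ (by pure periodicity, Proposition~\ref{p:purely}) and the $v$-weighted maximum over endpoints for the vector case.

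Armed with this equality, I would substitute into \eqref{e:weaklocal} and decompose $b^{(t)}_{ij}$ according to whether walks in $\digr(B)$ visit $J$: those that do satisfy the same bound $a^*_{is}\otimes a^*_{sj}\leq(\tilde{C}\tilde{S}^t\tilde{R})_{ij}$ (the bound still applies since $B\leq A$), and those that do not contribute exactly $\tilde{b}^{(t)}_{ij}$. This yields $a^{(t)}_{ij}\leq(\tilde{C}\tilde{S}^t\tilde{R})_{ij}\oplus\tilde{b}^{(t)}_{ij}$; the reverse inequality is immediate from $\tilde{C}\tilde{S}^t\tilde{R}=CS^tR\leq A^t$ and $\tilde{B}\leq B\leq A$. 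The main technical point, which is really more bookkeeping than substance, is the first step: one must check that excising the s.c.c.'s indexed by $J$ from $\crit(A)$ preserves both the complete reducibility of $\tilde{\crit}(A)$ and the value of the CSR entries of interest, and this is exactly what the remark preceding the theorem is designed to guarantee.
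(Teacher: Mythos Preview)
Your argument is correct and rests on the same key observation as the paper's proof: any walk from $i$ to $j$ that visits a node $s\in J$ has weight at most $a^*_{is}\otimes a^*_{sj}$, which is strictly below the relevant CSR entry, so such walks cannot affect the maxima on either side. The organization differs slightly, however. The paper proceeds in two stages: it first passes to an intermediate subordinate matrix~$A'$ obtained by cancelling only the rows and columns in $J\cap N_B$, showing this leaves $(CS^lR)_{ij}$ unchanged and allows $b^{(t)}_{ij}$ to be replaced by~$\tilde{b}^{(t)}_{ij}$; it then invokes the s.c.c.-wise decomposition of Corollary~\ref{c:representation} (equation~\eqref{e:early-exp}) to discard the critical components lying in~$J$, reducing the CSR term from~$A'$ to that from~$\tilde{A}$. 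You bypass the intermediate~$A'$ entirely and compare $(\tilde{C}\tilde{S}^t\tilde{R})_{ij}$ with $(CS^tR)_{ij}$ directly via Theorem~\ref{t:representation}, after checking that $\tilde{\crit}(A)=\crit(\tilde{A})$ and choosing compatible node sets~$\mN$. Your route is a little more streamlined (it avoids appealing to Corollary~\ref{c:representation}), while the paper's two-stage reduction makes the separate roles of $J\cap N_B$ and $J\cap N_c$ more visible; both reach the same conclusion by the same underlying mechanism.
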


\begin{proof}
We prove the theorem only in the case of $i,j$-reduction, i.e., in
the first case of~\eqref{e:weakreduced} corresponding to the first
case of~\eqref{e:weaklocal} and~\eqref{e:Js}.  The rest is
analogous. Let $N_B$, resp. $N_c$ be the set of nodes of $\digr(B)$,
resp. $\crit(A)$.

\if{
Define the subordinate matrix' $A'$ of $A$ formed by setting
to~$\bzero$ all rows and columns with indices in $J(i,j)\cap
N_B$. We first show that the first equation of~\eqref{e:weaklocal}
for $a_{ij}^{(t)}$ holds also with CSR terms and $B$ defined from
$A'$ instead of $A$. First, notice that the weights of walks going
through $s\in J(i,j)\cap N_B$ are less than $\min_l (CS^lR)_{ij}$,
hence $(CS^lR)_{ij}$ (being the weight of a walk connecting $i$ to
$j$) cannot decrease for any $l$ when defined from the subordinate
matrix' $A'$, so it is exactly the same. Next, the weight of any
walk going through any $s\in J(i,j)\cap N_B$ is dominated by any CSR
term, that is, $p(\walkslennode{i}{j}{t}{s})<\min_l (CS^lR)_{ij}$,
and we can replace $b^{(t)}_{ij}$ by $\Tilde{b}^{(t)}_{ij}$ in the
first equation of~\eqref{e:weaklocal}.
}\fi

Define the subordinate matrix $A'$ of $A$ formed by setting
to~$\bzero$ all rows and columns with indices in $J(i,j)\cap
N_B$. We first show that the first equation of~\eqref{e:weaklocal}
for $a_{ij}^{(t)}$ holds also with CSR terms and $B$ defined from
$A'$ instead of $A$. First, recall that the weights of walks going
through $s\in J(i,j)\cap N_B$ are less than $\min_l (CS^lR)_{ij}$,
and $(CS^lR)_{ij}$ is the greatest weight of any walk with certain length constraint, connecting $i$ to
$j$ via a critical node. Defining $(CS^lR)_{ij}$ from $A'$ amounts to canceling all walks going through
$s\in J(i,j)\cap N_B$ and contributing to $(CS^lR)_{ij}$.
Since such walks have low weight, $(CS^lR)_{ij}$ does not decrease, for any $l$, when defined from the subordinate
matrix $A'$, so it is exactly the same. Next, 
observe that (since the weights of walks going
through $s\in J(i,j)\cap N_B$ are less than $\min_l (CS^lR)_{ij}$) 
we can replace $b^{(t)}_{ij}$ by $\Tilde{b}^{(t)}_{ij}$ in the
first equation of~\eqref{e:weaklocal}.

We next show that the CSR term defined from $A'$ can be reduced. Use
expansion~\eqref{e:early-exp} of the CSR terms defined from $A'$,
where the first terms in~\eqref{e:early-exp} are defined from the
components of $\crit(A)$ with indices in $J(i,j)$ (these components
can be taken in any order). The sum of these terms expresses
$p(\walkslennode{i}{j}{t}{J(i,j)\cap N_c})$ (with walk sets defined
in $\digr(A')$) for all large enough $t$. Since these walk weights
are strictly less than the entries of CSR, all those terms in
expansion~\eqref{e:early-exp} with indices in $J(i,j)$ can be
canceled. The remaining part of expansion (for the entry $i,j$) sums
up to the reduced CSR term defined from the subordinate matrix
$\Tilde{A}$ (as defined in the reduction procedure).
\end{proof}

Notice that the proofs of the Theorems~\ref{t:T2} and~\ref{t:T2v} in Section~\ref{s:TcrToT2}
work with walks in $\walkslen{i}{j}{t}$. Hence the corresponding
bounds can be combined with all reductions of
Theorem~\ref{t:weakreduced}. In particular, local reductions may
lead to smaller $B$ and $\lambda(B)$ when $i$ and $j$ or $i$ and $v$
are fixed. Moreover, they can also result in decrease of the initial bounds on
$\tilde{\tau}$ based on the cycle removal threshold, since some of the
critical components get removed.

We now also recall a bound of a type that can
be found in Akian, Gaubert and Walsh~\cite{AGW-05}, and Bouillard and Gaujal~\cite{BG-00},
formulated here for the case of $(i,j,l)$-reduction.

\begin{proposition}
\label{p:AGW} Suppose that $A\in\Rp^{n\times n}$ is irreducible,
with $\lambda(A)=1$, and take $i,j\in\{1,\ldots,n\}$ and $l>0$. Let
$\gamma$ be the cyclicity of $\crit(A)$, and let $\tilde{\tau}(i,j,l)$ be
an integer such that
\begin{equation}
\label{CSR+B_ij}
a_{ij}^{(t)}=(\Tilde{C}\Tilde{S}^l\Tilde{R})_{ij}\oplus \Tilde{b}^{(t)}_{ij},\quad
t\equiv l(\text{mod}\ \gamma),\ t\geq\tilde{\tau}(i,j,l)\enspace,
\end{equation}
where the terms $\Tilde{C},\Tilde{S},\Tilde{R}$ and matrix
$\Tilde{B}$ are obtained by the $i,j,l$-reduction of some weak CSR
expansion. Let
\begin{equation}
\label{Tijl}
T(i,j,l)=\min\left\{t\colon \lambda^{\otimes t}(\Tilde{B})\otimes(\lambda^-(\Tilde{B})\otimes\Tilde{B})_{ij}^*\leq
(\Tilde{C}\Tilde{S}^l\Tilde{R})_{ij}\right\}\enspace .
\end{equation}
Then the transient $\tau(i,j,l)$ for which
\begin{equation}
\label{aijt}
a_{ij}^{(t)}=(\Tilde{C}\Tilde{S}^l\Tilde{R})_{ij},\quad t\equiv
l(\text{mod}\;\gamma),\ t\geq\tau(i,j,l)\enspace,
\end{equation}
satisfies $\tau(i,j,l)\leq\max(\tilde{\tau}(i,j,l),T(i,j,l))$.
\end{proposition}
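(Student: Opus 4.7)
The plan is to combine the local weak CSR expansion from Theorem~\ref{t:weakreduced} with the general upper bound on $B^t$ from Lemma~\ref{l:BndsBt}, applied to~$\Tilde{B}$. By hypothesis~\eqref{CSR+B_ij}, for $t\equiv l\pmod\gamma$ and $t\geq\tilde{\tau}(i,j,l)$ we already know that $a_{ij}^{(t)}=(\Tilde{C}\Tilde{S}^l\Tilde{R})_{ij}\oplus\Tilde{b}_{ij}^{(t)}$, so everything reduces to showing that the second summand is dominated by the first whenever $t\geq T(i,j,l)$.

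First I would invoke Lemma~\ref{l:BndsBt} for the matrix~$\Tilde{B}$, which gives the entrywise bound
$$\Tilde{b}_{ij}^{(t)}\leq\lambda^{\otimes t}(\Tilde{B})\otimes(\lambda^-(\Tilde{B})\otimes\Tilde{B})^*_{ij}.$$
The degenerate case $\lambda(\Tilde{B})=\bzero$ is handled separately: then $\Tilde{B}$ has no cycles and is therefore nilpotent, so $\Tilde{b}_{ij}^{(t)}=\bzero$ for $t\geq n$ and the desired domination is trivial; in the definition~\eqref{Tijl} one adopts the convention that this $t$ is the value of $T(i,j,l)$.

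Next I would observe that the right-hand side of the above display is non-increasing in~$t$. Indeed, $\Tilde{B}$ is subordinate to~$A$ and, by construction of the local reduction, its digraph avoids every s.c.c.\ of $\crit(A)$, so every cycle of $\digr(\Tilde{B})$ has mean weight strictly less than $\lambda(A)=\bunity=0$; hence $\lambda(\Tilde{B})\leq 0$ and $t\mapsto\lambda^{\otimes t}(\Tilde{B})=t\cdot\lambda(\Tilde{B})$ is non-increasing, while the Kleene-star factor is independent of~$t$. The definition~\eqref{Tijl} of $T(i,j,l)$ therefore guarantees
$$\lambda^{\otimes t}(\Tilde{B})\otimes(\lambda^-(\Tilde{B})\otimes\Tilde{B})^*_{ij}\leq(\Tilde{C}\Tilde{S}^l\Tilde{R})_{ij}\qquad\text{for every } t\geq T(i,j,l),$$
and combining with the previous bound yields $\Tilde{b}_{ij}^{(t)}\leq(\Tilde{C}\Tilde{S}^l\Tilde{R})_{ij}$ on this range.

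Substituting into the weak CSR expansion~\eqref{CSR+B_ij}, one concludes that $a_{ij}^{(t)}=(\Tilde{C}\Tilde{S}^l\Tilde{R})_{ij}$ for all $t\equiv l\pmod\gamma$ with $t\geq\max(\tilde{\tau}(i,j,l),T(i,j,l))$, which is precisely~\eqref{aijt} and hence gives the claimed bound on $\tau(i,j,l)$. The only delicate point in the argument is justifying the monotonicity step, which amounts to checking that $\lambda(\Tilde{B})\leq 0$; this is the main (though mild) obstacle, and it follows directly from the fact that the local reduction removes precisely those critical components that could otherwise contribute cycles of mean weight~$\bunity$ to~$\digr(\Tilde{B})$.
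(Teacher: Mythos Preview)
Your proof is correct and follows essentially the same approach as the paper: both reduce the claim to the inequality $\Tilde{b}_{ij}^{(t)}\leq\lambda^{\otimes t}(\Tilde{B})\otimes(\lambda^-(\Tilde{B})\otimes\Tilde{B})^*_{ij}$, which you obtain via Lemma~\ref{l:BndsBt} and the paper derives directly from the optimal walk interpretation (these are the same argument). You are actually more explicit than the paper in spelling out the monotonicity in~$t$ and the degenerate case $\lambda(\Tilde{B})=\bzero$; one small clarification is that $\digr(\Tilde{B})$ avoids $\crit(A)$ already because $\digr(\Tilde{B})\subseteq\digr(B)$ and $\digr(B)$ is disjoint from $\crit(A)$ by the definition of a weak CSR expansion, not specifically because of the local reduction step.
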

\begin{proof}
We only need to show that $\Tilde{b}_{ij}^{(t)}\leq \lambda^{\otimes
t}(\Tilde{B})\otimes (\lambda^-(\Tilde{B})\otimes\Tilde{B})^*_{ij}$.
Indeed, this follows
after dividing both parts of this inequality (in max-plus sense) by
$\lambda^{\otimes t}(\Tilde{B})$ and using the optimal walk
interpretation of $(\lambda^-(\Tilde{B})\otimes\Tilde{B})^t$ and
$(\lambda^-(\Tilde{B})\otimes\Tilde{B})^*$.
\end{proof}

\section{Acknowledgement}

We would like to thank Marianne Akian, Anne Bouillard, Peter Butkovi\v{c},
Bernadette Charron-Bost, Matthias F\"{u}gger, St\'{e}phane Gaubert, Rob Goverde,
Bernd Heidergott, Jean Mairesse, and Hans Schneider for many useful discussions, advice, and
inspiration.


\providecommand{\bysame}{\leavevmode\hbox
to3em{\hrulefill}\thinspace}
\providecommand{\MR}{\relax\ifhmode\unskip\space\fi MR }
\providecommand{\MRhref}[2]{%
  \href{http://www.ams.org/mathscinet-getitem?mr=#1}{#2}
} \providecommand{\href}[2]{#2}

\end{document}